\numberwithin{equation}{section}
\newtheorem{prop}[equation]{Proposition}
\newtheorem{lem}[equation]{Lemma}
\newtheorem{lemma}[equation]{Lemma}
\newtheorem{cor}[equation]{Corollary}
\newtheorem{thm}[equation]{Theorem}
\newtheorem{claim}[equation]{Claim}
\newtheorem*{thm1.3p}{Theorem 1.3'}
\newtheorem*{thm1.3pp}{Theorem 1.3''}
\newcommand{\me}{\mathrm{e}}
\newcommand{\eps}{\varepsilon}
\newcommand{\wtld}[1]{{\widetilde{#1}}}
\newcommand{\vphi}{\varphi}
\newcommand{\CC}{\mathbb{C}}
\newcommand{\Rb}{\mathbb{R}}
\newcommand{\Sb}{\mathbb{S}}  
\newcommand{\Tb}{\mathbb{T}}
\newcommand{\ZZ}{\mathbb{Z}}
\newcommand{\Al}{\mathcal{A}}
\newcommand{\Hl}{\mathcal{H}}
\newcommand{\DIV}{\mathrm{div}}
\newcommand{\Vol}{\mathrm{Vol}}
\newcommand{\grad}{\mathrm{grad}}
\newcommand{\dif}[1]{{\;d #1}}
\newcommand{\dx}{\;dx}
\newcommand{\Bhf}{\frac{1}{2}B}
\begin{document}
\title{The Volume of a Local Nodal Domain}
\author{Dan Mangoubi}
\date{}
\maketitle
\begin{abstract}
Let $M$ either be a closed real analytic Riemannian manifold or a
closed $C^\infty$-Riemannian surface. We estimate from below the
volume of a nodal domain component in an arbitrary ball, provided
that this component enters the ball deeply enough.
\end{abstract}
\section{Introduction}
\subsection{Main Results}
Let $(M, g)$ be a closed $C^\infty$-Riemannian manifold of dimension $n$.
Let $\Delta=-\DIV\circ\grad$ be the Laplace--Beltrami operator on $M$.
We consider the eigenvalue equation
\begin{equation}
  \Delta \vphi_\lambda = \lambda \vphi_\lambda
\end{equation}
For any $\lambda$-eigenfunction $\vphi_\lambda$
the null set $\{\vphi_\lambda=0\}$ is called the
$\vphi_\lambda$-\emph{nodal set}
and any connected component of the set $\{\vphi_\lambda\neq 0\}$
is called a $\lambda$-\emph{nodal domain}.

The Faber-Krahn Inequality (\cite{EgoKon}) shows that the volume of any
$\lambda$-nodal domain $\Al_\lambda$ is $\geq
C/(\sqrt{\lambda})^n$.
%
%
Donnelly and Fefferman initiated in~\cite{DonFef90} the study of a
local version of the Faber-Krahn inequality. Namely, they gave
a lower bound on the volume of \emph{local} nodal domains:
\begin{thm}[\cite{DonFef90, ChaMuc91, Lu93}]
 \label{thm:local-courant}
   Let $\vphi_\lambda$ be a $\lambda$-eigenfunction.
   Let $B$ be an arbitrary metric ball in $M$,
   and let $\Omega_\lambda$ be a connected component
   of $\{\vphi_\lambda\neq 0\}\cap B$.
   If $\Omega_\lambda\cap\Bhf\neq\emptyset$ then
     $$\frac{|\Omega_\lambda|}{|B|}\geq
     \frac{C_1}{(\sqrt{\lambda})^{\alpha(n)}(\log\lambda)^{4n}}\ ,$$
   where $\alpha(n)=4n^2+n/2$.
  \end{thm}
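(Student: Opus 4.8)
The plan is to reduce to a normalized local problem and then play the classical Faber--Krahn inequality off against the Donnelly--Fefferman growth estimate for eigenfunctions. Write $u=\vphi_\lambda$ and $B=B(p,R)$. Rescaling the metric by $R^{-2}$ turns $B$ into a unit ball $B_1$ in a manifold of uniformly bounded geometry and the eigenvalue equation into $\Delta u=\Lambda u$ with $\Lambda=\lambda R^{2}\le C(M)\lambda$ (since $R\le\mathrm{diam}\,M$); normalize $\sup_{B_1}|u|=1$ and fix a point $x_0\in\Omega\cap\tfrac12 B_1$ in the given component. The two facts I take from Donnelly--Fefferman are the doubling estimate $\sup_{B(x,2r)}|u|\le e^{C\sqrt\Lambda}\sup_{B(x,r)}|u|$ for $B(x,2r)\subseteq B_1$, and its companion that $u$ behaves in $B_{3/4}$ like a function of ``frequency'' $\lesssim\sqrt\Lambda$ (quantitative unique continuation). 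It then suffices to prove $|\Omega\cap B_1|\ge c\,\Lambda^{-\alpha(n)}(\log(e+\Lambda))^{-4n}$ and unrescale, using $\Lambda\le C\lambda$.

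The model case is a full nodal domain $\Omega$: there $u|_\Omega\in H^1_0(\Omega)$ has Rayleigh quotient exactly $\Lambda$, so $\lambda_1(\Omega)\le\Lambda$ and Faber--Krahn gives $|\Omega|\ge c(n)\Lambda^{-n/2}$ at once. For our component of $\{u\neq0\}\cap B_1$ the obstruction is that $u$ need not vanish on $\partial\Omega\cap\partial B_1$. To handle this I would extend $u|_\Omega$ by zero to $\tilde u\in H^1_0(\Omega)$ and cut off with $\chi$, where $\chi\equiv1$ on $B_{1/2}$ and $\supp\chi\subseteq B_{3/4}$: then $\chi\tilde u\in H^1_0(B_{3/4})$ is supported in $\Omega\cap B_{3/4}$, is nonzero (it agrees with $u$ near $x_0$), and integrating by parts using $\Delta u=\Lambda u$ on $\Omega$ --- the boundary terms vanish, on the nodal part because $u=0$ and near $\partial B_1$ because $\chi=0$ --- gives
$$\int|\nabla(\chi\tilde u)|^2=\Lambda\int\chi^2\tilde u^2+\int\tilde u^2|\nabla\chi|^2 .$$
Hence, provided the $L^2$-mass of $u|_\Omega$ in the annulus $B_{3/4}\setminus B_{1/2}$ is at most $\Lambda$ times its mass in $B_{1/2}$, the Rayleigh quotient of $\chi\tilde u$ is $\le 2\Lambda$, and Faber--Krahn gives $|\Omega\cap B_{3/4}|\ge c(n)\Lambda^{-n/2}$ --- even stronger than needed.

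The delicate case --- and the source of the large exponent --- is the opposite one, where $u|_\Omega$ is strongly suppressed near $x_0$ relative to what $u$ does near $\partial B_1$. Here I would combine the cutoff estimate with a De~Giorgi-type growth lemma: if a one-signed solution of $\Delta u=\Lambda u$ vanishes on $\partial\Omega\cap B(x_0,\rho)$ and $\Omega$ occupies only a small fraction of $B(x_0,\rho)$, then $\sup_{\Omega\cap B(x_0,\rho/2)}|u|$ is a definite fraction of $\sup_{\Omega\cap B(x_0,\rho)}|u|$ --- and at sub-wavelength scales $\rho\lesssim\Lambda^{-1/2}$ that fraction is an absolute constant. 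Iterating this from scale $\Lambda^{-1/2}$ down to the largest scale $\rho_0$ at which $\Omega$ already fills a definite portion of $B(x_0,\rho_0)$ yields a power decay $\sup_{\Omega\cap B(x_0,\rho_0)}|u|\lesssim(\sqrt\Lambda\,\rho_0)^{\kappa}$; since this sup is $\ge|u(x_0)|$ it forces $\rho_0$ to be bounded below in terms of $|u(x_0)|$, whence $|\Omega\cap B_1|\gtrsim\rho_0^{\,n}$ is bounded below. A lower bound on $|u(x_0)|$ itself --- not automatic, since $\sup_{B_1}|u|$ may be attained away from $\Omega$ --- is extracted separately from the doubling estimate; this is where $\sqrt\Lambda$ enters the exponent. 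Tracking how the number of scales trades off against the (dimension-dependent) loss at each scale is what produces precisely $\alpha(n)=4n^2+n/2$ and the factor $(\log\Lambda)^{4n}$; unrescaling then gives the theorem.

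I expect the last step to be the main obstacle: one must (a) run the growth lemma only at scales where its constant does not degenerate, so that the accumulated loss stays polynomial rather than exponential in $\Lambda$; (b) absorb the nuisance that $u$ need not vanish on the spheres $\partial B(x_0,\rho)$, which is where a Carleman or frequency-function argument enters; and (c) obtain the base-point lower bound on $|u(x_0)|$ with an explicit $\Lambda$-dependence. By contrast the rescaling, the integration by parts above, and the appeals to Faber--Krahn are routine. (Chanillo--Muckenhoupt and Lu give alternative treatments in which the iteration is replaced by weighted Hardy-type inequalities.)
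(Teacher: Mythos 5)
First, a point of order: the paper does not prove Theorem~\ref{thm:local-courant} at all --- it is quoted from Donnelly--Fefferman, Chanillo--Muckenhoupt and Lu, and the paper's own effort goes into the sharper Theorem~\ref{thm:local-courant-analytic}. So your attempt can only be judged against the architecture of those arguments, which the paper's proof of Theorem~\ref{thm:local-courant-analytic}$''$ closely mirrors. Your sketch does assemble several of the right ingredients: Faber--Krahn for interior components, the cutoff/Caccioppoli computation for components with controlled annulus mass, the Donnelly--Fefferman doubling bound, and a Landis-type growth lemma iterated over scales (this is precisely Lemma~\ref{lem:growth-lemma} and Theorem~\ref{thm:rapid-growth}).

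The genuine gap is that your argument never closes, because it lacks the \emph{propagation of smallness} step from the set $\Omega\cap B_{r_0}$ to a full concentric ball. The growth-lemma iteration yields that $\sup_{\Omega\cap B_{r_0}}|u|$ is a high power of $r_0$ times $\sup_{\Omega}|u|$; to contradict the doubling bound $\beta\lesssim\sqrt{\Lambda}$ one must convert smallness of $|u|$ on the \emph{subset} $\Omega\cap B_{r_0}$ into smallness of $|u|$ on the whole ball $B_{r_0}$ (or $B_{\rho_0}$). In Donnelly--Fefferman this is their Carleman-type inequality (D); in the present paper it is Theorem~\ref{thm:nad-had}. Your proposed substitute --- a lower bound on $|u(x_0)|$ ``extracted from the doubling estimate'' --- cannot work: $x_0$ is an arbitrary point of $\Omega\cap B_{1/2}$ and may lie arbitrarily close to the nodal set, so $|u(x_0)|$ admits no lower bound; if you instead take $\sup_{\Omega\cap B_{1/2}}|u|$, a lower bound on that quantity is exactly what is at stake (the rapid-growth lemma says it \emph{is} small when $\Omega$ is narrow), and doubling only controls suprema over full balls, where the supremum may be attained in a different nodal component. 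You mention Carleman estimates only as curing the ``nuisance'' that $u$ need not vanish on spheres, which is not their role here. Finally, the exponents $\alpha(n)=4n^2+n/2$ and $(\log\lambda)^{4n}$ are asserted to emerge from ``tracking the scales'' but are never derived; they come from the quantitative constants in the Carleman and covering estimates of the cited papers, not from the bookkeeping you describe. The items (a)--(c) you flag as remaining obstacles are not technicalities; they are the substance of the proof.
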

\noindent\textbf{Notations.} In the above theorem and throughout
the paper $rB$ denotes a concentric ball whose radius is $r$ times
that of $B$. $C_1, C_2, \ldots$ denote constants which depend only
on the metric $g$. The enumeration of constants is different in
each section.\vspace{1Ex}

Donnelly and Fefferman note that the estimates above are clearly
non-sharp and they conjecture sharp estimates. The aim of the
present paper is to give practically sharp estimates in the case
of real analytic metrics and in the case of $C^{\infty}$-surfaces.
We prove:
\begin{thm}
 \label{thm:local-courant-analytic}
   Let $(M, g)$ be a closed real analytic Riemannian manifold.
   Let $\vphi_\lambda$ be as above. Let $B\subseteq M$ be an
   arbitrary metric ball of radius $R$,
   and let $\Omega_\lambda$ be a connected component of
   $\{\vphi_\lambda\neq 0\}\cap B$. If 
$\Omega_\lambda\cap\Bhf\neq\emptyset$ then
     $$\frac{|\Omega_\lambda|}{|B|}\geq
     \frac{C_2}{(\sqrt{\lambda})^{2n-2}R'(\log\lambda)^{n-1}}\ ,$$
   where $R' =\max\{R, 1/\sqrt{\lambda}\}$.
\end{thm}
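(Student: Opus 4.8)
The plan is to localize the problem to the wavelength scale $1/\sqrt\lambda$ and to propagate a single scale‑$(1/\sqrt\lambda)$ volume estimate along $\Omega_\lambda$ by a ball‑packing argument, disposing of the degenerate case by the Faber--Krahn inequality quoted above. The scale‑$(1/\sqrt\lambda)$ input I would isolate as a Key Lemma: there are $\rho_0,C_3>0$ depending only on $g$ so that whenever $B'\subseteq M$ is a ball of radius $r\le\rho_0/\sqrt\lambda$ with $\vphi_\lambda\ne0$ at its center $x'$, the connected component $\omega'$ of $\{\vphi_\lambda\ne0\}\cap B'$ through $x'$ satisfies $|\omega'|/|B'|\ge C_3/(\sqrt\lambda\log\lambda)^{n-1}$. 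This is the only place where real‑analyticity enters. To prove it I would complexify $\vphi_\lambda$ on a fixed complex neighbourhood of $M$ and invoke the Donnelly--Fefferman doubling bound $\sup_{2B'}|\vphi_\lambda|\le\me^{C\sqrt\lambda}\sup_{B'}|\vphi_\lambda|$ (which holds, uniformly in the center, for all radii below a fixed $r_0$); after rescaling $B'$ to the unit ball this makes $\vphi_\lambda$ behave, as far as its nodal structure is concerned, like a polynomial of degree $N\le C\sqrt\lambda$, and the matter reduces to the real‑variable assertion that the nodal component through a given point of such a function occupies a fraction $\gtrsim(N\log N)^{-(n-1)}$ of the ball. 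Getting the power $n-1$ right — it is forced by the extremal ``cone'' configurations modelled on $\mathrm{Re}\,(x_1+\mi x_2)^N$, whose positive components are cones of $(n-1)$‑dimensional aperture $\sim 1/N$, so that one must control both an inradius and the length of such a cone — is the crux of the paper and the step I expect to be hardest.

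Granting the Key Lemma, the theorem follows quickly. Since $M$ is compact, $R\le R_0:=\mathrm{diam}(M)$. If $R\le\rho_0/\sqrt\lambda$, fix $x_0\in\Omega_\lambda\cap\Bhf$; then $\vphi_\lambda(x_0)\ne0$, and $B':=B(x_0,R/2)\subseteq B$ has radius $\le\rho_0/\sqrt\lambda$, so the Key Lemma supplies a component $\omega'$ of $\{\vphi_\lambda\ne0\}\cap B'$ through $x_0$ with $|\omega'|\ge C_3|B'|/(\sqrt\lambda\log\lambda)^{n-1}$; as $\omega'\subseteq\Omega_\lambda$ (it is connected, avoids the nodal set, lies in $B$, and meets $\Omega_\lambda$ at $x_0$) and $|B'|\asymp|B|$, we get $|\Omega_\lambda|/|B|\gtrsim(\sqrt\lambda\log\lambda)^{-(n-1)}$, which majorizes the claimed bound because $R'\asymp1/\sqrt\lambda$ and $n\ge2$. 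So assume $R>\rho_0/\sqrt\lambda$. If $\overline{\Omega_\lambda}\subseteq B$ then $\bdry\Omega_\lambda\subseteq\{\vphi_\lambda=0\}$, so $\Omega_\lambda$ is a genuine $\lambda$‑nodal domain and Faber--Krahn gives $|\Omega_\lambda|\ge C/(\sqrt\lambda)^n$; using $|B|\asymp R^n\le R_0^n$ and $R'=R$, the desired inequality reduces to $(\sqrt\lambda)^{n-2}(\log\lambda)^{n-1}\gtrsim R^{n-1}$, whose left side tends to $\infty$ with $\lambda$, so it holds for all large $\lambda$, the finitely many remaining eigenvalues being absorbed into $C_2$.

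It remains to handle $R>\rho_0/\sqrt\lambda$ with $\overline{\Omega_\lambda}\cap\bdry B\ne\emptyset$. Write $d(x)=\dist(x,\text{center of }B)$. On the connected set $\Omega_\lambda$, $d$ is $\le R/2$ at some point of $\Bhf$ and has supremum $R$, hence attains every value in $[R/2,R)$. Put $r=\rho_0/(2\sqrt\lambda)$, $m=\lfloor(R/2-2r)/(2r)\rfloor\asymp R\sqrt\lambda$, and pick $p_j\in\Omega_\lambda$ with $d(p_j)=R/2+2jr$ for $0\le j\le m$. For $i\ne j$ one has $\dist(p_i,p_j)\ge|d(p_i)-d(p_j)|\ge2r$, so the balls $B_j:=B(p_j,r)$ are pairwise disjoint, and $B_j\subseteq B$ by the choice of $m$. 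Applying the Key Lemma to each $B_j$ (centered at $p_j$, where $\vphi_\lambda\ne0$) yields the component $\omega_j$ of $\{\vphi_\lambda\ne0\}\cap B_j$ through $p_j$, with $|\omega_j|\ge C_3|B_j|/(\sqrt\lambda\log\lambda)^{n-1}\asymp(\sqrt\lambda)^{-n}(\sqrt\lambda\log\lambda)^{-(n-1)}$. Each $\omega_j\subseteq\Omega_\lambda$ as before, and the $\omega_j$ are disjoint since the $B_j$ are, so
$$|\Omega_\lambda|\ \ge\ \sum_{j=0}^{m}|\omega_j|\ \gtrsim\ R\sqrt\lambda\cdot\frac{1}{(\sqrt\lambda)^{n}(\sqrt\lambda\log\lambda)^{n-1}}\ =\ \frac{R}{(\sqrt\lambda)^{2n-2}(\log\lambda)^{n-1}}\ .$$
Dividing by $|B|\asymp R^n$ and using $R\le R_0$ to absorb $R^{n-1}$ into the constant gives $|\Omega_\lambda|/|B|\gtrsim(\sqrt\lambda)^{-(2n-2)}R^{-1}(\log\lambda)^{-(n-1)}$; since $R'=R$ in this regime, this is exactly Theorem~\ref{thm:local-courant-analytic}.
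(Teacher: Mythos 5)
Your outer scaffolding is essentially the paper's: the reduction to a single wavelength-scale statement, the Faber--Krahn disposal of components with $\overline{\Omega_\lambda}\subseteq B$, and the packing of $\sim R\sqrt{\lambda}$ disjoint wavelength balls along a boundary-touching component (the paper uses spherical layers of width $\sqrt{\eps_0/\lambda}$ in $B\setminus\Bhf$; your choice of points $p_j$ along the level sets of $d$ is an equivalent implementation, modulo the harmless boundary case $\rho_0/\sqrt{\lambda}<R\lesssim\rho_0/\sqrt{\lambda}$ where $m<0$ and you should just reuse the small-ball case). The arithmetic in all three regimes checks out.

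The genuine gap is the Key Lemma itself, which is not a lemma you can ``grant'': it is Theorem~\ref{thm:local-courant-analytic}$'$, i.e.\ the entire content of the paper. You correctly identify it as the crux and as the only place real-analyticity enters, but you do not prove it, and the one-sentence strategy you offer --- complexify, invoke the Donnelly--Fefferman doubling exponent $N\leq C\sqrt{\lambda}$, and then appeal to ``the real-variable assertion that the nodal component through a given point of such a function occupies a fraction $\gtrsim (N\log N)^{-(n-1)}$ of the ball'' --- is circular: that assertion \emph{is} the Key Lemma restated for polynomial-like functions, and no known result reduces it further. The paper's actual mechanism is different and has two moving parts beyond the growth bound~(\ref{DF-growth}): (i) an iterated growth lemma in narrow domains (Theorem~\ref{thm:rapid-growth}, proved via the local maximum principle of Gilbarg--Trudinger applied to a convex truncation of $\vphi$), which shows that if $|\Omega\cap B_r|/|B_r|$ is small for all $r_0<r<1$ then $\vphi$ is exponentially small on $E=\Omega\cap B_{r_0}$ relative to $\sup_\Omega\vphi$; and (ii) Nadirashvili's generalization of Hadamard's three-circles theorem (Theorem~\ref{thm:nad-had}), which propagates this smallness from $E$ --- chosen via the stopping radius $r_0$ in~(\ref{r0-definition}) precisely so that $E$ has large density in $B_{r_0}$ --- to the full ball $B_{\rho_0}$, contradicting~(\ref{DF-growth}) unless $|\Omega|/|B_1|\gtrsim(\eps/\beta)^{(n-1)/(1-\eps)}$; the $\log$ factor then comes from optimizing over $\eps$. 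Without an argument supplying both the narrowness-implies-growth step and the propagation-of-smallness step (the latter being where real-analyticity is genuinely used, via interior analytic regularity), the proposal does not establish the theorem.
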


In particular, for $R<1/\sqrt{\lambda}$, we have
\begin{equation}
\label{local-courant-small} |\Omega_\lambda|/|B| \geq
C_2(\sqrt{\lambda})^{-(n-1)} (\log\lambda)^{-(n-1)}\ .
\end{equation}

To better understand Theorem~\ref{thm:local-courant-analytic} consider a harmonic function
$\vphi$ defined in the ball of radius $2$, $B_2\subseteq\Rb^n$.
Assume $\vphi(0)=0$, and suppose also that its growth $\beta$ is given
by $$\beta:= \log\frac{\sup_{B_1}|\vphi|}{\sup_{B_{1/2}}|\vphi|}\ .$$
We consider the positive and negative components of $\vphi$ in the unit ball
$B_1$. Take one positive component $\Omega$ of $\{\vphi>0\}\cap B_1$.
We show that if $\Omega\cap B_{1/2}\neq\emptyset$ then
\begin{equation}
\label{harmonic-vol}
  \Vol(\Omega)\geq \frac{C}{\beta^{n-1}}\ .
\end{equation}
In fact we prove a similar estimate for a small perturbation of harmonic functions,
and use a well known scaling argument to pass to the estimate for eigenfunctions
on the wavelegth $(1/\sqrt{\lambda})$ scale.

One of our main motivation to prove the estimate~(\ref{harmonic-vol})
besides the interest raised by Donnelly and Fefferman is the following
result in~\cite{NazPolSod05}:
\begin{thm}
\label{thm:naz-pol-sod05}
Under the assumptions above in dimension $n=2$
$$\Vol(\vphi>0) \geq \frac{C}{\log \beta}\ .$$
\end{thm}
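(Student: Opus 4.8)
The plan is to pass to plane complex analysis and then to combine a Schwarz‑type lemma at the centre with a Carleman--Milloux (harmonic measure) estimate for the thin positive domain. After rescaling assume $\sup_{B_1}|\vphi|=1$, so that by hypothesis $\sup_{B_{1/2}}|\vphi|=\me^{-\beta}$. Since $B_2\subseteq\Rb^2$ is simply connected, write $\vphi=\operatorname{Re}F$ with $F$ holomorphic in $B_2$; subtracting a purely imaginary constant we may take $F(0)=0$, so that the positive set $U:=\{\vphi>0\}\cap B_1$ is exactly $F^{-1}(\{\operatorname{Re}>0\})$. Two elementary facts will be used. First, a Schwarz‑type lemma: if $h$ is harmonic in $B_\rho$ with $h(0)=0$ and $\sup_{B_\rho}h>0$, then $\sup_{B_{\rho/2}}|h|\le C_0\sup_{B_\rho}h$; this is the Harnack inequality applied to the nonnegative harmonic function $\sup_{B_\rho}h-h$, and it applies to $h=\pm\vphi$. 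Second, monotonicity of the Almgren frequency of $\vphi$ about $0$, which gives that the logarithmic doubling of $|\vphi|$ between two concentric balls of radii $\le 1/2$ is at most $C\beta$.

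For $r\in(0,1)$ let $\theta(r)$ be the angular measure of $U\cap\{|z|=r\}$, so that $|U|=\int_0^1\theta(r)\,r\,dr$. Apply the Carleman--Milloux estimate to the nonnegative harmonic function $u=\vphi/\sup_{B_1}\vphi$ on the domain $U\subseteq B_1$: it satisfies $u\le 1$ and $u=0$ on the part of $\partial U$ lying in $\{|z|<1\}$ (the nodal part of $\partial U$), so propagating inward from the outer circle yields, for every $z\in U$,
$$u(z)\ \le\ \exp\!\Big(-\pi\int_{|z|}^{1}\frac{dr}{r\,\theta(r)}\Big)\ .$$
Taking the supremum over $z\in U$ with $|z|\le\tfrac12$ gives $\sup_{B_{1/2}}\vphi\le\sup_{B_1}\vphi\cdot\exp\big(-\pi\int_{1/2}^{1}dr/(r\theta(r))\big)$. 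On the other hand the Schwarz‑type lemma (applied to $\vphi$ at scale $\tfrac12$) together with the frequency bound give $\sup_{B_{1/2}}\vphi\ge C_0^{-1}\sup_{B_{1/4}}|\vphi|\ge c\,\me^{-C\beta}$, while $\sup_{B_1}\vphi\le 1$. Hence $\int_{1/2}^{1}dr/(r\theta(r))\le C'\beta$, and by Cauchy--Schwarz
$$\tfrac14=\Big(\int_{1/2}^{1}dr\Big)^{2}\le\int_{1/2}^{1}\theta(r)\,r\,dr\cdot\int_{1/2}^{1}\frac{dr}{r\,\theta(r)}\ ,$$
which already yields $|U|\ge c/\beta$.

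The main obstacle is that this comparison is exponentially wasteful: the bound $\sup_{B_{1/4}}|\vphi|\ge c\,\me^{-C\beta}$ is attained, e.g. by $\operatorname{Re}(z^{\beta})$, and without improving it one cannot do better than $1/\beta$. To reach $1/\log\beta$ I would run a dichotomy at each dyadic scale. If $\sup_{B_{1/2}}\vphi\ge\beta^{-A}\sup_{B_1}\vphi$ for a suitable absolute constant $A$, then the computation above improves to $\int_{1/2}^{1}dr/(r\theta(r))\le C''\log\beta$, and Cauchy--Schwarz gives $|U|\gtrsim 1/\log\beta$ from the single annulus $\{\tfrac12<|z|<1\}$. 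If instead $\sup_{B_r}\vphi$ drops super‑polynomially from $r=1$ to $r=\tfrac12$, one rescales to the ball $B_{1/2}$ — which costs only a factor $\tfrac14$ in the volume, while the growth exponent does not increase — and iterates. The crux, which I expect to be the hard part, is to show that the super‑polynomial‑drop alternative cannot persist through many scales: a long run of such drops pins the Almgren frequency of $\vphi$ near $\beta$ over the whole range of scales involved, and a harmonic function of such high frequency at every scale cannot keep the sign changes of its restriction to all the corresponding circles clustered; one must then argue directly that $\theta(r)\gtrsim 1$ on a substantial set of radii, so that $|U|$ is bounded below by a constant in that regime as well. Making this last step rigorous — controlling the scale at which $\vphi$ becomes dominated by its leading homogeneous term, so that its positive set fills essentially half of the corresponding disc — is the main difficulty. (Alternatively, in two dimensions the whole estimate can be packaged through the map $\log F$, which carries each component of $U$ conformally into the strip $\{|\operatorname{Im}|<\pi/2\}$, replacing Carleman--Milloux by the Ahlfors distortion theorem.)
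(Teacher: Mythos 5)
First, note that the paper does not prove this statement: Theorem~\ref{thm:naz-pol-sod05} is quoted from~\cite{NazPolSod05} purely as motivation (the text explicitly expresses the \emph{hope} that the single-component bound~(\ref{harmonic-vol}) might eventually yield a real-analysis proof of it). So there is no in-paper argument to compare yours against, and your attempt must stand on its own.

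It does not. The part of your argument that is actually carried out --- Carleman's harmonic-measure estimate on $U=\{\vphi>0\}\cap B_1$, the Harnack/doubling lower bound $\sup_{B_{1/2}}\vphi\geq c\,\me^{-C\beta}$, and Cauchy--Schwarz --- is essentially sound (modulo the usual care with the range of integration in Carleman's inequality) but only yields $\Vol(\vphi>0)\geq c/\beta$. That bound is not new content: it already follows from the single-component estimate of Theorem~\ref{thm:dim2-harmonic}. The entire substance of the theorem is the improvement from $1/\beta$ to $1/\log\beta$, and that is precisely the step you flag as "the main difficulty" and do not supply. Moreover, the dichotomy you propose cannot close as designed. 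The "bad" branch --- $\sup_{B_{1/2}}\vphi$ dropping faster than any power of $\beta$ --- is not an exceptional regime: for $\vphi=\operatorname{Re}(z^{N})$ with $N\sim\beta$ one has $\sup_{B_{r/2}}\vphi/\sup_{B_r}\vphi=2^{-N}=\me^{-c\beta}$ at \emph{every} scale, so the drop can persist through $\sim\beta/\log\beta$ dyadic steps; since each rescaling costs a factor $1/4$ in area, the iteration produces nothing. The fallback claim that a frequency pinned near $\beta$ across many scales forces $\theta(r)\gtrsim 1$ on a substantial set of radii is exactly the hard assertion one must prove (it is, in effect, the content of the complex-analytic argument in~\cite{NazPolSod05}, which controls the total positivity set globally rather than circle by circle), and nothing in your sketch makes it available as a lemma. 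As it stands the proposal proves a strictly weaker statement and leaves the actual theorem open.
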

Theorem~\ref{thm:naz-pol-sod05} was proved by using complex-analysis methods,
and we hope that estimate~(\ref{harmonic-vol}) on each positive component separately
may lead to a real analysis proof of Theorem~\ref{thm:naz-pol-sod05}.

We show by a series of examples on the $n$-dimensional round
sphere $\Sb^n$, that estimate~(\ref{local-courant-small}) is sharp
up to the $(\log\lambda)^{n-1}$ factor:
\begin{thm}
Consider the standard round sphere $\Sb^n$.
For every eigenvalue $\lambda$ and $R<1/\sqrt{\lambda}$
there exists an eigenfunction $\vphi_\lambda$ on $\Sb^n$,
a nodal domain $\Al_\lambda$ and a ball $B$ of radius $R$
such that
$$\frac{|\Al_\lambda\cap B|}{|B|} \leq
\frac{C_3(n)}{(\sqrt{\lambda})^{n-1}}\ ,$$
and $\Al_\lambda\cap \Bhf\neq\emptyset$.
\end{thm}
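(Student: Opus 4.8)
The plan is to exhibit, for each $k$ with $\lambda=k(k+n-1)$, an eigenfunction on $\Sb^n$ whose nodal set is \emph{exactly} a metric cone near one point, so that the ratio in question can be computed with no error terms. Fix $N\in\Sb^n$ and use geodesic polar coordinates $(t,\xi)\in[0,\pi]\times\Sb^{n-1}$ about $N$, in which the round metric is $dt^2+\sin^2 t\,g_{\Sb^{n-1}}$. I claim that for \emph{any} degree-$k$ spherical harmonic $Z$ on $\Sb^{n-1}$ the function
$$\vphi_\lambda(t,\xi):=(\sin t)^k\,Z(\xi)$$
is a degree-$k$ spherical harmonic on $\Sb^n$, i.e.\ $\Delta\vphi_\lambda=k(k+n-1)\vphi_\lambda$; this is a short separation-of-variables computation using $\Delta_{\Sb^{n-1}}Z=k(k+n-2)Z$, and it is the standard recursion for spherical harmonics under $\Sb^n=\Sb^{n-1}\ast\Sb^{0}$. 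Writing $Z=p|_{\Sb^{n-1}}$ with $p$ a homogeneous harmonic polynomial of degree $k$ on $\Rb^n$, one has $\vphi_\lambda=\big(\tfrac{\sin|x|}{|x|}\big)^{k}p(x)$ in normal coordinates near $N$, so $\vphi_\lambda$ is smooth (even analytic) and its nodal set near $N$ is exactly the nodal cone of $p$. Hence, if $\Omega\subseteq\Sb^{n-1}$ is a nodal domain of $Z$, then $\Al_\lambda:=\{(t,\xi):0<t<\pi,\ \xi\in\Omega\}$ is a nodal domain of $\vphi_\lambda$ — a metric cone over $\Omega$ with vertex $N$ — and for every $R\in(0,\pi)$, taking $B:=B(N,R)$,
$$\frac{|\Al_\lambda\cap B|}{|B|}=\frac{\big(\int_0^R(\sin t)^{n-1}\,dt\big)\,|\Omega|_{\Sb^{n-1}}}{\big(\int_0^R(\sin t)^{n-1}\,dt\big)\,|\Sb^{n-1}|}=\frac{|\Omega|_{\Sb^{n-1}}}{|\Sb^{n-1}|},\qquad \Al_\lambda\cap\tfrac12B\neq\emptyset .$$
(The hypothesis $R<1/\sqrt\lambda$ is not even needed; the construction works for all $R\in(0,\pi)$.)

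Thus the whole theorem reduces to a statement one dimension down: there is a degree-$k$ spherical harmonic $Z$ on $\Sb^{n-1}$ with a nodal domain of relative measure $|\Omega|/|\Sb^{n-1}|\le A_n\,k^{-(n-1)}$. Since $k\le\sqrt\lambda\le\sqrt n\,k$, this gives $|\Al_\lambda\cap B|/|B|\le C_3(n)(\sqrt\lambda)^{-(n-1)}$ (the case $n=1$ is trivial, and for $n\ge2$ and bounded $k$ one enlarges $C_3(n)$, so one may assume $k$ large). I would prove the reduced statement by inducting on the dimension $m$: for $m=1$ take $Z=\cos(k\phi)$, whose nodal domains are arcs of length $\pi/k$. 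For the step $m-1\to m$, set $\ell=\lfloor k/2\rfloor$, pick by induction a degree-$\ell$ harmonic $Z'$ on $\Sb^{m-1}$ with a nodal domain $\Omega'$ of relative measure $\le A_{m-1}\ell^{-(m-1)}$, and on $\Sb^m$ (polar coordinates $(t,\xi)\in[0,\pi]\times\Sb^{m-1}$) put
$$Z(t,\xi):=(\sin t)^{\ell}\,C^{(\ell+(m-1)/2)}_{k-\ell}(\cos t)\,Z'(\xi),$$
a degree-$k$ harmonic on $\Sb^m$ (the Gegenbauer factor being the solution of the ultraspherical ODE regular at $t=0,\pi$). The orthogonal polynomial $C^{(\nu)}_{k-\ell}$ with $\nu=\ell+(m-1)/2>0$ has $k-\ell$ simple zeros in $(-1,1)$, so $C^{(\nu)}_{k-\ell}(\cos t)$ has $k-\ell$ zeros in $(0,\pi)$; by pigeonhole two consecutive ones satisfy $t_{i+1}-t_i\le\pi/(k-\ell)$. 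Then $\{t_i<t<t_{i+1},\ \xi\in\Omega'\}$ is a nodal domain of $Z$, and since $\int_{t_i}^{t_{i+1}}(\sin t)^{m-1}\,dt\le\pi/(k-\ell)$ its relative measure is at most $\frac{\pi}{(k-\ell)\kappa_m}\cdot\frac{|\Omega'|}{|\Sb^{m-1}|}$, $\kappa_m:=\int_0^\pi(\sin t)^{m-1}\,dt$; as $k-\ell\ge k/3$ this closes the induction with $A_m$ depending only on $m$.

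I expect the care to be needed in two places, both of bookkeeping rather than conceptual nature. First, verifying that $(\sin t)^{\ell}C^{(\nu)}_{k-\ell}(\cos t)Z'(\xi)$ — in particular $(\sin t)^kZ(\xi)$ — really is a smooth degree-$k$ eigenfunction on the larger sphere: this is the ODE/separation-of-variables computation (or a precise citation to the classical structure of spherical harmonics), and smoothness across the poles $t=0,\pi$ must be checked. Second, justifying that each $\{t_i<t<t_{i+1},\ \xi\in\Omega'\}$ (and likewise each $\Al_\lambda$) is \emph{exactly} a connected component of $\{Z\neq0\}$: on such a set $Z$ has constant sign while its boundary lies in $\{t=t_i\}\cup\{t=t_{i+1}\}\cup(\{t_i<t<t_{i+1}\}\times\partial\Omega')\subseteq\{Z=0\}$, but the coordinate degeneracy at the poles must be handled — near $N$, $Z$ equals a nonvanishing smooth factor times the solid harmonic $\|x\|^{\ell}Z'(x/\|x\|)$, whose sign components near the origin are precisely the cones over the nodal domains of $Z'$, so no merging occurs. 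Everything else is an exact computation; the conceptual point — and the reason the bound is sharp up to the $(\log\lambda)^{n-1}$ factor — is simply that suspending a harmonic from $\Sb^{n-1}$ to $\Sb^n$ turns a nodal domain into a metric cone whose vertex can be taken as the centre of $B$, so the volume ratio is evaluated with no error at all.
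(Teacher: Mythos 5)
Your proposal is correct, and it shares the paper's core engine — the inductive construction of spherical harmonics as (associated Legendre/Gegenbauer factor in $\cos\theta_1$) $\times$ (lower-dimensional harmonic), with the degree split $j=\lfloor k/2\rfloor$ — but it extracts the small nodal domain differently. The paper (Proposition \ref{prop:max-nodal-domains}) counts: it produces $\gtrsim k^{n-1}$ disjoint nodal domains all meeting the north pole and then pigeonholes inside the ball $B$ to find one of relative volume $\lesssim k^{-(n-1)}$. You instead carry an explicit volume bound through the induction (a nodal domain of relative measure $\lesssim k^{-(n-1)}$ on $\Sb^{n-1}$) and add a top layer $(\sin t)^kZ(\xi)$ — the restriction of a degree-$k$ harmonic polynomial on $\Rb^n$ extended trivially to $\Rb^{n+1}$, i.e.\ the degenerate case $j=k$ of the paper's family $H^n_{k,j}$ — so that the chosen nodal domain is an exact metric cone with vertex at the centre of $B$ and the ratio $|\Al_\lambda\cap B|/|B|$ equals $|\Omega|/|\Sb^{n-1}|$ with no error. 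This buys you two things the paper's argument does not give: the bound holds for every $R\in(0,\pi)$ rather than only $R<1/\sqrt{\lambda}$, and no pigeonhole or counting of nodal domains is needed. The points you flag as needing care (smoothness of the suspended harmonic across the poles, identification of the product sets as genuine connected components of $\{Z\neq0\}$, the standard zero count for the Gegenbauer factor) are exactly the right ones and are all routine; the paper handles the analogous facts by citation to Groemer. One cosmetic slip: with $k-\ell$ zeros in $(0,\pi)$ the pigeonhole on consecutive zeros gives a gap $\le\pi/(k-\ell-1)$, not $\pi/(k-\ell)$, which of course changes nothing.
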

For $R \sim 1$,
we have
\begin{equation}
\label{frm-large-balls}
|\Omega_\lambda|/|B| \geq C_2(\sqrt{\lambda})^{-(2n-2)}
(\log\lambda)^{-(n-1)} \ .
\end{equation}
An example (see Section~\ref{sec:example-tn})
on a flat torus suggests that the power $2n-2$
in~(\ref{frm-large-balls}) could be improved to $n-1$. However, we
believe that the large ball behavior should depend on the dynamics
of the manifold, and it would be interesting to prove a result 
in this direction. 

We also consider in this paper $C^{\infty}$-surfaces. In this case, we use the
methods from~\cite{NazPolSod05} in order to reduce the analysis to
the case of flat metrics. Then, we can apply complex analysis. We
prove
\begin{thm}
 \label{thm:local-courant-dim-2}
   Let $(\Sigma, g)$ be a closed $C^{\infty}$-Riemannian surface.
     Then
     $$\frac{|\Omega_\lambda|}{|B|}\geq
     \frac{C_4}
{\lambda R' (\log\lambda)^{1/2}}\ ,$$ where $\lambda,
\vphi_\lambda, B, R, R', \Omega_\lambda$ are as in
Theorem~\ref{thm:local-courant-analytic}.
\end{thm}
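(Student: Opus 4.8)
The plan is to reduce Theorem~\ref{thm:local-courant-dim-2} to Theorem~\ref{thm:local-courant-analytic} (more precisely, to its underlying harmonic estimate~(\ref{harmonic-vol})) by a conformal change of metric, following the scheme of~\cite{NazPolSod05}. On a $C^\infty$-surface $\Sigma$, every metric $g$ is locally conformally flat: in isothermal coordinates on a small coordinate patch containing the ball $B$, we may write $g = \rho^2 (dx^2 + dy^2)$ with $\rho$ smooth and bounded above and below by constants depending only on $g$. Under this conformal change the Laplace--Beltrami operator transforms as $\Delta_g = \rho^{-2}\Delta_0$, so the eigenfunction equation $\Delta_g\vphi_\lambda = \lambda\vphi_\lambda$ becomes $\Delta_0 \vphi_\lambda = \lambda\rho^2\vphi_\lambda$, i.e. $\vphi_\lambda$ satisfies a Schrödinger-type equation $\Delta_0 u + q u = 0$ on a Euclidean ball, with $\|q\|_\infty \leq C\lambda$. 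Crucially, the nodal set and the nodal domains are unchanged by the conformal factor, and the volume distortion $|\Omega_\lambda|_g / |\Omega_\lambda|_0$ as well as $|B|_g/|B|_0$ are pinched between constants. So it suffices to prove the estimate for $u$ solving $\Delta_0 u = -qu$ on a Euclidean disk.

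Next I would rescale to the wavelength scale. Rescaling the Euclidean ball of radius $R$ to radius $\sqrt{\lambda}R$ (or, when $R < 1/\sqrt{\lambda}$, to a fixed-size ball, which accounts for the $R' = \max\{R, 1/\sqrt\lambda\}$ in the statement and explains why $\lambda R'$ appears as the combination $(\sqrt{\lambda})^{2}R' = (\sqrt{\lambda})^{2n-2}R'$ with $n=2$), the potential becomes of size $O(1)$, so $u$ is a small (bounded-coefficient) perturbation of a harmonic function on a unit disk. At this point the two-dimensional harmonic/quasi-harmonic estimate~(\ref{harmonic-vol}) applies: one positive component $\Omega$ of $\{u > 0\}$ in the unit disk that reaches the half-disk has Euclidean area $\geq C/\beta$ in dimension $n=2$, where $\beta$ is the local doubling exponent (growth) of $u$. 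To finish, I would invoke the standard bound relating the growth/doubling index of an eigenfunction to its eigenvalue: on the wavelength scale the doubling exponent of $\vphi_\lambda$ is $O(\sqrt{\lambda})$ by Donnelly--Fefferman, or on the torus reduction one gets $\beta = O(\sqrt{\lambda})$ directly from the elliptic equation with $O(1)$ potential after rescaling. Then $1/\beta \gtrsim 1/\sqrt{\lambda}$, and tracking the rescaling back to the original ball produces the factor $1/(\lambda R')$. The extra $(\log\lambda)^{-1/2}$ factor comes from the fact that the perturbation estimate in dimension two — the quantitative two-dimensional version of~(\ref{harmonic-vol}) for solutions of $\Delta u = -qu$ rather than genuinely harmonic $u$ — loses a $(\log\lambda)^{1/2}$ when one controls the Cauchy data / quasiconformal distortion needed to treat $u$ as a perturbation of a harmonic function (the same loss that produces the $(\log\lambda)^{n-1}$ in Theorem~\ref{thm:local-courant-analytic}, here with $n=2$).

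The main obstacle I expect is handling the passage from a genuine harmonic function to the solution $u$ of $\Delta_0 u = -qu$ with $\|q\|_\infty = O(1)$ in a way that does not destroy the purely two-dimensional, complex-analytic input. Harmonic functions in the plane are real parts of holomorphic functions, so their nodal lines and positivity components can be analyzed via the argument principle and quantitative conformal mapping; a general solution of $\Delta_0 u = -qu$ has no such structure. The fix, as in~\cite{NazPolSod05}, is to write $u = h \cdot e^{\psi}$ or to use a Bers--Vekua-type representation exhibiting $u$ as a pseudo-holomorphic function, so that $u$ differs from a genuine harmonic function by a quasiconformal change of coordinates with distortion controlled by $\|q\|_\infty$; this change of coordinates is Hölder but not Lipschitz, and pushing forward the area estimate through it costs exactly the $(\log\lambda)^{1/2}$ factor. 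Getting the bookkeeping of the quasiconformal distortion right — in particular checking that it only deforms the component $\Omega$ by a factor that is subpolynomial in $\lambda$, while still guaranteeing that the deformed component meets the half-disk — is the delicate technical heart of the argument. The rest (isothermal coordinates, rescaling, doubling index bound, conformal invariance of nodal sets) is standard and causes no trouble beyond careful tracking of which constants depend only on $g$.
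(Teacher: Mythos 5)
Your overall architecture matches the paper's: isothermal coordinates to reduce to $\Delta_0\vphi-\eps q\vphi=0$ on a Euclidean disk, rescaling to the wavelength scale, the Nazarov--Polterovich--Sodin quasiconformal transfer to pass from this perturbed equation to genuinely harmonic functions at the cost of a $(\log\beta)^{1/2}$ factor, the Donnelly--Fefferman bound $\beta\lesssim\sqrt{\lambda}$, and the spherical-layer decomposition for balls of radius $R>1/\sqrt{\lambda}$ (which is what produces the factor $\lambda R'$). The paper likewise delegates the quasiconformal step to \cite{NazPolSod05} and the large-ball step to Section~\ref{sec:bigballs}, so on those points your level of detail is comparable to the source.

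The genuine gap is the input you call ``the two-dimensional harmonic estimate~(\ref{harmonic-vol})'', namely $|\Omega|/|B_1|\geq C/\beta$ for a positivity component of a \emph{harmonic} function reaching $B_{1/2}$, with no logarithmic loss. This is not available off the shelf: what the general argument (Theorem~\ref{thm:local-courant-analytic}') gives in dimension two is only $C/(\beta\log\beta)$, and feeding that into the quasiconformal transfer would yield a bound of the form $C_4/(\lambda R'\log\lambda)$ rather than $C_4/(\lambda R'(\log\lambda)^{1/2})$, i.e.\ no gain over the $n=2$ case of Theorem~\ref{thm:local-courant-analytic}. Removing that $\log\beta$ is precisely the new content of Section~\ref{sec:dimension2} (Theorem~\ref{thm:dim2-harmonic}): one sets $h=\vphi\chi_\Omega$, which is subharmonic, proves Eremenko's inequality $\beta_{3/4}(h;B_1)\leq C_1\beta_{1/2}(\vphi;B_1)+C_2$ with $C_1$ \emph{independent of $\Omega$} --- via the convexity of $M_h$ in $\log r$, the harmonic majorant principle applied to the subharmonic function $\log|\nabla\vphi|$, and the Beurling--Nevanlinna projection theorem --- and then combines this with Rapid Growth in Narrow Domains (Theorem~\ref{thm:rapid-growth-1st-version}) applied to $h$, which forces $\beta_{3/4}(h;B_1)\geq C/(|\Omega|/|B_1|)$. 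Your proposal contains no substitute for this step. Relatedly, your parenthetical tracing the $(\log\lambda)^{1/2}$ to ``the same loss that produces the $(\log\lambda)^{n-1}$ in Theorem~\ref{thm:local-courant-analytic}'' is off: that loss comes from optimizing over $\eps$ in the three-circles argument and would give $(\log\lambda)^{1}$ at $n=2$; the $(\log\lambda)^{1/2}$ here comes solely from the quasiconformal distortion, and only helps because the harmonic input has first been made log-free.
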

\begin{remark}
Comparing the last theorem with
Theorem~\ref{thm:local-courant-analytic} in the case $n=2$, we see
that we gain a $(\log\lambda)^{1/2}$ factor. This is due to
complex analysis methods.
\end{remark}

\vspace{1Ex}
 One can interpret
Theorem~\ref{thm:local-courant-dim-2} as an estimate on the size
of the so called ``avoided crossings'' discussed in the physics
literature (\cite{smilansky}). Roughly speaking, two nodal lines
cannot approach each other too much for a long period of time.
Theorem~\ref{thm:local-courant-dim-2} shows that two nodal lines
cannot approach each other much less than a distance of
$1/\lambda$ (interestingly, no square root here) along a line of
length $\geq C/\sqrt{\lambda}$. On the other hand, we showed
in~\cite{Man-IRND} that two adjacent nodal lines cannot stay much
closer than $C/\sqrt{\lambda}$ at all times.


%
%
%
%
%

\subsection{Methods of Proof}
The main tool which we use in the proof of
Theorem~\ref{thm:local-courant-analytic}
is a generalization of Hadamard's 3-circles theorem due to Nadirashvili.
This lets us eliminate difficult Carleman type estimates
which were used in~\cite{DonFef90}.
In more details we exploit the following three properties
of eigenfunctions:

\bigskip
\noindent\textbf{``Reduction'' to Harmonic Functions.}
We follow the principle that on balls of small
radius with respect to the wavelength $1/\sqrt{\lambda}$ a
$\lambda$-eigenfunction is almost harmonic.
This principle was developed in \cite{DonFef88}, \cite{DonFef90}, \cite{Nad91}
and~\cite{NazPolSod05}.
After rescaling an eigenfunction $\vphi_\lambda$
in a ball of radius $\sim 1/\sqrt{\lambda}$ to the unit ball $B_1$,
one arrives at a solution $\vphi$ of a second order
self adjoint elliptic operator $L$ in the
unit ball $B_1\subseteq\Rb^n$,
where $L$ has coefficients bounded independently of $\lambda$.
$\vphi$ is close to a harmonic function in a sense to be clarified below.
Moreover, the growth (defined below) of $\vphi$ in the unit ball is bounded
in terms of $\lambda$ (\cite{DonFef88}).



\bigskip
\noindent\textbf{Rapid Growth in Narrow Domains.}
If a harmonic function $\vphi$ vanishes on the boundary of a domain
which is long and narrow $\vphi$ must grow exponentially fast
along the direction in which the domain is long.
A corresponding property is true for eigenfunctions on
any $C^\infty$-manifold.

This was extensively developed and investigated
by Landis (\cite{Lan63}) for a certain class of solutions
of second order elliptic equations. The version we found in~\cite{Lan63}
cannot be directly applied to eigenfunctions.
A version for eigenfunctions but
with slightly weaker estimates than in the present paper
 was proved in~(\cite{DonFef90}).

In Section~\ref{sec:properties}
we formulate a sharp version of this property
which can be applied for eigenfunctions. We prove it
in Section~\ref{sec:rapid-growth-narrow-domains}.
The proof combines the ideas from~\cite{Lan63} and~\cite{DonFef90}.
We replace some arguments from~\cite{DonFef90} by more elementary ones.

\bigskip
\noindent\textbf{Nadirashvili's-Hadamard's 3-Circles Theorem.}
This is a ``propagation of smallness'' principle:
Let $\vphi$ be a harmonic function in the unit ball,
and suppose $|\vphi|\leq 1$.
If a harmonic function $\vphi$ is small on
a subset $E\subseteq B_r$, where $r<1$ and $|E|/|B_r|$ is large
then $|\vphi|$ can be estimated from above in any 
concentric ball $B_R$ containing $B_r$.
When $E$ is a ball centered at $0$, this reduces to the classical
Hadamard's 3-Circles Theorem. Nadirashvili replaced the innermost
circle by an arbitrary measurable subset $E$.
When this principle is adapted to eigenfunctions,
we are restricted to consider real analytic metrics.
The sharp estimate in the generalized Hadamard Theorem is the main source
from which we get the improvement in Theorem~\ref{thm:local-courant-analytic}
relative to Theorem~\ref{thm:local-courant}.

%
%
%

\subsection{Organization of the paper}
In Section~\ref{sec:rescaling} we rescale the problem on balls of
small radius compared with the wavelength to a problem on the unit
ball in $\Rb^n$. Thus, we arrive to consider a problem on almost
harmonic functions whose growth is controlled. In
Section~\ref{sec:properties} we explain Rapid Growth in Narrow
Domains and Propagation of Smallness. In Section~\ref{sec:proof}
we prove Theorem~\ref{thm:local-courant-analytic}. In
Section~\ref{sec:rapid-growth-narrow-domains} we prove Rapid
Growth in Narrow Domains. In Section~\ref{sec:dimension2} we
consider the case of smooth surfaces. We give a new estimate for
harmonic functions in dimension two, and show how
Theorem~\ref{thm:local-courant-dim-2} follows.
 In Section~\ref{sec:examples} we give two examples.
The first one is a sequence of spherical harmonics which
demonstrate that Theorem~\ref{thm:local-courant-analytic} is sharp
up to a logarithmic factor for balls of radius smaller than the
wavelength. The second one is a sequence of eigenfunctions on the
standard flat torus concerning the sharp bound in
Theorem~\ref{thm:local-courant-analytic} for balls of radius $\sim
1$.


\bigskip
\noindent\textbf{Acknowledgements:} I would like to heartily thank
Kolya Nadirashvili for indicating to me that the Generalized
Hadamard Theorem may be fruitful. I owe many thanks to Leonid
Polterovich and Misha Sodin for their continuous encouragement and
helpful discussions. I am grateful to Alexander Eremenko for
removing the extra log factor in our estimate in dimension two.
Finally, I would like to thank Alexander Borichev and 
Dima Jakobson for fruitful discussions. 
This paper was written in the IHES, MPIM-Bonn while the author
was an EPDI fellow, and the CRM, Montreal. 
The support of the EPDI, IHES, MPIM-Bonn and the CRM 
is gratefully acknowledged.

\section{Passing to the Wavelength Scale}
\label{sec:rescaling}
Consider an eigenfunction $\vphi_\lambda$ on a ball $B\subseteq M$
of radius small compared to the wavelength. Namely,
consider $\vphi_\lambda$ as a function defined on a Euclidean ball of radius 
$\sqrt{\eps/\lambda}$ contained in $\Rb^n$ as a coordinate neighbourhood. Now, 
rescale $\vphi_\lambda$ to a function $\vphi$
on the unit ball $B_1\subseteq\Rb^n$. $\vphi$ satisfies
\begin{equation}
\label{solution}
      L\vphi=0\ ,
\end{equation}
where $L$ is a second order elliptic operator with $C^{\infty}$ coefficients.
Equation~(\ref{solution}) with real analytic coefficients will be denoted by~(\ref{solution}.RA).
$L$ is of the form
\begin{equation}
\label{L-definition}
   Lu := -\partial_i (a^{ij}\partial_j u) -\eps qu \ .
\end{equation}
$a^{ij}$ is symmetric and satisfies ellipticity bounds:
\begin{equation}
\label{ellipticity}
   \kappa_1 |\xi|^2\leq a^{ij}\xi_i\xi_j \leq \kappa_2 |\xi|^2.
\end{equation}

$a^{ij}, q$ are bounded:
\begin{equation}
\label{bounds}
  \|a^{ij}\|_{C^1(\overline{B_1})} \leq K,\ |q| \leq K\ ,
 \end{equation}
and we will assume $\eps<\eps_0$, and $\eps_0$ is small.

It is a fundamental fact due to Donnelly and Fefferman 
that the eigenvalue $\lambda$
controls the growth of the eigenfunction $\vphi_\lambda$ 
and hence also of $\vphi$:
For a Euclidean ball $B\subseteq B_1$, we define the $r$-growth exponent as:
\begin{eqnarray}
\label{def:beta}
\beta_r(\vphi; B)
:=\log\frac{\sup_{B}|\vphi|}{\sup_{rB}|\vphi|}
 &,&\beta_r(\vphi) :=\sup_{B\subseteq B_1} \beta_r(\vphi; B)\ .
\end{eqnarray}
Donnelly and Fefferman proved~(\cite{DonFef88})
that for the rescaled eigenfunction $\vphi$ we have
\begin{equation}
\label{DF-growth} \frac{\beta_r(\vphi)}{\log(1/r)}\leq
C\sqrt{\lambda} \ .
\end{equation}

\section{Rapid Growth and Propagation of Smallness}
\label{sec:properties}
In this section we give precise formulations of the properties
mentioned in the introduction. We use them in Section~\ref{sec:proof}.

\begin{itemize}
\item \textbf{Rapid Growth in Narrow Domains}:
This property tells that if  a solution $\vphi$ of~(\ref{solution})
has a deep and narrow positivity component $\Omega$,
then $\vphi$ grows rapidly in $\Omega$.
A first version of it
is given by
\begin{thm}
\label{thm:rapid-growth-1st-version}
Let $\vphi$ satisfy~(\ref{solution}).
Let $\Omega$ be a connected component
of \mbox{$\{\vphi>0\}$} which intersects $B_{1/2}$.
Let $\eta>0$ be small enough. If
$$\frac{|\Omega|}{|B_1|} \leq \eta^{n-1} \ ,$$
then
   $$\frac{\sup_{\Omega}{\vphi}}{\sup_{\Omega\cap B_{1/2}} \vphi} \geq
     \me^{C_1/\eta}\ .$$
\end{thm}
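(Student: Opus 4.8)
The plan is to exhibit the narrow positivity component $\Omega$ as a thin tube in which $\vphi$, forced to vanish on the lateral boundary, must grow exponentially at the rate prescribed by the large Dirichlet eigenvalue of the thin spherical cross-sections. First I would pass to $\psi$, the extension of $\vphi|_\Omega$ by zero to all of $B_1$: every point of $\bdry\Omega\cap B_1$ lies outside $\{\vphi>0\}$ and is a limit of points of $\Omega$, so $\vphi=0$ there, and hence $\psi\in H^1_{\mathrm{loc}}(B_1)$ is a nonnegative subsolution of $L$ vanishing on $\bdry\Omega\cap B_1$. Since for $\eps_0$ small the first Dirichlet eigenvalue of $L$ on any domain of volume $\le|B_1|$ is strictly positive, a positive solution of $L\vphi=0$ vanishing on all of $\bdry\Omega$ is impossible; therefore $\Omega$ reaches $\bdry B_1$, and, being connected and meeting $B_{1/2}$, it meets every sphere $\bdry B_r$ with $1/2\le r<1$. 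Using the maximum principle for $L$ (valid for $\eps_0$ small on balls of radius $<1$), both sides of the desired inequality are captured by the nondecreasing function $\Ml(r):=\sup_{\Omega\cap\bdry B_r}\vphi=\sup_{B_r}\psi$, and the statement becomes $\Ml(1^-)/\Ml(1/2)\ge\me^{C_1/\eta}$.

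The hypothesis $|\Omega|\le\eta^{n-1}|B_1|$ enters through $\int_{1/2}^{1}|\Omega\cap\bdry B_r|\,dr\le|\Omega|$: by Chebyshev's inequality, any subinterval $I\subseteq[1/2,1]$ of length $\ge1/16$ contains a set $G_I$ of ``good'' radii with $|G_I|\ge|I|/2$ on which $|\Omega\cap\bdry B_r|\le C\eta^{n-1}$. After rescaling to the unit sphere, a good slice corresponds to a subset of $\Sb^{n-1}$ of $(n-1)$-measure $\le C\eta^{n-1}$, so the Faber--Krahn inequality on $\Sb^{n-1}$ bounds its first Dirichlet eigenvalue below by $c_0\eta^{-2}$; equivalently, the Poincar\'e inequality $\int_{\Omega\cap\bdry B_r}\vphi^2\,d\sigma\le C\eta^2r^2\int_{\Omega\cap\bdry B_r}|\nabla_S\vphi|^2\,d\sigma$ holds on each good slice, with $\nabla_S$ the spherical gradient.

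Then I would run a frequency-function argument. Set $H(r):=\int_{\Omega\cap\bdry B_r}\vphi^2\,d\sigma$, let $D(r)$ be the Dirichlet-type energy of $\vphi$ on $\Omega\cap B_r$, and put $N(r):=rD(r)/H(r)$. Because $\vphi$ vanishes on $\bdry\Omega\cap B_1$, differentiating $H$ and integrating by parts gives $\tfrac{d}{dr}\log H(r)=\tfrac{n-1}{r}+\tfrac{2N(r)}{r}$ up to an $O(\eps)$ error, together with an almost-monotonicity of $N$ for solutions of $L$; in particular $N(r)\ge-C\eps$ for all $r$. On the good radii, combining the slice Poincar\'e inequality with the co-area formula and $D'(r)=\int_{\Omega\cap\bdry B_r}|\nabla\vphi|^2\,d\sigma$ forces $N(r)\ge c_1/\eta$. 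Integrating over $[1/2,1]$, the good part contributes $\gtrsim1/\eta$ and its complement contributes $\ge-C$, whence $H(1^-)/H(1/2)\ge\me^{c_2/\eta}$. Finally I would convert back to $\Ml$: on one side $\Ml(r)^2\ge H(r)/|\Omega\cap\bdry B_r|\ge c\,H(r)$; on the other, local boundedness of the subsolution $\psi$ gives $\Ml(r)^2=(\sup_{B_r}\psi)^2\le C\int_{B_{r'}}\psi^2=C\int_0^{r'}H\le C'H(r')$ for $r'$ slightly larger than $r$, using that $\log H$ is almost nondecreasing. Combining these with the exponential gain for $H$ yields $\Ml(1^-)/\Ml(1/2)\ge\me^{C_1/\eta}$.

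The main obstacle is the frequency step: establishing the almost-monotonicity of $N$ for the variable-coefficient operator $L$ (the constant-coefficient harmonic case being classical) while simultaneously extracting from the thin slices the sharp rate $N\gtrsim1/\eta$, and then ensuring this rate survives the co-area averaging even though $H$ itself grows exponentially and the good radii might cluster near one end of $[1/2,1]$. What rescues the clustering issue is the combination of the monotonicity of $\Ml$ with the almost-monotonicity of $N$: it lets one localize the whole estimate to a subinterval $I\subseteq[1/2,1]$ on which the Chebyshev bound already supplies a definite proportion of good radii. An alternative, closer to Landis's original method, is to cover $\Omega$ by $\sim1/\eta$ thin annular pieces and in each piece compare $\vphi$ with an explicit homogeneous harmonic barrier $s^{\gamma}f(\omega)$ on the cone over a spherical cap containing that piece's cross-sections, with $f$ the first Dirichlet eigenfunction of the cap and $\gamma\asymp1/\eta$, and then multiply the per-piece gains; there the difficulty becomes controlling how far the cross-sections wander inside each annulus.
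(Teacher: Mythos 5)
Your route (co-area slicing, spherical Faber--Krahn on the thin cross-sections, then an Almgren-type frequency argument for the zero-extension $\psi=\vphi\chi_\Omega$) is genuinely different from the paper's, which is far more elementary: the paper decomposes $B_1\setminus B_{1/2}$ into $N\sim 1/\eta$ annuli, pigeonholes to find $\geq N/2$ annuli in which $\Omega$ has small relative density, applies in each a Landis-type growth lemma (itself proved from the Gilbarg--Trudinger local maximum principle $\sup_{B_{R/2}}u\leq \frac{C}{|B_R|}\int u^+ + C\|f\|_{L^n}$, applied to a convex regularization $g_\delta\circ\vphi$ of $\vphi^+\chi_{\Omega}$), and multiplies the resulting fixed gains $\me$ per annulus to get $\me^{N/2}$. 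No eigenvalue estimates on slices and no frequency function appear.

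As written, your argument has two genuine gaps at its core. First, the slice Poincar\'e inequality at a good radius $r$ gives $H(r)\leq C\eta^2 r^2\int_{\Omega\cap\bdry B_r}|\nabla_S\vphi|^2\leq C\eta^2 r^2 D'(r)$; this bounds $H(r)$ by $D'(r)$, \emph{not} by $D(r)$, so it does not ``force $N(r)=rD(r)/H(r)\geq c_1/\eta$'' pointwise on good radii --- $N$ can perfectly well be of order $1$ at a good radius. What the slice bound actually yields is a differential inequality ($D'\geq \tfrac12(\Lambda r^{-2}H + D^2/H)$ with $\Lambda\sim\eta^{-2}$, hence $N'\gtrsim \Lambda$ wherever $N\lesssim\sqrt\Lambda$ on good radii), and converting that into $\int (\log H)'\gtrsim 1/\eta$ requires a Riccati-type analysis in which $N$ may decay like $1/(r-r_0)$ across the bad radii; this is exactly the step your sketch defers to, and it is where the work lies. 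Second, the ``almost-monotonicity of $N$'' you invoke for the extension-by-zero is not available: the Rellich--Pohozaev identity on $\Omega\cap B_r$ produces the boundary term $-\int_{\bdry\Omega\cap B_r}(x\cdot\nu)\,|\partial_\nu\vphi|^2$ (and its $a^{ij}$ analogue), which has a sign only when $\Omega$ is star-shaped about the center --- precisely what one cannot assume for a narrow, wandering nodal component. The repair is to use only what survives without star-shapedness, namely $\frac{d}{dr}\log(H(r)/r^{n-1})=2D(r)/H(r)\geq 0$ and the Cauchy--Schwarz bound $D'(r)\geq D(r)^2/H(r)$, but then the clustering/localization step you propose, which leans on monotonicity of $N$, must be reworked. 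By contrast, the paper's annulus-by-annulus multiplication is insensitive to where the ``good'' annuli sit in $[1/2,1]$, which is exactly why it sidesteps both difficulties.
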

We remark that $\eta$ can be considered as a bound on the cross sections
of $\Omega$.
We will use an iterated version of the above property:
\begin{thm}
\label{thm:rapid-growth}
Let $\vphi$ satisfy~(\ref{solution}).
Let $0<r_0\leq 1/2$. Let $\Omega$ be a connected component of
\mbox{$\{\vphi>0\}$} which intersects $B_{r_0}$.
Let $\eta>0$ be small enough. If $|\Omega\cap B_r|/|B_r| \leq \eta^{n-1}$
for all $r_0<r < 1$, then
$$\frac{\sup_{\Omega} \vphi}
{\sup_{\Omega\cap B_{r_0}} \vphi} \geq
\left(\frac{1}{r_0}\right)^{C_1/\eta}\ . $$
\end{thm}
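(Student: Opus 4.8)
The plan is to deduce the iterated statement from the single-scale Theorem 3.1 by applying it at a geometric sequence of radii and multiplying the resulting growth factors. First I would fix the dilation factor $\theta = 1/2$ and consider the radii $r_k = 2^{-k}$ for $k = 0, 1, \ldots, N$, where $N$ is chosen so that $r_N$ is comparable to $r_0$ — more precisely, $N = \lfloor \log_2(1/r_0) \rfloor$, so that $r_0 \le r_N < 2 r_0$. At each scale I want to rescale $\vphi$ from the ball $B_{r_{k-1}}$ to the unit ball: set $\vphi_k(x) := \vphi(r_k x)$ (or $\vphi(r_{k-1} x)$, adjusting constants), which again satisfies an equation of the form~(\ref{solution}) — here it is important that the ellipticity bounds~(\ref{ellipticity}) and the $C^1$-bounds~(\ref{bounds}) are scale-invariant (dilation does not increase the $C^1$-norm of the top-order coefficients on a smaller ball, and the zeroth-order term only gets smaller), so the hypotheses of Theorem 3.1 hold uniformly in $k$ with the \emph{same} constant $C_1$.

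Next I would check that the hypothesis of Theorem 3.1 is inherited at each scale: the rescaled component $\Omega_k := \{y : r_k y \in \Omega\}$ is a connected component of $\{\vphi_k > 0\}$; the assumption $|\Omega \cap B_r|/|B_r| \le \eta^{n-1}$ for all $r_0 < r < 1$ gives, after rescaling, $|\Omega_k \cap B_{1/2}|/|B_{1/2}| \le 2^{n}\eta^{n-1}$ or a similar bound, and more to the point $|\Omega_k \cap B_1|/|B_1| \le \eta^{n-1}$ (taking $r = r_{k-1}$); and $\Omega$ meets $B_{r_0} \subseteq B_{r_{k}}$, hence $\Omega_k$ meets $B_{1/2}$ — wait, I need $\Omega$ to meet $B_{r_k}$, which follows since $r_k \ge r_0$ for $k \le N$. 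Then Theorem 3.1 yields
$$\frac{\sup_{\Omega \cap B_{r_{k-1}}} \vphi}{\sup_{\Omega \cap B_{r_k}} \vphi} \ge \me^{C_1/\eta}$$
for each $k = 1, \ldots, N$. Multiplying these $N$ inequalities telescopes to
$$\frac{\sup_{\Omega \cap B_1} \vphi}{\sup_{\Omega \cap B_{r_N}} \vphi} \ge \me^{N C_1/\eta},$$
and since $\sup_\Omega \vphi \ge \sup_{\Omega \cap B_1}\vphi$, $\sup_{\Omega\cap B_{r_N}}\vphi \le \sup_{\Omega \cap B_{r_0}}\vphi$ is false in the wrong direction — so I instead use $r_N \ge r_0$ to get $\sup_{\Omega \cap B_{r_N}}\vphi \ge \sup_{\Omega\cap B_{r_0}}\vphi$ is also wrong; the correct observation is that $B_{r_0} \subseteq B_{r_N}$, hence $\sup_{\Omega\cap B_{r_0}}\vphi \le \sup_{\Omega\cap B_{r_N}}\vphi$, giving
$$\frac{\sup_\Omega \vphi}{\sup_{\Omega \cap B_{r_0}}\vphi} \ge \frac{\sup_{\Omega\cap B_1}\vphi}{\sup_{\Omega\cap B_{r_N}}\vphi} \ge \me^{NC_1/\eta} = \left(2^{N}\right)^{C_1/(\eta \log 2)} \ge \left(\frac{1}{r_0}\right)^{C_1'/\eta},$$
where $C_1' = C_1/(2\log 2)$ (using $2^N \ge 1/(2 r_0) \ge (1/r_0)^{1/2}$ for $r_0$ small), absorbing the factor into the constant.

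The main obstacle I anticipate is not the telescoping bookkeeping but verifying that Theorem 3.1 can legitimately be applied at every scale with a constant independent of $k$ — i.e. that the rescaled operators $L_k$ all satisfy~(\ref{ellipticity}) and~(\ref{bounds}) with the same $\kappa_1, \kappa_2, K$ (and with $\eps_k = r_k^2 \eps \le \eps_0$, which is automatic). This is where one must be slightly careful: dilation by $r_k < 1$ multiplies the $C^1$-seminorm of $a^{ij}$ by $r_k \le 1$ and leaves the $C^0$-norm unchanged, so~(\ref{bounds}) is preserved; ellipticity is dilation-invariant. A secondary subtlety is the exact form of the hypothesis transfer: one needs $|\Omega \cap B_r|/|B_r| \le \eta^{n-1}$ on the full range $r_0 < r < 1$ to license \emph{all} $N$ applications simultaneously, and this is precisely what the theorem assumes — so each individual application only uses the hypothesis at one radius $r_{k-1} \in (r_0, 1)$, and connectivity of $\Omega$ guarantees $\Omega_k$ is still one component (not several) because $\Omega_{k}$ is the preimage of the connected set $\Omega$ under a homeomorphism. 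I would close by remarking that the choice $\theta = 1/2$ is inessential; any fixed $\theta \in (0,1)$ works and only changes $C_1'$.
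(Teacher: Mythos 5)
Your proposal is correct and follows essentially the same route as the paper: dyadic radii $t_k=2^{-k}$ down to $r_0$, the scale-invariance of the bounds~(\ref{ellipticity})--(\ref{bounds}) under dilation, one application of Theorem~\ref{thm:rapid-growth-1st-version} per scale, and telescoping with $N\sim\log(1/r_0)$. The only cosmetic caveat is that $\Omega\cap B_{t_k}$ need not be connected after rescaling (your homeomorphism remark does not quite show this), but the paper glosses over the same point and the fix is routine: apply the single-scale theorem to each sub-component meeting $B_{t_{k+1}}$ and take the maximum.
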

%

\item \textbf{Nadirashvili's-Hadamard's 3-Circles Theorem:}
\begin{thm}[\cite{Nad76}]
\label{thm:nad-had}
Let $\vphi$ satisfy~(\ref{solution}.RA),
Let $E\subseteq B_R$, where $R<1$.
\begin{equation*}
 \mbox{If } \frac{\sup_E |\vphi|}{\sup_{B_1} |\vphi|}
\leq \left(\frac{|E|}{|B_1|}\right)^{\gamma/n},\  \mbox{then }
\frac{\sup_{B_R} |\vphi|}{\sup_{B_1}|\vphi|}
\leq (c_0 R)^{c_1\gamma},
\end{equation*}
whenever $\gamma>\gamma_0$, and where
$\gamma_0, c_0, c_1$ depend on $\kappa_1, \kappa_2, K, \eps_0, n$.
\end{thm}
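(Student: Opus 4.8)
The plan is to obtain Theorem~\ref{thm:nad-had} by combining the classical three-balls inequality for $L$ with a Remez-type estimate in which the real analyticity of the coefficients in~(\ref{solution}.RA) enters; this is in essence the argument of~\cite{Nad76}. Normalise $\sup_{B_1}|\vphi|=1$, so that the hypothesis reads $\sup_E|\vphi|\le\mu^{\gamma/n}$ with $\mu:=|E|/|B_1|\le R^n$, and put $M_R:=\sup_{B_R}|\vphi|$; the goal is then $M_R\le (c_0R)^{c_1\gamma}$ once $\gamma>\gamma_0$. Since the coefficients satisfy~(\ref{ellipticity}) and~(\ref{bounds}), $\vphi$ obeys the log-convexity (three-balls) inequality of Landis--Agmon--Garofalo--Lin: there is $\theta=\theta(\kappa_1,\kappa_2,K,n)\in(0,1)$ with $\sup_{B_{2r}}|\vphi|\le C(\sup_{B_r}|\vphi|)^{\theta}(\sup_{B_{4r}}|\vphi|)^{1-\theta}$ for all $r\le 1/4$. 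Iterating this downward from $r\sim 1$ (equivalently, using almost-monotonicity of the Almgren frequency) bounds the doubling index of $\vphi$ at scale $R$:
\begin{equation*}
 N\ :=\ \log_2\frac{\sup_{B_{2R}}|\vphi|}{\sup_{B_R}|\vphi|}\ \le\ C\,\frac{\log(1/M_R)}{\log(1/R)}+C\qquad(R<1/2)\,.
\end{equation*}
So a solution that is very small on $B_R$ relative to $B_1$ automatically has large doubling index there, and conversely.

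Because the coefficients in~(\ref{solution}.RA) are real analytic, $\vphi$ is real analytic, and effective versions of the Morrey--Nirenberg regularity estimates control the holomorphic extension of $\vphi$ and its modulus there in terms of $N$. Consequently $\vphi$ is ``polynomial-like'' of degree comparable to $N$ on $B_R$, and the multidimensional Remez inequality---restrict $\vphi$ to line segments, apply the one-dimensional Remez/Chebyshev estimate, integrate over directions---gives, for every measurable $E\subseteq B_R$,
\begin{equation*}
 \sup_{B_R}|\vphi|\ \le\ \Bigl(\frac{C\,|B_R|}{|E|}\Bigr)^{c\,N}\,\sup_E|\vphi|\,,
\end{equation*}
with $c,C$ depending only on $\kappa_1,\kappa_2,K,\eps_0,n$. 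It is precisely this step that forces the analyticity hypothesis: for merely $C^\infty$ coefficients the doubling index is still finite but $\vphi$ need not be polynomial-like, and no such inequality holds.

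To finish, set $\nu:=\log_2(1/M_R)$. The displayed Remez bound together with the hypothesis gives $2^{-\nu}\le (CR^n/\mu)^{cN}\mu^{\gamma/n}$, while the frequency bound gives $N\le C\nu/\log_2(1/R)+C$. Substituting the latter into the former and taking $\log_2$ produces a single inequality for $\nu$ in which $\nu$ reappears on the right only through $N$; since the contributions of $N$ are linear in $\nu$ whereas the favourable term $(\gamma/n)\log_2\mu$ is linear in $\gamma$ and (using $\mu\le R^n$) no smaller in absolute value than a constant multiple of $\gamma\log_2(1/R)$, choosing $\gamma_0$ large enough that $\gamma/n$ dominates all the linear-in-$\nu$ terms lets one solve for $\nu$ and conclude $\nu\ge c_1\gamma\log_2(1/R)-O(1)$, i.e. $M_R\le (c_0R)^{c_1\gamma}$. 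The range $R\in[1/c_0,1)$ is trivial after taking $c_0$ a suitable constant, so that the asserted bound is $\ge 1\ge M_R$; the passage between $\sup$-norms and $L^2$-means used for the frequency estimate costs only constants by interior elliptic estimates.

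The main obstacle is the Remez-type inequality of the second paragraph, with exponent linear in the doubling index: this is the genuinely new ingredient over the classical Hadamard theorem and the ultimate source of the sharp power of $\gamma$. Proving it cleanly requires (i) making the real-analytic regularity of solutions of~(\ref{solution}.RA) quantitative---an explicit radius of holomorphic extension together with a bound on $|\vphi|$ there in terms of $N$---and (ii) the reduction to the one-variable Remez inequality, i.e. showing that for a set of directions of positive measure the restriction of $\vphi$ to the corresponding segment differs from a polynomial of degree $O(N)$ by a controlled amount, while the exceptional directions see only a negligible fraction of $|E|$. Everything else is the elementary bookkeeping indicated above.
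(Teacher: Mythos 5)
First, a point of comparison: the paper does not prove Theorem~\ref{thm:nad-had} at all. It is quoted from Nadirashvili's paper, with only the remark that the harmonic-function proof there extends to~(\ref{solution}.RA) because the single use of harmonicity is an interior analytic-regularity estimate. So there is no argument in the text to check you against; what can be assessed is whether your outline would constitute a proof. Its architecture --- (a) almost-monotonicity of the doubling index via the three-balls inequality, (b) a Remez-type inequality with exponent linear in the doubling index $N$, (c) closing arithmetic --- is indeed the standard modern reading of Nadirashvili's theorem, and your step (c) checks out: writing $L=\log_2(1/R)$ and $S=\log_2(CR^n/\mu)\geq 0$, the combined inequality $\nu(1+cCS/L)\geq \gamma L+(\gamma/n-cC)S-O(\gamma)$ gives $\nu\geq c_1\gamma L$ for $\gamma_0\gtrsim ncC$ in both regimes $S\lesssim L$ and $S\gg L$, with large $R$ absorbed into $c_0$. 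Step (a) is standard for operators satisfying~(\ref{ellipticity})--(\ref{bounds}) with $\eps<\eps_0$.

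The genuine gap is step (b), which you yourself call the main obstacle: the inequality $\sup_{B_R}|\vphi|\leq (C|B_R|/|E|)^{cN}\sup_E|\vphi|$ is asserted, not proved, and it carries the entire content of the theorem --- everything else in the proposal is soft. Worse, the mechanism you sketch for it does not close as stated. If the restriction of $\vphi$ to a segment merely ``differs from a polynomial of degree $O(N)$ by a controlled amount'', that approximation error must be negligible compared with $\sup_E|\vphi|$, which under the hypothesis can be as small as $\mu^{\gamma/n}$ with $\gamma$ arbitrarily large; an error depending only on $N$ cannot be beaten down to that scale. The correct one-dimensional ingredient is a factorization rather than an approximation: the quantitative analytic extension bounds the number of zeros of the restricted function in a complex neighbourhood by $O(N)$ (Jensen/Cartan), one writes it as a polynomial of degree $O(N)$ times a zero-free factor whose modulus has bounded logarithmic oscillation, and one applies the Remez--Chebyshev estimate to the polynomial factor alone. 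Until that lemma is established with constants depending only on $\kappa_1,\kappa_2,K,\eps_0,n$, the proposal is an outline of the right proof, not a proof.
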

When $L$ is the Euclidean Laplacian, $n=2$ and $E$ is a ball centered
at $0$ we get the classical Hadamard Theorem which says that
the maximal function $\max_{B_r} |\vphi|$ is a logarithmic convex function of
$\log r$. When $E=B_R$ and $n>2$,
this theorem was proved by Gerasimov in~\cite{Ger66}.

\begin{observation} Theorem~\ref{thm:nad-had} is equivalent to
\begin{equation}
\label{propagation}
  \sup_{B_1} |\vphi| \leq \sup_E |\vphi| \cdot
  \left(\frac{|B_1|}{|E|}\right)^{C\beta_R(\vphi; B_1)/\log(1/R)}\ ,
\end{equation}
for all $R$ small enough and where $E\subseteq B_R$.
One should compare~(\ref{propagation}) with inequality~(D) 
in~\cite[p. 636]{DonFef90}.
\end{observation}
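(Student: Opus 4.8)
The plan is to rewrite each of the two inequalities as a statement about a single ``critical'' growth exponent and then match them up; the whole translation is elementary bookkeeping with logarithms. Write $M_1=\sup_{B_1}|\vphi|$, $M_R=\sup_{B_R}|\vphi|$, $M_E=\sup_E|\vphi|$, $\mu=|E|/|B_1|$, and $\beta=\beta_R(\vphi;B_1)=\log(M_1/M_R)$. Since $E\subseteq B_R\subsetneq B_1$ we have $0<\mu\le R^n<1$, hence $\log(1/\mu)\ge n\log(1/R)$; and we may assume $M_E>0$, the case $|E|=0$ being vacuous and $M_E=0$ with $|E|>0$ being impossible by unique continuation for~(\ref{solution}.RA). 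Let $\gamma_E:=n\log(M_1/M_E)/\log(1/\mu)$ be the exponent for which $\sup_E|\vphi|/\sup_{B_1}|\vphi|=(|E|/|B_1|)^{\gamma_E/n}$ holds with equality. Then (\ref{propagation}) is literally the inequality $\gamma_E\le Cn\,\beta/\log(1/R)$, while the conclusion $M_R/M_1\le(c_0R)^{c_1\gamma}$ of Theorem~\ref{thm:nad-had} is equivalent to $\beta\ge c_1\gamma\log(1/(c_0R))$, which, for $R$ below a threshold depending only on $c_0$ (so that $\log(1/(c_0R))\ge\frac12\log(1/R)$), implies $\gamma\le (2/c_1)\,\beta/\log(1/R)$.

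For (\ref{propagation})$\Rightarrow$Theorem~\ref{thm:nad-had}: if the hypothesis $\sup_E|\vphi|/\sup_{B_1}|\vphi|\le(|E|/|B_1|)^{\gamma/n}$ holds for some $\gamma>\gamma_0$ then, the base being $<1$, it forces $\gamma\le\gamma_E$; combined with (\ref{propagation}) this gives $\gamma\le\gamma_E\le Cn\,\beta/\log(1/R)$, hence $M_R/M_1=\me^{-\beta}\le R^{\gamma/(Cn)}$, which is the desired conclusion with $c_0=1$ and $c_1=1/(Cn)$ (indeed for every $\gamma>0$). This direction is immediate. For Theorem~\ref{thm:nad-had}$\Rightarrow$(\ref{propagation}) one splits on $\gamma_E$. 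If $\gamma_E>\gamma_0$, apply Theorem~\ref{thm:nad-had} with $\gamma=\gamma_E$: its hypothesis holds with equality, so its conclusion gives, via the implication recorded above, $\gamma_E\le(2/c_1)\,\beta/\log(1/R)$, which is exactly (\ref{propagation}) with $C=2/(nc_1)$. So the two statements coincide verbatim throughout the range of growth $\gamma_E>\gamma_0$, which is the range that matters in the applications.

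The case $\gamma_E\le\gamma_0$ --- where Theorem~\ref{thm:nad-had} is vacuous and (\ref{propagation}) must be obtained by hand --- is the main obstacle. In this case $\log(M_1/M_E)\le(\gamma_0/n)\log(1/\mu)$ is already small relative to $\log(1/\mu)$; the substantive sub-case is when the growth $\beta$ is also small, and there I would use interior elliptic estimates: after replacing $\vphi$ by $-\vphi$ if necessary, pick $x_0\in\overline{B_R}\subset B_{1/2}$ with $\vphi(x_0)=M_R\ge\me^{-\beta}M_1$; then $w:=M_1-\vphi\ge0$ in $B_1$ satisfies $-\partial_i(a^{ij}\partial_j w)=-\eps q\vphi$ with right-hand side of size $O(\eps M_1)$ and $w(x_0)\le\beta M_1$, so the weak Harnack / local-maximum principle yields $\sup_{B_{1/2}}w\le C(\beta+\eps)M_1$; hence $M_E\ge\inf_E\vphi\ge(1-C(\beta+\eps))M_1$ and $\log(M_1/M_E)=O(\beta+\eps)$, which together with $\log(1/\mu)\ge n\log(1/R)$ gives (\ref{propagation}) after enlarging $C$ (the $\eps$-term disappears when $L$ is the Laplacian, which is the setting in which the harmonic-function estimate (\ref{harmonic-vol}) is used). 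The complementary possibility $\gamma_E\le\gamma_0$ with $\beta$ not small requires only a minor further remark --- for instance that for genuinely harmonic $\vphi$ the three-circles estimate of Theorem~\ref{thm:nad-had} holds with $\gamma_0=0$ --- so I expect the one genuinely delicate point to be this low-growth regime, namely the situation in which $\vphi$ simply fails to be small on $E$; everything else is algebra with logarithms, the single substantive input being $\log(1/\mu)\ge n\log(1/R)$, valid because $E\subseteq B_R$.
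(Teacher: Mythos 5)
Your central computation is the right one, and it is all the paper intends here: the Observation is stated without proof, and the intended content is exactly your identification of the ``critical exponent'' $\gamma_E=n\log(M_1/M_E)/\log(1/\mu)$, for which the hypothesis of Theorem~\ref{thm:nad-had} holds with equality, together with the elementary facts that $\gamma\leq\gamma_E$ whenever the hypothesis holds for $\gamma$, that $\log(1/\mu)\geq n\log(1/R)$ since $E\subseteq B_R$, and that $\log(1/(c_0R))\geq\tfrac12\log(1/R)$ for $R$ small. Both of your implications in the regime $\gamma_E>\gamma_0$ are correct, and the converse direction ((\ref{propagation}) implies the three-circles statement) is complete as you give it.

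The gap is in the residual case $\gamma_E\leq\gamma_0$, and it is not repairable as stated, because in that regime the literal inequality~(\ref{propagation}) is \emph{false} for the operator $L$ with $\eps\neq 0$. Take $a^{ij}=\delta^{ij}$, $q=1$ and $\vphi(x)=\prod_{i=1}^n\cos(\sqrt{\eps/n}\,x_i)$, which solves~(\ref{solution}) and has its maximum at the origin; then $\beta_R(\vphi;B_1)=0$ for every $R$, while $\sup_E|\vphi|<\sup_{B_1}|\vphi|$ for any small set $E\subseteq B_R$ avoiding a neighbourhood of $0$, so the right-hand side of~(\ref{propagation}) equals $\sup_E|\vphi|<\sup_{B_1}|\vphi|$. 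This is precisely the $\eps$-term that survives your Harnack estimate $\log(M_1/M_E)=O(\beta+\eps)$; it cannot be removed. Even for genuinely harmonic $\vphi$ your argument covers only $\beta\leq\beta_0$ (an absolute constant), and the intermediate regime $\beta_0<\beta<\gamma_0\log(1/R)/(Cn)$ with $R\to 0$ is left to the unsubstantiated claim that Nadirashvili's theorem holds with $\gamma_0=0$. The honest reading of the Observation — consistent with the paper's systematic use of $\beta_r'=\max(\beta_r,3)$ and with the only regime in which Theorem~\ref{thm:nad-had} is ever invoked — is that the equivalence is asserted up to constants and only where the growth is bounded below, i.e.\ exactly the case $\gamma_E>\gamma_0$ that you dispose of correctly. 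So: keep your first two paragraphs, and replace the third by the remark that for $\beta_R$ bounded below by a positive constant the case $\gamma_E\leq\gamma_0$ is absorbed into the constant $C$, while for unrestricted $\beta_R$ the two statements are not literally equivalent.
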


\begin{remark}
The proof in~\cite{Nad76} is for harmonic functions. When one goes
through the proof, one sees that the only point where the
harmonicity of $\vphi$ is used is an interior elliptic regularity
estimate which is true for solutions of any second order elliptic
operator with real analytic coefficients (See~\cite[Theorem
7.5.1]{Hor64}).
\end{remark}

\end{itemize}
\section{Proof of Theorem~\ref{thm:local-courant-analytic}}
\label{sec:proof}
From Section~\ref{sec:rescaling} it follows that on balls of
radius $R<\sqrt{\eps_0/\lambda}$
Theorem~\ref{thm:local-courant-analytic} is equivalent to
Theorem~\ref{thm:local-courant-analytic}' below. In section~\ref{sec:bigballs}
we complete the argument for larger balls.

Recall the definition~(\ref{def:beta}) of $\beta_r$. We set
\begin{equation*}
\beta_r'(\vphi; B) := \max\{\beta_r(\vphi; B), 3\},\quad
\beta_r'(\vphi):=\max(\beta_r(\vphi), 3)\ .
\end{equation*}
\begin{thm1.3p}
\label{thm:rescaled}
Let $\vphi$ satisfy~(\ref{solution}.RA).
Let $\Omega$ be a
 connected component of $\{\vphi\neq 0\}$. Suppose $\Omega\cap B_{1/2}\neq\emptyset$.
Then, $$\frac{|\Omega|}{|B_1|}
\geq \frac{C_1}{(\beta_{\rho_0}'(\vphi)\log\beta_{\rho_0}'(\vphi))^{n-1}}\ ,$$
where $0<\rho_0<1$ depends only on $K, \kappa_1, \kappa_2, \eps_0$ and $n$.
\end{thm1.3p}

Theorem~\ref{thm:local-courant-analytic}' immediately follows from
\begin{thm1.3pp}
Let $\vphi$ satisfy~(\ref{solution}.RA).
Suppose $\vphi(0)>0$. Let $\Omega$ be the connected component of
$\{\vphi>0\}$ which contains $0$. Then
$$\frac{|\Omega|}{|B_1|}
\geq \left(\frac{C_1\eps}
{\beta_{\rho_0}'(\vphi; B_1)}\right)^{(n-1)/(1-\eps)}$$
for all $0<\eps<1/2$, and where $0<\rho_0<1$ depends only on
$K, \kappa_1, \kappa_2, \eps_0, n$.
\end{thm1.3pp}
\begin{proof}[Theorem~\ref{thm:local-courant-analytic}'' 
implies Theorem~\ref{thm:local-courant-analytic}']
One may use an affine transformation in order
to move the center of the ball to $\Omega\cap B_{1/2}$.
Also, one can check that
$$\sup_{0<\eps<1/2} (\eps/A)^{1/(1-\eps)} \geq C/A\log A\ .$$
Indeed,
   with $\eps= 1/\log A$, one has
   $$\left(\frac{\eps}{A}\right)^{1/(1-\eps)} \geq
  \frac{\eps}{A}\left(\frac{\eps}{A}\right)^{2\eps} =
  \frac{\me^{2\eps\log\eps - 2}}{A\log A} \geq \frac{\me^{-3}}{A\log A}\ .$$
\end{proof}

We now give the heart of the argument which proves 
Theorem~\ref{thm:local-courant-analytic}''. It is close to the argument
in~\cite{DonFef90}. The main new tool we introduce in the argument
is the propagation of smallness principle in the form of
Nadirashvili's-Hadamard's 3-circles Theorem.
\begin{proof}[Idea of Proof of Theorem~\ref{thm:local-courant-analytic}'']
If $|\vphi|\leq 1$ in the ball $B_1$
and $|\Omega|$ is very small, then by Rapid Growth in Narrow Domains
$\vphi$ should be very small on $E_{1/2}:=|\Omega\cap B_{1/2}|$.
Then by the propagation of smallness
from $E_{1/2}$ to $B_{1/2}$, we know that $|\vphi|$ must be small also
on the ball $B_{1/2}$, but if it is too small we will get
that the growth of $\vphi$ on $B_1$ is bigger than $\beta_{1/2}(\vphi; B_1)$.
The argument above has to be modified a little bit,
since we can apply propagation of smallness only if we know that
the relative size of $E_{1/2}$ in $B_{1/2}$ is large.
All one has to do is to replace $E_{1/2}$ by $E_r$ for
a suitable $r<1/2$.
\end{proof}

We now give the complete proof.
\begin{proof}[Proof of Theorem~\ref{thm:local-courant-analytic}'']
We may assume
\begin{equation}
\label{assumptioneta}
  |\Omega|/|B_1| < (\eps/\gamma_1)^{(n-1)/(1-\eps)},
\end{equation}
where $\gamma_1$ is a large enough constant.
Otherwise, the theorem becomes trivial.
%
Let $c_0>1$ be as in Theorem~\ref{thm:nad-had}, and let
$\rho_0 = 1/(2c_0)$.
Define
\begin{equation}
\label{r0-definition}
r_0 := \sup\left\{r:\, \frac{|\Omega\cap B_r|}{|B_r|}
\geq \frac{1}{\rho_0^n}\left(\frac{|\Omega|}{|B_1|}\right)^{1-\eps}\right\}\ .
\end{equation}
Let $E:=\Omega\cap B_{r_0}$.
\begin{claim}
$$r_0 <\rho_0 (|E|/|B_1|)^{\eps/n}\ .$$
\end{claim}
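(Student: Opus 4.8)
The plan is to exploit the continuity of the function $r\mapsto|\Omega\cap B_r|/|B_r|$ together with the fact that $r_0$ is a \emph{supremum}, in order to turn the defining inequality~(\ref{r0-definition}) into an equality at $r=r_0$; the estimate then drops out by elementary algebra.

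First I would check that $r_0$ is well defined and lies in $(0,1)$. For $r$ small the ball $B_r$ is contained in the open set $\Omega$ (which contains $0$), so $|\Omega\cap B_r|/|B_r|=1$; on the other hand, by~(\ref{assumptioneta}) and $\eps<1/2$ one has $(|\Omega|/|B_1|)^{1-\eps}<(\eps/\gamma_1)^{n-1}\le\rho_0^{n}$ provided $\gamma_1$ is chosen large enough (depending on $\rho_0,n$), so the defining inequality holds for all small $r$ and the set is nonempty. At $r=1$ the inequality would read $|\Omega|/|B_1|\ge\rho_0^{-n}(|\Omega|/|B_1|)^{1-\eps}$, i.e.\ $|\Omega|/|B_1|\ge\rho_0^{-n/\eps}>1$, which is false since $\Omega\subseteq B_1$; by continuity it fails on a whole neighbourhood of $1$, so $r_0<1$. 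Since $|\Omega\cap B_r|$ is continuous in $r$ (spheres have measure zero) and $|B_r|=r^{n}|B_1|$, the ratio $r\mapsto|\Omega\cap B_r|/|B_r|$ is continuous on $(0,1]$. Passing to the limit $r\uparrow r_0$ through the defining set gives $|E|/|B_{r_0}|\ge\rho_0^{-n}(|\Omega|/|B_1|)^{1-\eps}$, while passing to the limit $r\downarrow r_0$, where by definition of the supremum the inequality fails, gives the reverse inequality; hence
\[ \frac{|E|}{|B_{r_0}|}=\frac{1}{\rho_0^{n}}\left(\frac{|\Omega|}{|B_1|}\right)^{1-\eps}. \]

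Using $|B_{r_0}|=r_0^{n}|B_1|$ this reads $|E|/|B_1|=(r_0/\rho_0)^{n}(|\Omega|/|B_1|)^{1-\eps}$. Since $E\subseteq\Omega$ we have $|\Omega|\ge|E|$, and since $1-\eps>0$ this yields $|E|/|B_1|\ge(r_0/\rho_0)^{n}(|E|/|B_1|)^{1-\eps}$, so $(|E|/|B_1|)^{\eps}\ge(r_0/\rho_0)^{n}$, i.e.\ $r_0\le\rho_0(|E|/|B_1|)^{\eps/n}$. To upgrade this to the \emph{strict} inequality of the Claim it suffices to exclude the case $|\Omega|=|E|$. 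If $|\Omega|=|\Omega\cap B_{r_0}|$ then, $\Omega$ being open and $r_0<1$, the component $\Omega$ would be relatively compact in $B_1$, with $\vphi>0$ in $\Omega$ and $\vphi=0$ on $\partial\Omega$; testing $L\vphi=0$ against $\vphi$ and invoking the ellipticity~(\ref{ellipticity}), the bound~(\ref{bounds}) and the positivity of the first Dirichlet eigenvalue of $-\Delta$ on $B_1$ forces $\eps\ge c(\kappa_1,K,n)>0$, contradicting $\eps<\eps_0$ for $\eps_0$ small. Hence $|\Omega|>|E|$ and the inequality is strict.

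The argument is essentially bookkeeping, with no serious obstacle. The only points requiring care are the nonemptiness of the set in~(\ref{r0-definition}) — which is exactly the role of the largeness of $\gamma_1$ in~(\ref{assumptioneta}) — and the passage to equality at $r=r_0$, which relies on the continuity of $r\mapsto|\Omega\cap B_r|/|B_r|$ and on having already established $r_0<1$.
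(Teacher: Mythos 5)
Your proof is correct and rests on the same mechanism as the paper's: the defining inequality of $r_0$ still holds at $r=r_0$ (by continuity of $r\mapsto|\Omega\cap B_r|/|B_r|$), and combining it with $|E|\le|\Omega|$ and $0<1-\eps<1$ yields $(r_0/\rho_0)^n\le(|E|/|B_1|)^{\eps}$, which is exactly the paper's one-line algebraic chain. Your extra scaffolding (nonemptiness of the set in~(\ref{r0-definition}), $r_0<1$, and the maximum-principle argument for strictness) is sound but inessential, since only the non-strict form~(\ref{r0-romegacapbr0}) is used afterwards.
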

\begin{proof}[Proof of Claim]
By definition,
$$(r_0/\rho_0)^n\leq(|\Omega|/|B_1|)^{\eps}
(|E|/|\Omega|)\leq
(|\Omega|/|B_1|)^{\eps}(|E|/|\Omega|)^{\eps} .$$
\end{proof}
In particular,
\begin{equation}
\label{r0-romegacapbr0} r_0 \leq \min\{\rho_0,
(|E|/|B_1|)^{\eps/n}\}\ .
\end{equation}
Theorem~\ref{thm:rapid-growth} and
Inequality~(\ref{r0-romegacapbr0}) give together
\begin{multline}
\label{small-in-omegar0}
  \log\frac{\sup_{E} |\vphi|}{\sup_{B_1} |\vphi|}\leq \log\frac{\sup_{E} |\vphi|}{\sup_{\Omega} |\vphi|}
\leq
C_1(\log r_0)\rho_0^{n/(n-1)}/(|\Omega|/|B_1|)^{(1-\eps)/(n-1)}\leq \\
 C_2\eps(\log(|E|/|B_1|)^{1/n})/(|\Omega|/|B_1|)^{(1-\eps)/(n-1)} \ .
\end{multline}
Due to assumption~(\ref{assumptioneta}) and to $E\subseteq B_{\rho_0}$
we can apply Theorem~\ref{thm:nad-had}:
$$-\beta_{\rho_0}(\vphi; B_1)
=\log\frac{\sup_{B_{\rho_0}} |\vphi|}{\sup_{B_1} |\vphi|}
\leq {C_3\eps (\log c_0 \rho_0)/(|\Omega|/|B_1|)^{(1-\eps)/(n-1)}}\ .$$
The last inequality can be rewritten as
 $$\frac{|\Omega|}{|B_1|} \geq
\left(\frac{\eps C_4}{\beta_{\rho_0}(\vphi; B_1)}\right)^{(n-1)/(1-\eps)} \ .$$
\end{proof}

\subsection{Handling Large Balls}
\label{sec:bigballs}
So far, we proved Theorem~\ref{thm:local-courant-analytic} for small balls.
We now treat the case of balls $B$ whose radius
radius $R> \sqrt{\eps_0/\lambda}$. There are
two cases: \begin{itemize}
\item[1)] $\Omega_\lambda=\Al_\lambda\subseteq B$. By the Faber-Krahn
Inequality, we know that
  $|\Omega_\lambda| \geq C_2/(\sqrt{\lambda})^n$.
Hence,
$$\frac{|\Omega_\lambda|}{|B|} \geq
\frac{C_3}{(R\sqrt{\lambda})^n}\ ,$$ which gives the estimate in
Theorem~\ref{thm:local-courant-analytic}.

\item[2)] $\Omega_\lambda$ touches $\partial B$. We
decompose $B\setminus \Bhf$ into spherical layers, each of width
$\sqrt{\eps_0/\lambda}$. In each spherical layer we can find a
ball $B'$ of radius $(\sqrt{\eps_0/\lambda})/2$ such that
$\Omega_\lambda$ cuts $\Bhf'$. The number of such layers is
$\sim R\sqrt{\lambda/\eps_0}$. By Theorem~\ref{thm:local-courant-analytic}
for small balls the total
volume of $\Omega_\lambda$ is
$$|\Omega_\lambda|\geq \sum_{B'} |\Omega_\lambda\cap B'|\geq
C_5
R\sqrt{\lambda/\eps_0}|B'|/(\sqrt{\lambda}\,\log\lambda)^{n-1}\
.$$

The last inequality gives
$$\frac{|\Omega_\lambda|}{|B|}\geq
\frac{C_6}{(R\sqrt{\lambda})^{n-1}(\sqrt{\lambda}\,\log\lambda)^{n-1}}\ .$$
\end{itemize}
This completes the proof of Theorem~\ref{thm:local-courant-analytic}.
\section{Rapid Growth in Narrow Domains}
\label{sec:rapid-growth-narrow-domains}
In this section we prove Theorems~\ref{thm:rapid-growth-1st-version}
and~\ref{thm:rapid-growth}.
They follow from the classical growth Lemma:
Let $\vphi$ satisfy~(\ref{solution}).
Let $B^y_R=B(y, R)\subseteq B_1$.
Suppose $\vphi(y)>0$, and let
$\Omega_y$ be the connected component of $\{\vphi>0\}\cap B^y_R$
which contains $y$.
The Growth Lemma is
\begin{lem}[{\cite[Lemma~3.1]{Lan63}, \cite[p.~651]{DonFef90}}]
\label{lem:growth-lemma}
For all $A>1$ there exists $\gamma(A)>0$ such that if
$$\frac{|\Omega_y|}{|B^y_R|} \leq \gamma(A)\ ,$$
then
$$\frac{\sup_{\Omega_y} |\vphi|}
{\sup_{\Omega_y \cap B^y_{R/2}} |\vphi|} \geq A \ . $$
In particular, $\gamma$ does not depend on $R$, neither on $y$.
\end{lem}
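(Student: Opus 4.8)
\medskip
\noindent\emph{Proof strategy.}
The plan is to reduce to the unit scale, replace $\vphi$ on $B^y_R$ by its restriction to the component $\Omega_y$ extended by zero, notice that this truncation is a subsolution of a uniformly elliptic operator of the admissible type, and then extract the growth from the classical interior sup-bound for subsolutions. First I would rescale: $B^y_R=B(y,R)\subseteq B_1$ forces $R\le 1$, so $\tld\vphi(x):=\vphi(y+Rx)$ on $B_1$ again solves an equation of the form~(\ref{solution}), with coefficients $\tld a^{ij}(x)=a^{ij}(y+Rx)$ and $\tld q(x)=R^2q(y+Rx)$; since $R\le 1$ the bounds~(\ref{ellipticity})--(\ref{bounds}) are preserved with the same constants, and $|\Omega_y|/|B^y_R|$ is scale-invariant. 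So it is enough to handle $y=0$, $R=1$; set $\Omega:=\Omega_0$, $M:=\sup_\Omega\vphi=\sup_\Omega|\vphi|$ (we may assume $M<\infty$, else there is nothing to prove), and the goal becomes: if $|\Omega|/|B_1|\le\gamma(A)$ then $M\ge A\,\sup_{\Omega\cap B_{1/2}}\vphi$, with $\gamma(A)$ depending only on $n,\kappa_1,\kappa_2,K,\eps_0$.

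The central step is to put $v:=\vphi\cdot\mathbf 1_\Omega$, extended by $0$ on $B_1\setminus\Omega$, and to verify that $v$ is a nonnegative weak subsolution in $B_1$ of the uniformly elliptic operator $\tld L u:=-\partial_i(a^{ij}\partial_j u)-|\eps q|\,u$. Because $\vphi$ vanishes continuously along $\bdry\Omega\cap B_1$ one obtains $v\in W^{1,2}_{\mathrm{loc}}(B_1)$ with $\grad v=\mathbf 1_\Omega\grad\vphi$ (a routine truncation argument: approximate by $(\vphi-\delta)^+\mathbf 1_\Omega$, whose positivity set has smooth boundary for a.e.\ $\delta$ by Sard, and let $\delta\downarrow 0$); and for $0\le\psi\in C_c^\infty(B_1)$, integrating by parts over $\Omega$ together with $-\partial_i(a^{ij}\partial_j\vphi)=\eps q\vphi$ and the fact that the outward conormal derivative $a^{ij}\nu_i\partial_j\vphi\le 0$ on $\bdry\Omega\cap B_1$ (there $\grad\vphi$ points into $\Omega$) gives $\int a^{ij}\partial_j v\,\partial_i\psi\le\int\eps q\,v\,\psi\le\int|\eps q|\,v\,\psi$. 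This is the step I expect to require the most care, though it is exactly the mechanism underlying local forms of the Faber--Krahn inequality; the only role of the smallness of $\eps$ is that $|\eps q|\le\eps_0K$, keeping $\tld L$ in the admissible class.

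Finally I would invoke the De Giorgi--Nash--Moser local boundedness estimate for subsolutions of $\tld L$ (see, e.g., Gilbarg--Trudinger, Theorem~8.17): there is a constant $C_0=C_0(n,\kappa_1,\kappa_2,K,\eps_0)$ such that $\sup_{B(z,1/4)}v\le C_0\left(\int_{B(z,1/2)}v^2\right)^{1/2}$ whenever $B(z,1/2)\subseteq B_1$. Picking $z\in\Omega\cap B_{1/2}$ with $\vphi(z)=\sup_{\Omega\cap B_{1/2}}\vphi$ --- positive since $0\in\Omega\cap B_{1/2}$ --- and using $0\le v\le M$ with $v\equiv 0$ off $\Omega$,
$$\sup_{\Omega\cap B_{1/2}}|\vphi|=v(z)\le\sup_{B(z,1/4)}v\le C_0\left(|\Omega|\,M^2\right)^{1/2}=C_0\,|\Omega|^{1/2}\sup_\Omega|\vphi|\ .$$
Hence $\sup_\Omega|\vphi|\big/\sup_{\Omega\cap B_{1/2}}|\vphi|\ge\left(C_0\,|\Omega|^{1/2}\right)^{-1}$, so it suffices to take $\gamma(A):=\left(C_0A\right)^{-2}|B_1|^{-1}$: then $|\Omega|/|B_1|\le\gamma(A)$ forces the ratio to be $\ge A$, and $\gamma(A)$ plainly depends neither on $R$ nor on $y$.
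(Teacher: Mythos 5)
Your argument is correct, but it takes a genuinely different route from the paper's. The paper never works with the sharp truncation $\vphi\chi_{\Omega_y}$ directly: it composes $\vphi$ with a smooth convex cutoff $g_\delta$ (with $g_\delta(t)=0$ for $t\le\delta$, $g_\delta(t)=t-2\delta$ for $t\ge 3\delta$), so that $\vphi_\delta=(g_\delta\circ\vphi)\chi_{\Omega_y}$ is automatically smooth with compact support in $\Omega_y$ and satisfies the \emph{pointwise} inequality $L\vphi_\delta\le f$ with $\|f\|_{L^n(B^y_R)}=O(\delta)$ (convexity of $g$ kills the second-derivative term via positivity of $a^{ij}$). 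It then applies the local maximum principle (Gilbarg--Trudinger, Theorem~9.20) with the $L^1$ average on the right-hand side and lets $\delta\to0$, obtaining $\sup_{\Omega_y\cap B^y_{R/2}}\vphi\le C_1\bigl(|\Omega_y|/|B^y_R|\bigr)\sup_{\Omega_y}\vphi$ and hence $\gamma(A)=1/(C_1A)$. This smoothing device is precisely what circumvents the step you yourself flag as the most delicate one, namely justifying via Sard-type approximation and an integration by parts over the (possibly singular) nodal boundary that $\vphi\chi_\Omega$ is a weak $W^{1,2}_{\mathrm{loc}}$ subsolution; your route is workable but requires that extra care. The second difference is quantitative: by invoking the De Giorgi--Nash--Moser bound with the $L^2$ norm you get $\sup_{\Omega\cap B_{1/2}}\vphi\le C_0|\Omega|^{1/2}\sup_\Omega\vphi$ and thus $\gamma(A)\sim A^{-2}$, whereas the paper's $L^1$ version gives the linear (and sharper) dependence $\gamma(A)\sim A^{-1}$. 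This loss is harmless for the application, since in the proof of Theorem~\ref{thm:rapid-growth-1st-version} the lemma is only used with the fixed value $A=10\me/9$, so $\gamma(A)$ is an absolute constant either way; still, if you wanted the linear dependence you could simply take $p$ in Theorem~8.17 close to $1$ or use the $L^1$ form of the local maximum principle as the paper does. Your rescaling reduction to $R=1$ is fine (the zeroth-order coefficient only improves, becoming $\eps R^2q$), and the choice $\gamma(A)=(C_0A)^{-2}|B_1|^{-1}$ indeed depends on neither $R$ nor $y$.
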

We give a proof of this lemma in Section~\ref{sec:growth-lemma}.
As a corollary we obtain
Theorem~\ref{thm:rapid-growth-1st-version}:

\begin{proof}[Proof of Theorem~\ref{thm:rapid-growth-1st-version}]
We notice that $\Omega$ touches $\partial B_1$. Otherwise, since $\eps<\eps_0$
the maximum principle (\cite[Cor. 3.8]{GilTru83}) tells us that $\vphi$ is
identically $0$ in $\Omega$, which contradicts the definition of $\Omega$.

We decompose $B_1\setminus B_{1/2}$ into $N$ equally distanced
spherical layers, where $N$ will be chosen below.
Let $t_k = (1/2+k/(2N))$, $k=0\ldots N$.
Let $A_0 = B_{1/2}$, and  $A_k = B_{t_k}\setminus B_{t_{k-1}}$
for $k=1\ldots N$.
Set  $\wtld{A_0} = A_0\cup A_1$, $\wtld{A_N} = A_{N-1}\cup A_N$,
and
  $$\wtld{A_k} = A_{k-1}\cup A_k\cup A_{k+1}$$
for $1\leq k\leq N-1$.
There exist $\geq N/2$ values of $1 \leq k\leq N$ for which
$$|\Omega\cap \wtld{A_k}|\leq 6|\Omega|/ N\ .$$
Let $l$ be any one of these values.
Let $y\in \Omega\cap A_l$, and let $R=1/(2N)$.
Consider the ball $B^y_R=B(y, R)$.
$B^y_R\subseteq \wtld{A_l}$, and we check that
\begin{equation}
\label{growth-lemma-start}
\frac{|\Omega\cap B^y_R|}{|B^y_R|} \leq
\frac{|\Omega\cap \wtld{A_l}|}{|B_1|(1/(2N))^{n}}
\leq \frac{6|\Omega|(2N)^n}{N|B_1|}\leq 12(2\eta N)^{n-1} \ .
\end{equation}

Set $A=10\me/9$ and take
$N =\lfloor((\gamma(A)/12)^{1/(n-1)}/(2\eta))\rfloor$.
Inequality~(\ref{growth-lemma-start}) and Lemma~\ref{lem:growth-lemma}
applied in $B^y_R$ show that
$$\sup_{\Omega\cap \wtld{A_l}} \vphi \geq
\sup_{\Omega\cap B^y_{R}} \vphi \geq A\vphi(y)\ .$$
Since this is true for all $y\in \Omega\cap A_l$, we get
\begin{equation}
\label{almost-recursion}
  \sup_{\Omega\cap B_{t_{l+1}}}\vphi 
\geq \sup_{\Omega\cap \wtld{A_l}} \vphi \geq
A\sup_{\Omega\cap A_l}\vphi\ .
\end{equation}
Now we apply the following maximum principle:
\begin{thm}[{\cite[Corollary~3.8]{GilTru83}}]
Let $\vphi$ satisfy~(\ref{solution}). Then
$$\sup_{\Omega\cap A_k} \vphi \geq 0.9\sup_{\Omega\cap B_{t_k}}\vphi\ ,$$
whenever $\eps<\eps_0$.
\end{thm}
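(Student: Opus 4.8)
The plan is to reduce the inequality to the \emph{classical} weak maximum principle for a uniformly elliptic operator \emph{without} zeroth order term, via the standard device of dividing $\vphi$ by a positive supersolution (barrier) $w$ of $L$ whose oscillation on $\overline{B_1}$ is strictly below $10/9$. It is precisely in the construction of such a $w$ that the smallness hypothesis $\eps<\eps_0$ gets consumed. Throughout, $\Omega$ is the fixed connected component of $\{\vphi>0\}$, so $\vphi>0$ on $\Omega$.

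First I would rewrite $L$ in non-divergence form, $Lu=-a^{ij}\partial_{ij}u-b^j\partial_j u-\eps q\,u$ with $b^j:=\partial_i a^{ij}$, so that $|b^j|\le K$ and $|\eps q|\le\eps K$ by~(\ref{bounds}); put $\wtld{L}u:=a^{ij}\partial_{ij}u+b^j\partial_j u+\eps q\,u$, hence $\wtld{L}\vphi=0$. Set $\mu:=\tfrac{1+K}{\kappa_1}$, $M:=10\,\me^{2\mu}$, and
$$w(x):=M-\me^{\mu(x_1+1)}\ ,\qquad x\in\overline{B_1}\ .$$
On $\overline{B_1}$ one has $x_1+1\in[0,2]$, so $w$ is positive, with $w_{\min}=9\,\me^{2\mu}$, $w_{\max}=M-1$, and $w_{\max}/w_{\min}<10/9$. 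Since constants are annihilated by the second order part and $w$ depends only on $x_1$,
$$\wtld{L}w=-\mu\,\me^{\mu(x_1+1)}\bigl(a^{11}\mu+b^1\bigr)+\eps q\,w\ \le\ -\mu\,\me^{\mu(x_1+1)}+\eps K M\ \le\ -\mu+\eps K M\ ,$$
using ellipticity~(\ref{ellipticity}) (with $\xi=e_1$, $a^{11}\ge\kappa_1$) and $|b^1|\le K$ to get $a^{11}\mu+b^1\ge\kappa_1\mu-K=1$, together with $\me^{\mu(x_1+1)}\ge1$. Hence $\wtld{L}w\le0$ on $\overline{B_1}$ once $\eps_0$ is taken small enough in terms of $\kappa_1$ and $K$ only (through $\mu,M$), e.g. $\eps_0\le\mu/(KM)$.

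Next, with $D:=\Omega\cap B_{t_k}$ and $v:=\vphi/w$, I would use $v\in C^2(D)\cap C^0(\overline D)$ (interior regularity and continuity of $\vphi$), expand $\wtld{L}(vw)=0$, and divide by $w$ to get
$$a^{ij}\partial_{ij}v+\wtld{b}^{\,i}\partial_i v\ =\ -\frac{v}{w}\,\wtld{L}w\ \ge\ 0\quad\text{in }D\ ,\qquad \wtld{b}^{\,i}:=b^i+\frac{2a^{ij}\partial_j w}{w}\ ,$$
where the inequality is because $v=\vphi/w>0$ on $D$ and $\wtld{L}w\le0$, and $\wtld{b}^{\,i}$ is bounded on $\overline D$ since $w\in C^\infty$ and $w\ge w_{\min}>0$. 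This is a uniformly elliptic inequality with no zeroth order term, so the weak maximum principle (\cite[Theorem~3.1]{GilTru83}) gives $\sup_D v\le\sup_{\partial D}v$. Then I would observe $\partial D\subseteq(\partial\Omega\cap\overline{B_{t_k}})\cup(\overline{\Omega}\cap\partial B_{t_k})$: on the first set, any point interior to $B_1$ satisfies $\vphi=0$, being on the boundary of the component $\Omega$ of $\{\vphi>0\}$; on the second set, and at any remaining point where $\vphi\neq0$ (which can only sit on $\partial B_1$), continuity forces the point into $\overline{\Omega\cap A_k}$, because $A_k$ is the outermost shell of $B_{t_k}$ so every neighbourhood of $\partial B_{t_k}$ meets $A_k$. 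Hence $\sup_{\partial D}\vphi\le\sup_{\Omega\cap A_k}\vphi$, so $\sup_{\partial D}v\le(\sup_{\Omega\cap A_k}\vphi)/w_{\min}$, and for every $x\in D$,
$$\vphi(x)=v(x)w(x)\ \le\ \bigl(\sup_D v\bigr)w_{\max}\ \le\ \frac{w_{\max}}{w_{\min}}\,\sup_{\Omega\cap A_k}\vphi\ <\ \frac{10}{9}\,\sup_{\Omega\cap A_k}\vphi\ .$$
Taking the supremum over $x\in D$ and rearranging yields $\sup_{\Omega\cap A_k}\vphi\ge0.9\,\sup_{\Omega\cap B_{t_k}}\vphi$.

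The main obstacle is the barrier construction with oscillation below $10/9$: the constant $M$ must be large enough both to keep $w$ positive and to dominate the possibly large first order coefficient $b^j=\partial_i a^{ij}$ coming from the divergence form of $L$, yet this same largeness amplifies the term $\eps q\,w$ that threatens the supersolution inequality $\wtld{L}w\le0$ — which is exactly what forces $\eps_0$ to be small in terms of $\kappa_1$ and $K$ rather than the ellipticity constants alone. Once the barrier is in place, the remainder is the routine zeroth-order-free maximum principle together with the elementary facts that $\vphi$ vanishes on the interior part of $\partial\Omega$ and that the outer sphere $\partial B_{t_k}$ lies in $\overline{A_k}$.
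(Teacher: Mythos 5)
Your proof is correct. The paper offers no argument here at all --- it simply quotes the statement as Corollary~3.8 of Gilbarg--Trudinger --- so what you have done is reconstruct, in a self-contained way, the content of that cited result: the generalized maximum principle for an operator whose zeroth-order coefficient has no sign, obtained by dividing by a positive supersolution $w$ and invoking the weak maximum principle (GT Theorem~3.1) for the resulting inequality with no zeroth-order term. This is essentially how Gilbarg--Trudinger themselves prove it (their barrier is the same exponential $\me^{\alpha x_1}$), and your quantitative choice of $M$ so that $w_{\max}/w_{\min}<10/9$ is exactly what is needed to extract the explicit constant $0.9$, with the smallness of $\eps_0$ consumed where you say it is. Two cosmetic points: $b^j=\partial_i a^{ij}$ is a sum over $i$, so the bound from~(\ref{bounds}) is $|b^j|\le nK$ rather than $K$ (take $\mu=(1+nK)/\kappa_1$ and nothing else changes); and when $t_k=1$ the set $\overline{D}$ meets $\partial B_1$, where one should either assume $\vphi\in C^0(\overline{B_1})$ (true for the rescaled eigenfunction, which lives on a slightly larger ball) or exhaust by $B_{1-\delta}$ --- a technicality the paper's citation glosses over as well. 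Your handling of $\partial D$, splitting it into the interior part of $\partial\Omega$ (where $\vphi=0$) and the part near $\partial B_{t_k}$ (absorbed into $\overline{\Omega\cap A_k}$ by continuity), is sound.
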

Hence, from~(\ref{almost-recursion}) we obtain
 $$ \sup_{\Omega\cap B_{t_{l+1}}}\vphi
    \geq 0.9 A\sup_{\Omega\cap B_{t_l}}\vphi =
    \me \sup_{\Omega\cap B_{t_l}}\vphi\ .$$
And since this is true for $\geq N/2$ values of $k$ we finally have
$$ \frac{\sup_{\Omega} \vphi}{\sup_{\Omega\cap B_{1/2}}\vphi} \geq
\me^{N/2}
\geq \me^{C_1/\eta}\ .$$
\end{proof} 

An iteration of Theorem~\ref{thm:rapid-growth-1st-version} gives
Theorem~\ref{thm:rapid-growth}:
\begin{proof}[Proof of Theorem~\ref{thm:rapid-growth}]
Let $N$ be a positive integer for which $(1/2)^{N+1}<r_0 \leq (1/2)^N$.
$N=\lfloor\log(1/r_0)/\log 2\rfloor$.
Set $t_k = (1/2)^k$. It follows by scaling from
Theorem~\ref{thm:rapid-growth-1st-version} that
$$\frac{\sup_{\Omega\cap B_{t_k}} |\vphi|}
{\sup_{\Omega\cap B_{t_{k+1}}} |\vphi|} \geq
\me^{C_1/\eta}\ ,$$
for all $0\leq k \leq N-1$.
The point is that the bounds~(\ref{ellipticity})-(\ref{bounds})
on the operator $L$ remain true
after rescaling.
Hence,
\begin{equation*}
  \frac{\sup_{\Omega} |\vphi|}{\sup_{\Omega\cap B_{r_0}} |\vphi|}
\geq
\frac{\sup_{\Omega} |\vphi|}{\sup_{\Omega\cap B_{t_N}} |\vphi|} \geq
\me^{C_1 N/\eta} 
\geq \me^{C_1\log(1/r_0)/(2\eta\log 2)} =
(1/r_0)^{C_2/\eta}\ .
\end{equation*}
\end{proof}

\subsection{Proof of the Growth Lemma}
\label{sec:growth-lemma}
Its proof is based on ideas
from the proof in~\cite{DonFef90},
where we replaced several arguments by more elementary ones.

\begin{proof}[Proof of Lemma~\ref{lem:growth-lemma}]
Let $g(t)$ be a smooth function defined on $\Rb$
with the following properties
\begin{itemize}
\item $g(t) = 0$ for $t\leq 1$,
\item $g(t) = t-2$ for $t\geq 3$,
\item $g''(t) \geq 0$.
\end{itemize}
Let $\delta>0$ be small and
let $g_\delta(t) = \delta g(t/\delta)$.
Let $\vphi_\delta = (g_\delta\circ \vphi) \cdot \chi_{\Omega_y}$,
where $\chi_{\Omega_y}$ is the characteristic function of $\Omega_y$.
$\vphi_\delta$ is a smooth function in $B^y_R$ with 
compact support in $\Omega_y$.
We notice that $0\leq\vphi\chi_{\Omega_y}-\vphi_\delta\leq 2\delta$.
In particular, $\vphi_\delta\to\vphi\chi_{\Omega_y}$
uniformly as $\delta\to 0$.
We now calculate $L\vphi_\delta$:
\begin{multline}
\label{almost-subharmonic}
  L\vphi_\delta = \chi_{\Omega_y}\eps q \vphi\cdot(g_\delta'\circ\vphi)
-\eps q \vphi_\delta-\chi_{\Omega_y}a^{ij}\partial_i\vphi\partial_j
\vphi\cdot(g_\delta''\circ\vphi) \\ \leq
  \chi_{\Omega_y}\eps q\vphi\cdot(g_\delta'\circ\vphi)
-\eps q \vphi_\delta\ .
\end{multline}
The last inequality is true due
to the fact that $a^{ij}$ is positive definite and
$g$ is convex. Let us denote the last expression in~(\ref{almost-subharmonic})
by $f$.
\begin{lem}
\label{lem:ln-norm}
  $$\|f\|_{L^n(B^y_R)} \leq 3\eps_0 \delta K |\Omega_y|^{1/n}$$
\end{lem}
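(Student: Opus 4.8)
The plan is to read off the required bound directly from the formula for $f$, using only the convexity of $g$. Since $\vphi_\delta = (g_\delta\circ\vphi)\cdot\chi_{\Omega_y}$, the function $f$ appearing on the right of~(\ref{almost-subharmonic}) can be rewritten as
$$f \;=\; \eps\, q\,\chi_{\Omega_y}\cdot\bigl(\vphi\,(g_\delta'\circ\vphi)\;-\;g_\delta\circ\vphi\bigr)\ .$$
In particular $f$ is supported in $\Omega_y\subseteq B^y_R$, so it suffices to prove the pointwise estimate $|f|\leq 2\eps_0 K\delta$ on $\Omega_y$; then $\|f\|_{L^n(B^y_R)}^n=\int_{\Omega_y}|f|^n\leq (2\eps_0 K\delta)^n|\Omega_y|$ gives even slightly more than claimed.

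Since $\eps<\eps_0$, $|q|\leq K$, and $\vphi>0$ on $\Omega_y$, the pointwise bound reduces to the elementary inequality
$$0\;\leq\; t\,g_\delta'(t)-g_\delta(t)\;\leq\; 2\delta \qquad (t\geq 0)\ .$$
I would prove this as follows. Put $h(s):=s\,g'(s)-g(s)$; then $h(0)=0$ because $g$ vanishes near $0$, and $h'(s)=s\,g''(s)\geq 0$ by the convexity hypothesis $g''\geq 0$, so $h$ is nondecreasing on $[0,\infty)$. For $s\geq 3$ one has $g(s)=s-2$ and $g'(s)=1$, hence $h(s)=2$; therefore $0\leq h\leq 2$ on all of $[0,\infty)$. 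Finally $g_\delta(t)=\delta\,g(t/\delta)$ and $g_\delta'(t)=g'(t/\delta)$ yield $t\,g_\delta'(t)-g_\delta(t)=\delta\,h(t/\delta)\in[0,2\delta]$. (Equivalently one may integrate: $t\,g_\delta'(t)-g_\delta(t)=\int_0^t\bigl(g_\delta'(t)-g_\delta'(s)\bigr)\,ds$, and since $g_\delta'$ is nondecreasing, vanishes for $s\leq\delta$, and equals $1$ for $s\geq 3\delta$, the integrand is nonnegative and supported in $[\delta,3\delta]$, which gives the bound $2\delta$.)

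Putting the two steps together, $|f|=\eps|q|\,\chi_{\Omega_y}\,\bigl|\vphi g_\delta'(\vphi)-g_\delta(\vphi)\bigr|\leq 2\eps_0 K\delta$ on $\Omega_y$, and integrating over $\Omega_y$ gives $\|f\|_{L^n(B^y_R)}\leq 2\eps_0\delta K|\Omega_y|^{1/n}\leq 3\eps_0\delta K|\Omega_y|^{1/n}$. There is no serious obstacle here; the only point worth isolating is that although $\vphi\,g_\delta'(\vphi)$ by itself is unbounded, subtracting $g_\delta(\vphi)$ — which is precisely what the term $-\eps q\vphi_\delta$ in~(\ref{almost-subharmonic}) contributes once the favorable second-order term has been discarded — leaves a difference bounded by $2\delta$ uniformly in $t$, thanks to the convexity of $g$.
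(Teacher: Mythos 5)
Your proof is correct and follows essentially the same route as the paper: a pointwise bound $|f|\leq C\eps_0 K\delta$ on $\Omega_y$ (coming from controlling $t\,g_\delta'(t)-g_\delta(t)$) followed by integration over the support $\Omega_y$. Your use of the monotonicity of $h(s)=sg'(s)-g(s)$ is a slightly cleaner way to get the pointwise bound than the paper's two-sided estimates $t-2\delta\leq g_\delta(t)\leq t$, $0\leq g_\delta'\leq 1$, and it even yields the marginally better constant $2$ in place of $3$.
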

We postpone the proof of this lemma to the end of this section.
Recall the
following local maximum principle:
\begin{thm}[{\cite[Theorem 9.20]{GilTru83}}]
Suppose $Lu\leq f$ in $B^y_R\subseteq B_1$. Then,
$$\sup_{B^y_{R/2}} u
 \leq \frac{C_1}{|B^y_R|}\int_{B^y_R} u^+ \dx + C_2\|f\|_{L^n(B^y_R)}\ ,$$
where $C_1, C_2$ depend only on $\kappa_1, \kappa_2$ and $K$.
\end{thm}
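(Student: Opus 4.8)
I would prove this classical local maximum principle by Moser's iteration, the operator $L$ being a divergence-form uniformly elliptic operator whose zeroth-order coefficient $-\eps q$ is bounded by $K$ and whose first-order part vanishes. First I would rewrite $Lu\leq f$ in weak form: $u\in W^{1,2}(B^y_R)$ and $\int a^{ij}\partial_j u\,\partial_i\psi\dx\leq\int(f+\eps q u)\psi\dx$ for every nonnegative $\psi\in W^{1,2}_0(B^y_R)$. Since the asserted inequality is invariant under the dilation $x\mapsto y+Rx$ — the ellipticity and $L^\infty$-bounds are preserved and both sides scale homogeneously — I may assume $y=0$ and $R=1$. I would then fix $k:=\|f\|_{L^n(B_1)}$ (if $k=0$, replace it by $\delta>0$ and let $\delta\to0$ at the end) and pass to $\bar u:=u^+ + k\geq k>0$: on $\{u>0\}$ one has $f+\eps q u\leq(|f|/k+K)\bar u=:b\,\bar u$ with $b\geq0$ and $\|b\|_{L^n(B_1)}\leq 1+K|B_1|^{1/n}=:\Lambda$, so that $\bar u$ is a weak subsolution of $-\partial_i(a^{ij}\partial_j\bar u)\leq b\,\bar u$.

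The core step is a reverse-H\"older (gain-of-integrability) inequality. Testing the weak inequality with $\psi=\eta^2 G(\bar u)$, where $\eta$ is a cutoff and $G$ a primitive of a suitably truncated power of $\bar u$, and then sending the truncation level to infinity, I would obtain for each $p\geq2$ and concentric balls $B_{r'}\subset B_r\subseteq B_1$ a Caccioppoli estimate of the shape
\begin{equation*}
\int_{B_r}\bigl|\nabla(\eta\,\bar u^{p/2})\bigr|^2\dx\ \leq\ C\,p^2\!\left(\int_{B_r}|\nabla\eta|^2\,\bar u^{p}\dx+\int_{B_r} b\,\eta^2\,\bar u^{p}\dx\right),\qquad C=C(\kappa_1,\kappa_2).
\end{equation*}
The one genuinely delicate point is to absorb the term containing $b$ \emph{without} any smallness hypothesis on $\Lambda$. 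Since $b\in L^n$ and $n>n/2$, I would bound $\int b\,\eta^2\bar u^{p}\dx$ by H\"older, interpolate the norm $L^{2n/(n-1)}$ between $L^{2}$ and $L^{2n/(n-2)}$, pass via the Sobolev inequality to $\|\nabla(\eta\bar u^{p/2})\|_{L^2}$, and then use Young's inequality to absorb that top-order contribution into the left-hand side, at the price of a constant growing only polynomially in $p$ and in $\Lambda$. One further application of the Sobolev inequality on the left then yields
\begin{equation*}
\|\bar u\|_{L^{\chi p}(B_{r'})}\ \leq\ \left(\frac{C\,p^{4}\,\Lambda'}{(r-r')^2}\right)^{1/p}\|\bar u\|_{L^p(B_r)},\qquad \chi:=\frac{n}{n-2}>1,
\end{equation*}
with $\Lambda'$ depending only on $n,\kappa_1,\kappa_2,K$; for $n=2$ one uses the embedding $W^{1,2}\hookrightarrow L^q$ (valid for all finite $q$) in place of the Sobolev inequality, and any fixed $\chi>1$ works.

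Finally I would iterate: with $p_j=2\chi^{j}$ and radii $r_j=\tfrac12+2^{-(j+1)}\downarrow\tfrac12$, the product of the constants converges because $\sum_j p_j^{-1}<\infty$ while $\log(Cp_j^{4}\Lambda'4^{j})$ grows only linearly in $j$, and in the limit $\sup_{B_{1/2}}\bar u\leq C\|\bar u\|_{L^2(B_1)}$ with $C=C(n,\kappa_1,\kappa_2,K)$. To replace the $L^2$-norm by the $L^1$-norm I would rerun the same iteration between arbitrary radii $\tfrac12\leq\sigma<\tau\leq1$, obtaining $\sup_{B_\sigma}\bar u\leq C(\tau-\sigma)^{-n/2}\|\bar u\|_{L^2(B_\tau)}$ (the required a priori finiteness of $\sup_{B_\sigma}\bar u$, $\sigma<1$, coming from the interior $L^2$-estimate), then bound $\|\bar u\|_{L^2(B_\tau)}^2\leq(\sup_{B_\tau}\bar u)\int_{B_1}\bar u\dx$, apply Young's inequality to hide a $\tfrac12\sup_{B_\tau}\bar u$ term, and invoke the standard iteration lemma for functions obeying such a self-improving bound to get $\sup_{B_{1/2}}\bar u\leq C\int_{B_1}\bar u\dx$. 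Since $\bar u=u^+ + k$ and $k=\|f\|_{L^n(B_1)}$, this is $\sup_{B_{1/2}}u\leq C\int_{B_1}u^+\dx+C|B_1|\,\|f\|_{L^n(B_1)}$, i.e.\ the assertion (with $C_1,C_2$ depending only on $n,\kappa_1,\kappa_2,K$) once the dilation is undone. I expect the main obstacle to be precisely the absorption step in the second paragraph — exploiting the strict gap between the $L^n$-integrability of $f$ and the critical exponent $L^{n/2}$ while tracking the $p$-dependence of the constants closely enough that the geometric iteration still converges — together with the somewhat fiddly passage from the $L^2$- to the $L^1$-norm, which requires the interpolation/hole-filling device rather than a naive iteration down to a small exponent.
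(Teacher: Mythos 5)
The paper does not prove this theorem; it is quoted as a black-box reference to Theorem~9.20 of Gilbarg--Trudinger. Your Moser-iteration argument is correct and self-contained, but it is worth noting that it is not the argument behind the cited theorem. GT~Theorem~9.20 lives in the chapter on \emph{strong} solutions of \emph{non-divergence-form} operators $a^{ij}D_{ij}u+b^iD_iu+cu$ and is obtained from the Aleksandrov--Bakelman--Pucci maximum principle together with a cutoff; it applies here because the leading coefficients are $C^1$, so $-\partial_i(a^{ij}\partial_j u)$ can be expanded as $-a^{ij}\partial_{ij}u-(\partial_ia^{ij})\partial_j u$ with bounded first-order coefficients. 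What you have instead proved is the divergence-form analogue (GT~Theorem~8.17), by the De~Giorgi--Nash--Moser route: exploit the divergence structure directly, get a Caccioppoli-type reverse H\"older inequality with $p$-polynomial constants, iterate, and then pass from $L^2$ to $L^1$ by the standard interpolation/hole-filling lemma. Both routes are legitimate; the Moser route is arguably the more natural one for the operator as written and it does not touch the $C^1$-regularity of $a^{ij}$ at all, whereas the ABP route needs that regularity to convert to non-divergence form but yields a cleaner $R$-dependence. One small inaccuracy in your reduction: under $x\mapsto y+Rx$ the two sides do \emph{not} scale identically --- the transformed right-hand side has $L^n$-norm $R\|f\|_{L^n(B^y_R)}$, so the rescaled estimate actually yields $C_2 R\|f\|_{L^n(B^y_R)}$. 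Since $R\le 1$ this is stronger than what is claimed, so the reduction to $y=0,\ R=1$ is still valid, but the justification should be "the scaling can only improve the inequality" rather than "both sides scale homogeneously."
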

Applying the local maximum principle to $\vphi_\delta$ gives
$$\sup_{B^y_{R/2}} \vphi_\delta \leq \frac{C_1}{|B^y_R|}\int_{B^y_R}
\vphi_\delta\dx +
3C_2\delta\eps_0 K |\Omega_y|^{1/n} \ .$$
Letting $\delta\to0$ we obtain that
$$\sup_{\Omega_y\cap B^y_{R/2}}\vphi\leq
\frac{C_1}{|B^y_R|}\int_{\Omega_y} \vphi \dx
\leq \frac{C_1|\Omega_y|}{|B^y_R|} \sup_{\Omega_y} \vphi\ . $$
Thus, we may take $\gamma(A) = 1/(C_1 A)$.
\end{proof}
To complete the proof of the Growth Lemma~\ref{lem:growth-lemma}
it remains to prove Lemma~\ref{lem:ln-norm}.
\begin{proof}[Proof of Lemma~\ref{lem:ln-norm}]
We notice that $t-2\delta\leq g_\delta(t)\leq t$
and $0\leq g_\delta'(t) \leq 1$.
Hence,
$f\leq \eps q\vphi\chi_{\Omega_y}
- \eps q (\vphi-2\delta)\chi_{\Omega_y} = 2 \eps \delta q\chi_{\Omega_y}$.
When $\vphi \geq 3\delta$, $f=2\eps\delta q\chi_{\Omega_y}$,
and when $\vphi\leq 3\delta$, $f\geq -\eps q \vphi \chi_{\Omega_y}
\geq -3\eps q \delta\chi_{\Omega_y}$.
We have shown that $|f| \leq 3\eps_0 \delta q\chi_{\Omega_y}$.
Integration gives
$$\|f\|_{L^n(B^y_R)} \leq 3 \eps_0 \delta K |\Omega_y|^{1/n}.$$
\end{proof}

\section{Dimension Two - $C^{\infty}$ case}
\label{sec:dimension2}
In dimension two we can use the techniques in~\cite{NazPolSod05}
in order to reduce the analysis to the case of harmonic functions.

\subsection{An estimate for harmonic functions}
Let $\vphi$ be a harmonic function in the unit ball $B_1\subseteq\Rb^2$.
Let $\Omega$ be a connected component of $\{\vphi\neq 0\}$.
Suppose $\Omega\cap B_{1/2}\neq\emptyset$.
Theorem~\ref{thm:local-courant-analytic}' shows
$$|\Omega|/|B_1|>C/\beta_{\rho_0}'(\vphi)\log\beta_{\rho_0}'(\vphi)$$
for some $0<\rho_0<1$.
We will show that in fact,
\begin{thm}
\label{thm:dim2-harmonic}
$$|\Omega|/|B_1| \geq C/\beta_{1/2}'(\vphi; B_1)\ .$$
\end{thm}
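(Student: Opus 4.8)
The plan is to upgrade the two analytic ingredients behind Theorem~\ref{thm:local-courant-analytic}' — Rapid Growth in Narrow Domains and Propagation of Smallness — to their sharp, conformally invariant planar counterparts, so that the balancing parameter $\eps$ (and with it the spurious $\log\beta$) can be discarded. Normalise $\sup_{B_1}|\vphi|=1$, assume without loss of generality $\vphi>0$ on $\Omega$, and set $\delta:=|\Omega|/|B_1|$; if $\delta$ exceeds a fixed small constant the claim is trivial since $\beta_{1/2}'\ge3$, so assume $\delta$ small. As in the proof of Theorem~\ref{thm:rapid-growth-1st-version}, the maximum principle forces $\Omega$ to touch $\bdry B_1$, so $\Omega$ contains a point $z_0$ with $|z_0|<1/2$ together with a path from $z_0$ out to $\bdry B_1$; in particular $\Omega$ meets the circle $\{|z-z_0|=r\}$ for every $0<r<1-|z_0|$. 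Note that $\vphi$ is a positive harmonic function on $\Omega$ vanishing on $\bdry\Omega\cap B_1$.

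First I would replace Theorem~\ref{thm:rapid-growth} by Carleman's length--area estimate. Writing $\theta(r)$ for the angular measure of $\{\vartheta:\ z_0+re^{\mi\vartheta}\in\Omega\}$, one has, for $0<r_1<r_2<1-|z_0|$,
$$\sup_{\Omega\cap B(z_0,r_1)}\vphi\ \le\ \sup_{\Omega\cap B(z_0,r_2)}\vphi\cdot\exp\!\Big(-\pi\!\int_{r_1}^{r_2}\frac{dr}{r\,\theta(r)}\Big).$$
The crucial point — and the source of the gain over Theorem~\ref{thm:local-courant-analytic}' — is that Cauchy--Schwarz with the constant weight, $(r_2-r_1)^2\le\big(\int_{r_1}^{r_2}\tfrac{dr}{r\theta(r)}\big)\big(\int_{r_1}^{r_2}r\,\theta(r)\,dr\big)\le\big(\int_{r_1}^{r_2}\tfrac{dr}{r\theta(r)}\big)\,|\Omega|$, converts this into $\sup_{\Omega\cap B(z_0,r_1)}\vphi\le\exp\!\big(-\pi(r_2-r_1)^2/|\Omega|\big)$ for all $r_1<r_2<1-|z_0|$, with no auxiliary exponent. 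Letting $r_2\to1-|z_0|\ (>1/2)$ and taking any $r_1<1/4$ gives $\sup_{\Omega\cap B(z_0,1/4)}\vphi\le e^{-c_1/\delta}$; in particular $|\vphi|\le e^{-c_1/\delta}$ on the ball $B(z_0,d_0)\subseteq\Omega$, where $d_0:=\dist(z_0,\bdry\Omega)\le\sqrt\delta$.

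Second, I would propagate this smallness off $\Omega$ and onto a genuine ball. Since $|\vphi|\le e^{-c_1/\delta}$ on the ball $B(z_0,d_0)$ while $|\vphi|\le1$ on $B_1$, the classical Hadamard three-circles theorem — i.e.\ the logarithmic convexity in $\log r$ of $r\mapsto\sup_{B(z_0,r)}|\vphi|$, which holds because $|\vphi|$ is subharmonic — gives $\sup_{B(z_0,\sigma)}|\vphi|\le e^{-c_2/\delta}$ for a definite radius $\sigma<1-|z_0|$ (interpolating between the radii $d_0$ and $1-|z_0|$, iterating the interpolation a bounded number of times to reach a fixed $\sigma$). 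Then $\sup_{B_\sigma}|\vphi|\le e^{-c_2/\delta}$, and a final use of the logarithmic convexity of $r\mapsto\sup_{B_r}|\vphi|$ centred at the origin (comparing the radii $\sigma<1/2<1$) yields $\beta_{1/2}(\vphi;B_1)=\log\big(1/\sup_{B_{1/2}}|\vphi|\big)\ge c_3/\delta$, which gives the assertion.

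The hard part is exactly the propagation in the previous paragraph: carrying the exponential bound $e^{-c/\delta}$ across a fixed distance while the set on which $\vphi$ is known to be that small is only the channel $\Omega\cap B(z_0,1/4)$ — of measure $O(\delta)$ — or the ball $B(z_0,d_0)$ whose radius may be as small as $\sqrt\delta$, and whose centre $z_0$ may lie near $\bdry B_{1/2}$. Hadamard interpolation (equivalently Nadirashvili's generalised three-circles theorem applied to a set of small measure) loses a factor comparable to $\log(1/\delta)$ in the exponent, which only reproduces the weaker bound of Theorem~\ref{thm:local-courant}. Removing this loss must exploit more than the mere area of $\Omega$ near $z_0$ — for instance that a narrow component meeting $B_{1/2}$ forces $|\vphi|$ itself, not merely $\vphi|_\Omega$, to be small on a whole neighbourhood of $z_0$, via the structure of the nodal set of a harmonic function together with a Remez/Cartan-type estimate; this is where genuinely two-dimensional complex analysis enters. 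As an alternative one would start from Theorem~\ref{thm:local-courant-analytic}' (which already yields everything but the logarithm) and feed the sharp Carleman estimate above directly into that argument in place of Theorem~\ref{thm:rapid-growth}, arranging matters so that $\eps$ may be let to $0$.
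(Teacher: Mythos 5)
Your first step is sound but is not where the difficulty lies: in dimension two the paper's Theorem~\ref{thm:rapid-growth-1st-version} already gives the sharp bound $\sup_{\Omega}\vphi/\sup_{\Omega\cap B_{1/2}}\vphi\geq \me^{c/\delta}$ with $\delta=|\Omega|/|B_1|$ (since $\eta^{n-1}=\eta$ when $n=2$), so the Carleman length--area substitute, while correct, buys nothing new. The genuine gap is exactly the one you flag yourself: the propagation of the bound $\me^{-c/\delta}$ from the thin set $\Omega\cap B(z_0,1/4)$ (or from a ball $B(z_0,d_0)\subseteq\Omega$ of radius possibly as small as $\sqrt\delta$ or smaller) to a fixed ball. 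Hadamard/Nadirashvili interpolation from such a set costs a factor $\log(1/\delta)$ in the exponent and only reproduces the $\beta\log\beta$ bound of Theorem~1.3$'$; your proposed remedies (a Remez/Cartan estimate, ``structure of the nodal set,'' or letting $\eps\to0$ in Theorem~1.3$''$) are not developed, and the last one cannot work as stated because the $\eps$ there is precisely what pays for applying Theorem~\ref{thm:nad-had} to a set of measure $\delta$. So the proposal is an honest reduction to the hard step, not a proof.

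The paper closes this gap by a different mechanism (Theorem~\ref{thm:eremenko}, due to Eremenko), which avoids propagating smallness of $\vphi$ from a set of small measure altogether. One sets $h=\vphi\chi_\Omega$ and shows $\beta_{3/4}(h;B_1)\leq C_1\beta_{1/2}(\vphi;B_1)+C_2$ with constants independent of $\Omega$. The trick is to pass to the gradient: at a point $z_t$ where $h$ attains its maximum on the circle $|z|=t$ one has $\partial_\theta h(z_t)=0$ and $0\leq\partial_r h(z_t)\leq M_h'(t)$, and logarithmic convexity of $M_h$ forces $M_h'(t)\lesssim \me^{-\beta_{3/4}(h;B_1)}$ for $t\in[1/2,2/3]$; hence $v=\log|\nabla\vphi|$, which is subharmonic, is very negative on the whole curve $\gamma=\{z_t:1/2\leq t\leq 2/3\}$. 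Since $\gamma$ has radial projection of definite length, the Beurling--Nevanlinna projection theorem bounds its harmonic measure from below by an absolute constant, and the two-constants theorem applied to $v$ transfers the smallness to a point $z_0\in B_{1/2}$ where $|\nabla\vphi|\geq \me^{-\beta_{1/2}(\vphi;B_1)+C}$, yielding the comparison with no logarithmic loss. This is the genuinely two-dimensional input you correctly anticipated would be needed, but your proposal does not supply it.
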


In order to see this we define
$h=\vphi$ in $\Omega$ and $h=0$ elsewhere in $B_1$. $h$
is a subharmonic function. The key is the
following proposition:
\begin{thm}[Eremenko]
\label{thm:eremenko}
$$\beta_{3/4}(h; B_1) \leq C_1\beta_{1/2}(\vphi; B_1)+ C_2\ .$$
\end{thm}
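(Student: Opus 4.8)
The plan is to argue by complex analysis after a few reductions. Since $h$ is subharmonic, $\varphi>0$ on $\Omega$ (the case $\varphi<0$ being symmetric), so $|h|=h$; normalising $\sup_{B_1}|\varphi|=1$ (if $\varphi$ is unbounded on $B_1$ the inequality is vacuous), the assertion becomes $\log\!\bigl(\sup_{B_1}h/\sup_{B_{3/4}}h\bigr)\le C_1\beta+C_2$, where $\beta:=\beta_{1/2}(\varphi;B_1)$. As $\sup_{B_1}h=\sup_\Omega\varphi\le\sup_{B_1}|\varphi|=1$, it is enough to prove the lower bound
\[
   m:=\sup_{\Omega\cap B_{3/4}}\varphi\ \geq\ c_0\,s^{\,C_1},\qquad s:=\sup_{B_{1/2}}|\varphi|=e^{-\beta}.
\]
I would first discard the trivial case in which $\mathcal M:=\sup_\Omega\varphi$ is essentially attained inside $B_{3/4}$: if some $z^\ast\in\Omega\cap B_{3/4}$ satisfies $\varphi(z^\ast)\ge\tfrac12\mathcal M$, then $m\ge\tfrac12\mathcal M$ and $\beta_{3/4}(h;B_1)\le\log 2$. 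So one may assume there is $z^\ast\in\Omega$ with $|z^\ast|\ge 3/4$ and $\varphi(z^\ast)\ge\tfrac12\mathcal M$. Note also that $\Omega$ must meet $\partial B_1$ (otherwise the maximum principle gives $\varphi\equiv0$ on $\Omega$), while $\Omega\cap B_{1/2}\neq\emptyset$ by hypothesis; fix $p\in\Omega\cap B_{1/2}$.

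Next, Harnack's inequality reduces everything to a geometric statement about $\Omega$. On the connected domain $\Omega$ the function $\varphi$ is positive and harmonic, so chaining Harnack balls gives, for every $z\in\Omega$,
\[
   \varphi(z)\ \geq\ \varphi(z^\ast)\,e^{-C\,d_\Omega(z,z^\ast)}\ \geq\ \tfrac12\mathcal M\,e^{-C\,d_\Omega(z,z^\ast)},
\]
where $d_\Omega$ is the quasi-hyperbolic metric of $\Omega$ and $C$ is absolute. Letting $z$ range over the non-empty set $\Omega\cap B_{3/4}$ yields
\[
   \frac{\mathcal M}{m}\ \leq\ 2\,e^{\,C L},\qquad L:=d_\Omega\bigl(z^\ast,\ \Omega\cap B_{3/4}\bigr),
\]
so the theorem follows once one shows $L\le C_1'\beta+C_2'$. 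In words: the quasi-hyperbolic ``neck length'' by which $\Omega$ escapes from $B_{3/4}$ to $\partial B_1$ is controlled by the growth of $\varphi$; $\Omega$ cannot have a neck in the annulus $\{3/4<|z|<1\}$ that is simultaneously long and thin unless $\varphi$ grows fast between $B_{1/2}$ and $B_1$. This is the planar converse of ``Rapid Growth in Narrow Domains'', and it is where the harmonicity of $\varphi$ (not just the subharmonicity of $h$) and the two--dimensional setting are used decisively.

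To prove $L\le C_1'\beta+C_2'$ I would run a propagation-of-smallness argument in the plane. Let $\Gamma:=\partial\Omega\cap\partial B_1$ be the ``outlet'' of $\Omega$ on the unit circle. On $W:=\Omega\cap\{3/4<|z|<1\}$ the function $\varphi$ is harmonic and $\le 1$, vanishes on the part of $\partial W$ lying on $\partial\Omega$, and is $\le m$ on the part lying on $\partial B_{3/4}$; by the maximum principle $\varphi\le m+\omega$ on $W$, with $\omega:=\omega(\cdot,\Gamma;W)$ the harmonic measure of $\Gamma$. Evaluating at $z^\ast$ shows the outlet must be ``wide'' ($\omega(z^\ast)\ge\tfrac14\mathcal M$, say, once $m<\tfrac14\mathcal M$). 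Going the other way, $\varphi\le\mathcal M\,\omega(\cdot,\Gamma;\Omega)\le\mathcal M\,\omega(\cdot,\Gamma;B_1)$ on $\Omega$ by comparison of harmonic measures, so $\varphi(p)\le s$ forces $\Gamma$ to be a very short arc, or $\Omega$ near it to be very thin. Feeding this dichotomy into the classical bound (via Fourier coefficients and the Borel--Carath\'eodory inequality) on the number of nodal domains of $\varphi$ crossing a circle $|z|=\rho$, $\rho<1$, in terms of the doubling index $\log\!\bigl(M_{|\varphi|}(\rho')/M_{|\varphi|}(\rho)\bigr)$ should yield the required bound $L\le C_1'\beta+C_2'$.

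I expect this last step to be the main obstacle. Turning the competition between ``$\Omega$ narrow near $\partial B_1$'' and ``$\varphi$ small on $B_{1/2}$'' into a \emph{linear} bound of $L$ by $\beta_{1/2}(\varphi;B_1)$, with no superfluous logarithmic factor, is precisely the point at which one needs a genuinely two--dimensional conformal / harmonic-measure argument, and it is Eremenko's contribution that replaces the logarithmically lossy estimate of~\cite{NazPolSod05} here; everything else above is reductions together with standard applications of Harnack's inequality and the maximum principle.
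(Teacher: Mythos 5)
Your reductions (the normalization, the Harnack chain giving $\mathcal{M}/m\le 2e^{CL}$, the observation that $\Omega$ must reach $\partial B_1$) are fine, but they only translate the theorem into the geometric statement $L\le C_1'\beta+C_2'$ for the quasi-hyperbolic distance $L$, and that statement is never proved. Your final step --- harmonic measure of the outlet $\Gamma$, a dichotomy ``$\Gamma$ short or $\Omega$ thin near it'', and then ``feeding this into the classical bound on the number of nodal domains crossing a circle should yield $L\le C_1'\beta+C_2'$'' --- is precisely the content of Eremenko's theorem and is left as a hope rather than an argument: there is no indication of how a count of nodal domains crossing a circle would control a quasi-hyperbolic \emph{length}, nor why the resulting bound would be linear in $\beta$ with no superfluous factor (removing exactly such a factor is the point of the theorem). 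Moreover, since boundary Harnack estimates make the chain bound $\beta_{3/4}(h;B_1)\le CL+\log 2$ essentially sharp for a positive harmonic function vanishing on $\partial\Omega\cap B_1$, the statement $L\lesssim\beta_{1/2}(\varphi;B_1)$ you reduce to is at least as strong as the theorem itself; the reduction is circular rather than simplifying.

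The paper's actual proof is different and self-contained: it works with the gradient rather than with the geometry of $\Omega$. Normalizing $M_\varphi(1)=1$, the logarithmic convexity of $r\mapsto M_h(r)$ gives $M_h'(t)\le 20\,e^{-\beta_{3/4}(h;B_1)}$ for $1/2\le t\le 2/3$, and at the maximum points $z_t$ of $h$ on $|z|=t$ this bounds $|\nabla\varphi(z_t)|$ from above. The function $v=\log|\nabla\varphi|$ is subharmonic (a genuinely two-dimensional fact); it is $\le -\beta_{3/4}(h;B_1)+C$ on the curve $\gamma=\{z_t\}$ joining the circles $|z|=1/2$ and $|z|=2/3$, it is $\le C$ on $|z|=2/3$, and it is $\ge -\beta_{1/2}(\varphi;B_1)+C$ at some $z_0\in B_{1/2}$. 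The two-constants theorem together with the Beurling--Nevanlinna projection theorem (which bounds the harmonic measure of $\gamma$ at $z_0$ below by an absolute constant) then yields the linear inequality directly. As written, your proposal is missing its main step; to salvage your route you would need an actual proof of the neck-length estimate, which is not supplied.
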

\begin{remark}
Nadirashvili's-Hadamard's Theorem is equivalent to a related inequality with
 $C_1=C_1(\Omega)$. Namely,
\begin{equation}
\label{nad-had-equiv}
\frac{\beta_r(h; B_1)}{\log(1/(|\Omega\cap B_r|/|B_1|)^{1/n})}
\leq C_1 \frac{\beta_r(\vphi; B_1)}{\log1/(C_2 r)}\ .
\end{equation}
The important point in Theorem~\ref{thm:eremenko} is that by
allowing a slightly bigger $r$ in the nominator of the LHS of~(\ref{nad-had-equiv}) 
we obtain a constant
$C_1$ which is independent of $\Omega$.
\end{remark}
\begin{proof}[Proof of Theorem~\ref{thm:eremenko}]
The proof is based on the harmonic majorant principle and on
the Beurling-Nevannlina projection theorem.
For a function $u$ defined in $B_1$, 
let $M_u(r):=\max_{|z|\leq r} |u|$. We normalize $\vphi$ 
so that $M_{\vphi}(1)=1$.
It is well known (\cite{hor-convexity}) 
that $M_h(r)$ is a convex function of $\log r$,
that is, $t M_h'(t)$ is a monotonically increasing function of $t$.
Thus, we have for all $1/2\leq t\leq2/3$

\begin{multline*}
  \me^{-\beta_{3/4}(h; B_1)}\geq M_h(3/4)-M_h(t) = \int_{t}^{3/4} 
\frac{rM_h'(r)}{r}\,\dif r \geq \\
tM_h'(t)4(3/4-t)/3  \geq tM_h'(t) / 9 \geq M_h'(t)/18 \ .
\end{multline*}
Hence, $M_h'(t) < 20 \me^{-\beta_{3/4}(h; B_1)}$ for all $1/2\leq t\leq 2/3$.

Now, let $z_t$ be a point where $h(z_t)=M_h(t)$. Then,
$$0\leq \frac{\partial h}{\partial r}(z_t)\leq M_h'(t),\quad
\frac{\partial h}{\partial\theta}(z_t)=0\ .$$

Thus, we get that on $z_t$
\begin{equation*}
|\nabla \vphi(z_t)|\leq 20\me^{-\beta_{3/4}(h; B_1)}\ .
\end{equation*}
The function $v:= \log|\nabla \vphi|$ is subharmonic in $B_1$,
and on $z_t$, 
\begin{equation}
\label{v-zt}
v(z_t)\leq -\beta_{3/4}(h; B_1) +C_1\ .
\end{equation}
Since on $|z|\leq 1$, $|\vphi|\leq 1$,
on the circle $|z| = 2/3$, $|\nabla\vphi| \leq C_2$. So,
\begin{equation}
\label{v-23}
v|_{|z|=2/3} \leq C_3\ .
\end{equation}
Now we apply the harmonic majorant principle:
Let $\omega(z, \gamma, B_{2/3})$ be the harmonic measure of
$\gamma =\{z_t : 1/2\leq t\leq 2/3\}$ with respect
to $B_{2/3}$.
$\omega$ is a harmonic function on $B_{2/3}\setminus\gamma$, 
which tends to $1$ on $z_t$
and to $0$ on $|z|=2/3$.
Thus, in light of~(\ref{v-zt}) and~(\ref{v-23}), we get
\begin{equation}
\label{majorant}
v\leq (-\beta_{3/4}(h; B_1)+C_1) \omega + C_3 \leq 
-\beta_{3/4}(h; B_1)\omega +C_4\ .
\end{equation}
Also, since $M_\vphi(1/2)\geq \me^{-\beta_{1/2}(\vphi; B_1)}$,
there exists a point $z_0$, $|z_0|\leq 1/2$, where 
\begin{equation}
\label{gradient}
v(z_0) \geq
-\beta_{1/2}(\vphi; B_1) + C_5\ .
\end{equation}
By the Beurling-Nevannlina Projection Theorem (\cite{ahlfors-conf-inv})
\begin{equation}
\label{beurling-nevannlina}
\omega(z_0, \gamma, D) \geq \omega(-|z_0|; [1/2, 2/3], B_{2/3}) = C_6 \ .
\end{equation}

Combining (\ref{beurling-nevannlina}) with~(\ref{majorant}) 
and~(\ref{gradient}),
we obtain

$$-\beta_{1/2}(\vphi; B_1)+ C_5\leq v(z_0)
\leq -C_6 \beta_{3/4}(h; B_1) + C_4\ .$$
\end{proof}
\begin{proof}[Proof of Theorem~\ref{thm:dim2-harmonic}]
By Theorem~\ref{thm:rapid-growth-1st-version}
$$\beta_{3/4}(h; B_1) \geq C/(|\Omega|/|B_1|)\ .$$
On the other hand, Theorem~\ref{thm:eremenko} tells us
$$\beta_{3/4}(h; B_1) \leq C_1\beta_{1/2}(\vphi; B_1)+C_2\ .$$
Combining these two inequalities we get
$$|\Omega|/|B_1| \geq C_1/(\beta_{1/2}(\vphi; B_1) + C_2)
\geq C_3/\beta_{1/2}'(\vphi; B_1)\ .$$
\end{proof}

\subsection{Proof of Theorem~\ref{thm:local-courant-dim-2}}
The technique developed in~\cite{NazPolSod05}
shows that one can use a quasiconformal mapping in order
to pass to estimates for harmonic functions.

From Section~\ref{sec:rescaling} and by using conformal coordinates
we see that
Theorem~\ref{thm:local-courant-dim-2} is equivalent
for small balls to

\begin{thm}
\label{thm:quasiconformal-transfer}
Let $\Delta\vphi - \eps q \vphi = 0$ in $B_1\subseteq\CC$.
Suppose $\vphi(0)>0$ and let $\Omega\subseteq B_1$ be the connected
component of $\{\vphi>0\}$ which contains $0$. Then,
$$\frac{|\Omega|}{|B_1|} \geq
\frac{C_3}{\beta_{1/2}'(\vphi)
(\log\beta_{1/2}'(\vphi))^{1/2}} \ .$$
\end{thm}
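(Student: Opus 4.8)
The plan is to reduce Theorem~\ref{thm:quasiconformal-transfer} to the harmonic estimate in Theorem~\ref{thm:dim2-harmonic} by absorbing the potential term $\eps q\vphi$ into a quasiconformal change of variables, following the scheme of~\cite{NazPolSod05}. First I would factor the solution $\vphi$ of $\Delta\vphi-\eps q\vphi=0$ in $B_1\subseteq\CC$ as $\vphi = e^{w}\cdot u$, where $w$ is a suitable real solution of an auxiliary equation chosen so that $u$ satisfies a Beltrami-type (or divergence-form, modulus-one-coefficient) equation; more precisely, one writes $\vphi$ as the real part of a quasiregular function, or uses the standard representation that turns $L\vphi=0$ with small zeroth-order term into $\DIV(a\,\grad u)=0$ with $a$ uniformly close to the identity. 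Then a $K$-quasiconformal homeomorphism $\Psi\colon B_1\to\Psi(B_1)$ with $K = K(\eps_0)\to 1$ as $\eps_0\to 0$ straightens this to genuine harmonicity: $h:=u\circ\Psi^{-1}$ is harmonic on $\Psi(B_1)$, and after an inner/outer radius normalization we may assume $B_{1/2}'\subseteq\Psi(B_1)\subseteq B_{1}'$ for controlled concentric balls, and $0\in\Psi(B_1)$ with $h(0)>0$.

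Next I would transfer the three relevant quantities through $\Psi$. The component $\Omega$ of $\{\vphi>0\}$ containing $0$ is carried to the component $\Omega'$ of $\{h>0\}$ containing $0$ (since $e^w>0$ and quasiconformal maps preserve connectedness and sign), and by the quasiconformal distortion of area, $|\Omega'|/|B_1'| \leq C\,(|\Omega|/|B_1|)^{1/K'}$ for some $K'$ depending on $K$ — i.e. up to a fixed power and a fixed constant. Likewise the growth exponent is comparable: $\beta_{1/2}'(h; B_1') \leq C_1\,\beta_{\rho}'(\vphi; B_1) + C_2$ for a fixed $\rho<1$, because $\Psi$ and $\Psi^{-1}$ distort concentric balls by bounded factors and $|e^{w}|$ is bounded above and below on $B_1$ (so multiplying $u$ by $e^w$ changes $\sup$'s only by fixed constants), and one passes from the $\beta_\rho$ of $\vphi$ to $\beta_{1/2}$ of $\vphi$ — or directly of $h$ — using Hadamard convexity of $\log M(r)$ exactly as in the proof that Theorem~\ref{thm:local-courant-analytic}'' implies Theorem~\ref{thm:local-courant-analytic}'. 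Apply Theorem~\ref{thm:dim2-harmonic} to $h$ on $B_1'$ (after rescaling the slightly distorted domain to a true unit ball, which only affects constants) to get $|\Omega'|/|B_1'| \geq C/\beta_{1/2}'(h; B_1')$. Combining with the two transfer inequalities yields
\begin{equation*}
\frac{|\Omega|}{|B_1|} \geq \left(\frac{C}{|\Omega'|/|B_1'|}\right)^{-K'} \cdot (\text{const}) \geq \frac{C'}{\left(\beta_{1/2}'(h;B_1')\right)^{K'}} \geq \frac{C''}{\left(\beta_{1/2}'(\vphi)\right)^{K'}}\ ,
\end{equation*}
which is weaker than claimed by the exponent $K'>1$. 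To recover the sharp linear dependence on $\beta_{1/2}'$ with only a $(\log\beta_{1/2}')^{1/2}$ loss, I would not fix $\eps_0$ but instead run the argument with $\eps_0$ (hence $K = K(\eps_0) = 1 + O(\eps_0)$) as a free small parameter, so that $K' = 1 + O(\eps_0)$, and then optimize over the choice: taking $\eps_0 \sim 1/\log\beta_{1/2}'(\vphi)$ turns the exponent loss $(\beta')^{K'-1} = \exp((K'-1)\log\beta')$ into a bounded-power-of-logarithm factor, exactly as in the computation $\sup_\eps(\eps/A)^{1/(1-\eps)}\geq C/(A\log A)$ performed for Theorem~\ref{thm:local-courant-analytic}'. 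Balancing the two error contributions (the distortion in area, which costs a factor $\exp(O(\eps_0)\log\beta')$, against a term that degrades as $\eps_0\to0$) should produce the exponent $1/2$ on the logarithm; I would verify that the relevant constant in the quasiconformal area distortion and in the $\beta$-comparison is linear in $\eps_0$, so the optimization is the same one-variable calculus problem.

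The main obstacle I expect is making the $\eps_0$-dependence of the quasiconformal transfer completely quantitative and uniform: one needs that the dilatation $K$ of $\Psi$, the multiplicative constants in $|\Omega'|/|B_1'|\lesssim (|\Omega|/|B_1|)^{1/K}$, the bounds on $e^{\pm w}$, and the $\beta$-comparison constants all degrade no worse than linearly (or at least controllably) in $\eps_0$ as $\eps_0\to0$, including the effect of the slight non-roundness of $\Psi(B_1)$ and the rescaling back to a unit ball. The construction of $\Psi$ and the factorization $\vphi=e^w u$ are standard (this is precisely the mechanism of~\cite{NazPolSod05}), but tracking the constants through the normalization of inner and outer radii — and checking that one may legitimately apply Theorem~\ref{thm:dim2-harmonic}, which is stated on $B_1$, to a harmonic function on a merely comparable domain — requires care. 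A secondary point is to confirm that passing from $\beta_{\rho_0}'$ (which is what one naturally controls after transfer, for $\rho_0$ depending on $K$) to $\beta_{1/2}'$ costs only a bounded factor uniformly in $\eps_0$; this follows from Hadamard three-circles but one should check the dependence of $\rho_0$ on $\eps_0$ does not reintroduce an uncontrolled logarithmic loss.
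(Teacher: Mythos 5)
Your overall strategy is the one the paper intends: its own ``proof'' of Theorem~\ref{thm:quasiconformal-transfer} is a one-line deferral to the quasiconformal reduction of \cite{NazPolSod05}, i.e.\ factor $\vphi=\me^{w}\,(h\circ\Psi)$ with $h$ harmonic and $\Psi$ quasiconformal of dilatation $1+O(\eps)$, transfer the growth exponent and the area (via quasiconformal area distortion) and invoke Theorem~\ref{thm:dim2-harmonic}. Up to that point your sketch is faithful, and you correctly diagnose that the naive transfer yields only $|\Omega|/|B_1|\gtrsim(\beta')^{-K}$ with $K=1+O(\eps)$, i.e.\ a multiplicative loss $(\beta')^{O(\eps)}=\me^{O(\eps)\log\beta'}$ rather than the claimed $(\log\beta')^{1/2}$.

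The gap is in how you propose to remove that loss. You ``optimize over $\eps_0$'', but $\eps$ is a datum of the equation $\Delta\vphi-\eps q\vphi=0$, not a parameter of the proof; you cannot shrink it. Tellingly, you never identify the ``term that degrades as $\eps_0\to 0$'' against which you claim to balance --- in your formulation there is no such term, so the optimization is vacuous (were $\eps_0$ genuinely free you would send it to $0$ and lose nothing). The legitimate free parameter is the \emph{scale}: restricting to a ball $B'$ of radius $\rho$ and rescaling turns the coefficient into $\eps\rho^2$, hence a dilatation $1+O(\eps\rho^2)$ and a loss $\me^{O(\rho^2)\log\beta'}$; note that $\beta_{1/2}'(\vphi;B')\le\beta_{1/2}'(\vphi)$ by the sup in definition~(\ref{def:beta}), so the growth input survives the localization. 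The price is that Theorem~\ref{thm:dim2-harmonic} now produces area only inside $B'$. Since $\Omega$ must reach $\partial B_1$ (maximum principle, as in the proof of Theorem~\ref{thm:rapid-growth-1st-version}), one chains $\sim 1/\rho$ disjoint such balls along $\Omega$ exactly as in Section~\ref{sec:bigballs}, obtaining $|\Omega|\gtrsim \rho^{-1}\cdot\rho^{2}\cdot(\beta')^{-1}\me^{-C\rho^2\log\beta'}$, and the choice $\rho\sim(\log\beta')^{-1/2}$ gives precisely the stated bound. Without the chaining, a single sub-ball yields only $\rho^2/\beta'\sim 1/(\beta'\log\beta')$, a full logarithm; both the rescaling to radius $\rho$ and the chain are needed, and neither appears in your write-up.
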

\begin{proof}
The theorem follows from Theorem~\ref{thm:dim2-harmonic}
in exactly the same way which is explained in~\cite{NazPolSod05}.
So, we omit the proof.
\end{proof}
Then, for large balls we proceed in the same way as in 
Section~\ref{sec:bigballs}.
\section{Examples}
\label{sec:examples}
\subsection{An Example on $\Sb^n$}
\label{sec:example-sn}
In this section we show that Theorem~\ref{thm:local-courant-analytic}
for balls of radius $R<1/\sqrt{\lambda}$
is sharp up to the $(\log\lambda)^{n-1}$ factor.
The example we give will be a sequence of
spherical harmonics on the standard sphere $\Sb^n$.
Let us denote by $\Hl^n_k$ the
space of spherical harmonics on $\Sb^n$ of degree $k$.
\begin{prop}
\label{prop:max-nodal-domains}
There exists a sequence
 $(Y_k^n)_{k\geq 1} \in \Hl^n_k$
with the following properties:
\begin{enumerate}
\item The number of nodal domains of $Y_k^n$ is
$\geq c_{1,n} k^n$.
\item \label{prop:2} There exist $\geq c_{2,n} k^{n-1}$ nodal
domains of $Y_k^n$ which have the north pole on their boundary.
\end{enumerate}
\end{prop}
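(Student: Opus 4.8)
The plan is to construct $Y_k^n$ explicitly as a product-type spherical harmonic built from a single-variable oscillating factor times an "equatorial" harmonic of one lower dimension, so that the nodal set becomes a union of parallels and of the nodal set of the lower-dimensional piece — a configuration whose nodal domains one can count by hand. Concretely, write $\Sb^n$ with coordinates coming from the join decomposition $\Sb^n = \Sb^{n-1} * \Sb^0$, i.e.\ parametrize points by $(\theta, \omega)$ with $\theta\in[0,\pi]$ the polar angle from the north pole and $\omega\in\Sb^{n-1}$. One seeks $Y_k^n(\theta,\omega) = f(\theta)\, Z(\omega)$ where $Z\in\Hl^{n-1}_m$ is a spherical harmonic on $\Sb^{n-1}$ with many nodal domains (obtained inductively, or simply taken to be a Gegenbauer-type harmonic depending on one further angular variable), and $f$ is, up to the usual $\sin^m\theta$ factor, an appropriate Jacobi/Gegenbauer polynomial in $\cos\theta$ of degree $k-m$, chosen so that $f(\theta)Z(\omega)$ is a genuine degree-$k$ spherical harmonic on $\Sb^n$. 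The separation-of-variables computation for $\Delta_{\Sb^n}$ in these coordinates forces $f$ to solve an ODE whose polynomial solutions are exactly the classical ones, so this is standard; the only care needed is bookkeeping of degrees so that $\deg Y_k^n = k$ exactly.

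The key steps, in order: (1) fix the inductive hypothesis that on $\Sb^{n-1}$ there is $Z = Y_{m}^{n-1}\in\Hl^{n-1}_m$ with $\geq c_{1,n-1} m^{n-1}$ nodal domains, the base case $\Sb^1$ being $\cos(m\phi)$ with $2m$ nodal arcs; (2) choose $m \sim k/2$ (any fixed fraction of $k$ works) and let $f(\theta) = (\sin\theta)^m P(\cos\theta)$ with $P$ the degree-$(k-m)$ Gegenbauer polynomial making $fZ$ harmonic of degree $k$, so $P$ has $k-m \sim k/2$ simple zeros in $(-1,1)$, i.e.\ $f$ has $\sim k/2$ sign-changing zeros in $(0,\pi)$; (3) observe that $\{Y_k^n = 0\} = \{f(\theta)=0\}\cup(\Sb^0 \text{-axis})\cup\{\omega: Z(\omega)=0\}$, so the nodal set is a union of $\sim k/2$ parallel $\Sb^{n-1}$'s together with the cylinder over the nodal set of $Z$; hence each of the $\sim k/2$ "latitude bands" is subdivided into $\geq c_{1,n-1}m^{n-1} \sim c k^{n-1}$ pieces, giving $\geq c_{1,n} k^n$ nodal domains in total, which is (1); (4) for (2), note that near the north pole $\theta = 0$ the factor $(\sin\theta)^m$ does not vanish to a sign-changing order (the nodal structure near the pole is governed by $P(\cos\theta)$ and by $Z(\omega)$ viewed in a chart), and the band $0 < \theta < \theta_1$ (first zero of $f$) is cut by the cylinder over $\{Z=0\}$ into $\geq c_{1,n-1}m^{n-1} \sim c_{2,n}k^{n-1}$ nodal domains, each of which accumulates at the pole; one verifies that the pole lies on the closure of each such component because the chart picture there is exactly a lower-dimensional nodal configuration scaled into a small ball around the origin.

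I expect the main obstacle to be step (4): making rigorous that the north pole is genuinely on the \emph{boundary} of $c_{2,n}k^{n-1}$ distinct components, rather than, say, lying on the nodal set in a degenerate way that merges the local sectors or isolates the pole. The clean way around this is to choose the inductive harmonic $Z = Y_m^{n-1}$ so that it is \emph{not} divisible by any power of a coordinate vanishing at the relevant point — e.g.\ arrange that in exponential coordinates on $\Sb^n$ centered at the north pole, $Y_k^n$ has leading homogeneous part a harmonic polynomial on $\Rb^n$ whose zero set is the cone over the $(n-2)$-dimensional nodal set of (the equatorial part of) $Z$, so that the germ at the pole is that of a homogeneous harmonic polynomial with $\sim k^{n-1}$ nodal sectors — and homogeneous harmonic polynomials have the origin on the closure of each nodal component by scaling. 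Granting the inductive construction of $Y_m^{n-1}$ with this extra non-divisibility property (which one carries along as part of the induction), the counting in (1)–(3) is routine, and (4) follows from the homogeneous-polynomial picture at the pole.
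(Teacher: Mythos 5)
Your construction is essentially identical to the paper's: the paper also proceeds by induction on $n$, setting $Y^n_k := E^{n+1}_{k,\lfloor k/2\rfloor}(\cos\theta_1)\, Y^{n-1}_{\lfloor k/2\rfloor}(\theta_2,\ldots,\varphi)$, where the associated Legendre factor $E^{n+1}_{k,j}(t)=(1-t^2)^{j/2}\partial_t^j P^{n+1}_k(t)$ is exactly your $(\sin\theta)^m P(\cos\theta)$, and then counts $(k-j)$ latitude bands each cut into $\gtrsim j^{n-1}$ pieces. Your step (4) — identifying the germ at the pole with the degree-$j$ homogeneous harmonic polynomial extending $Y^{n-1}_j$, so that each of its $\gtrsim j^{n-1}$ conical sectors has the pole on its boundary — correctly fills in a detail the paper only asserts; the argument is sound.
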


\begin{cor}
For every eigenvalue $\lambda$ and $r<1/\sqrt{\lambda}$
there exists an eigenfunction $\vphi_\lambda$,
a nodal domain $\Al_\lambda$ and a ball $B$ of radius $r$
such that
$$\frac{\Vol(\Al_\lambda\cap B)}{\Vol(B)} \leq
\frac{C_3(n)}{(\sqrt{\lambda})^{n-1}}\ ,$$
and $\Al_\lambda\cap \Bhf\neq\emptyset$.
\end{cor}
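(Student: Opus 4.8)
The statement to prove is the Corollary deducing, from Proposition~\ref{prop:max-nodal-domains}, that on $\Sb^n$ there is an eigenfunction with a nodal domain occupying a proportionally tiny piece of a small ball touching the ball's half. The idea is a pigeonhole/counting argument on the many nodal domains produced by the Proposition. First I would fix an eigenvalue $\lambda$ of $\Sb^n$; it corresponds to a degree $k$ with $\lambda = k(k+n-1) \sim k^2$, so $\sqrt\lambda \sim k$. Take the spherical harmonic $Y_k^n \in \Hl^n_k$ supplied by the Proposition, and set $\vphi_\lambda = Y_k^n$.

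Concretely, set $\vphi_\lambda := Y_k^n$ and let $N\in\Sb^n$ be the north pole. Given $r<1/\sqrt\lambda$, I would take $B := B(N,r)$, so that $\Bhf = B(N,r/2)$. By part~(\ref{prop:2}) of Proposition~\ref{prop:max-nodal-domains} there are at least $m := c_{2,n}k^{n-1}$ nodal domains $\Al_1,\dots,\Al_m$ of $\vphi_\lambda$ whose closures contain $N$. Since $N\in\bdry\Al_i$, the open set $\Al_i$ accumulates at $N$ and hence meets every ball centered at $N$; in particular $\Al_i\cap\Bhf\neq\emptyset$ for every $i$. Distinct nodal domains are pairwise disjoint, so the sets $\Al_i\cap B$ are pairwise disjoint subsets of $B$ and therefore $\sum_{i=1}^m |\Al_i\cap B|\leq |B|$. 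By pigeonhole some index $j$ satisfies $|\Al_j\cap B|\leq |B|/m$. Taking $\Al_\lambda := \Al_j$ then gives $\Al_\lambda\cap\Bhf\neq\emptyset$ and, using $\sqrt\lambda = \sqrt{k(k+n-1)}\leq C(n)\,k$,
$$\frac{|\Al_\lambda\cap B|}{|B|} \leq \frac{1}{m} = \frac{1}{c_{2,n}k^{n-1}} \leq \frac{C_3(n)}{(\sqrt\lambda)^{n-1}}\ ,$$
which is exactly the asserted bound.

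The deduction itself is elementary: the only points to check carefully are that $N\in\bdry\Al_i$ forces $\Al_i\cap\Bhf\neq\emptyset$ (immediate, since a nodal domain is open and its boundary point is a limit of interior points), and that the comparability $k\sim\sqrt\lambda$ holds with a constant depending only on $n$ (clear from $\lambda=k(k+n-1)$ on $\Sb^n$). All the genuine work has been pushed into Proposition~\ref{prop:max-nodal-domains}, namely the construction of a spherical harmonic of degree $k$ with $\gtrsim k^{n-1}$ nodal domains clustering at a single point; that is the step I expect to be the real obstacle, and it is where a concrete choice of $Y_k^n$ (e.g. a suitable product of one-variable eigenfunctions in spherical coordinates, or a Gegenbauer-type construction) must be exhibited and its nodal pattern near $N$ analyzed.
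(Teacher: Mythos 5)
Your proposal is correct and is essentially the paper's own argument: take $Y_k^n$ from Proposition~\ref{prop:max-nodal-domains}, center the ball at the north pole, and use the $\gtrsim k^{n-1}$ nodal domains touching the pole together with $\lambda\sim k^2$. The paper's proof is terser and leaves the pigeonhole step and the observation that a boundary point of an open nodal domain forces intersection with $\frac{1}{2}B$ implicit; you have simply spelled these out.
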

\begin{proof}
$\lambda = k(k+n-1)$ for some integer $k\geq 0$.
Let $Y^n_k$ be as in Proposition~\ref{prop:max-nodal-domains}.
Let $B$ be a ball of radius $r<1/k$ centered at the north pole.
By Proposition~\ref{prop:max-nodal-domains}
there exists a nodal domain $\Al_\lambda$
for which
$$\frac{\Vol(\Al_\lambda\cap B)}{\Vol(B)} \leq
\frac{C_4(n)}{k^{n-1}}\ .$$
The result follows since $\lambda\sim k^2$.
%
%
\end{proof}

We now prove Proposition~\ref{prop:max-nodal-domains}.
First, we introduce spherical coordinates and we
review elementary facts about spherical harmonics.
\begin{lemma}
A point on the sphere $\Sb^n$ is parametrized by
$(\theta_1, \ldots, \theta_{n-1}, \vphi)$,
where $0<\theta_l<\pi$, $0\leq \vphi\leq 2\pi$,
and
$$
\begin{array}{lcl}
x_1 &=& \cos \theta_1\ ,\\
\vdots & & \\
x_{n-1} &=& \sin\theta_1\ldots\sin\theta_{n-2}\cos\theta_{n-1} \ ,\\
x_{n} &=& \sin\theta_1\ldots\sin\theta_{n-1}\cos\vphi\ , \\
x_{n+1} &=& \sin\theta_1\ldots\sin\theta_{n-1}\sin\vphi\ .
\end{array}
$$
\end{lemma}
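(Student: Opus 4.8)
The plan is to argue by induction on $n$, peeling off one angular coordinate at a time. For $n=1$ the assertion reduces to the standard parametrization of the unit circle $\Sb^1\subseteq\Rb^2$ by $(x_1,x_2)=(\cos\vphi,\sin\vphi)$, $0\leq\vphi\leq2\pi$ (here the list of $\theta_l$'s is empty, so the two bottom lines with empty product of sines are exactly this). For the inductive step, I would realize $\Sb^n\subseteq\Rb^{n+1}$ via the decomposition of a point $x=(x_1,\ldots,x_{n+1})\in\Sb^n$ as $x=(\cos\theta_1,\ (\sin\theta_1)\,z)$ with $\theta_1:=\arccos x_1\in[0,\pi]$ and, when $\theta_1\in(0,\pi)$, $z:=(x_2,\ldots,x_{n+1})/\sin\theta_1\in\Sb^{n-1}$ (this uses $\sum_{i\geq2}x_i^2=1-x_1^2=\sin^2\theta_1$). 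Applying the inductive hypothesis to $z$ and relabelling its angles as $(\theta_2,\ldots,\theta_{n-1},\vphi)$, the formulas for $z$ multiplied by $\sin\theta_1$ become precisely the displayed formulas for $x_2,\ldots,x_{n+1}$, while $x_1=\cos\theta_1$ is the first line. This shows the map $(\theta_1,\ldots,\theta_{n-1},\vphi)\mapsto x$ is onto $\Sb^n$.

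To confirm the image lies on $\Sb^n$, I would compute $\sum_i x_i^2$ from the bottom up using $\sin^2+\cos^2=1$: $x_n^2+x_{n+1}^2=\sin^2\theta_1\cdots\sin^2\theta_{n-1}$, then adding $x_{n-1}^2$ collapses one more sine, and so on, until $x_1^2=\cos^2\theta_1$ brings the total to $1$. Injectivity on the open region $\{0<\theta_l<\pi,\ 0<\vphi<2\pi\}$ follows because $\theta_1$ is read off as $\arccos x_1$ and then the remaining angles from $z$ by the inductive injectivity; the same induction gives that the Jacobian there equals, up to sign, $\sin^{n-1}\theta_1\sin^{n-2}\theta_2\cdots\sin\theta_{n-1}\neq0$, so the parametrization is a diffeomorphism onto an open set whose complement (where some $\theta_l\in\{0,\pi\}$) is a finite union of lower-dimensional pieces, hence of measure zero.

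I do not anticipate a real obstacle here; the only things requiring care are the bookkeeping of the angle ranges so that the chart covers $\Sb^n$ up to a null set, and the observation that the coordinate singularities at $\theta_l\in\{0,\pi\}$ are irrelevant for the way this lemma is used in Section~\ref{sec:example-sn} — there one only integrates against the round volume form and locates nodal sets, both of which are insensitive to a measure-zero set. For convenience I would record at the same time the volume element $d\vol_{\Sb^n}=\sin^{n-1}\theta_1\sin^{n-2}\theta_2\cdots\sin\theta_{n-1}\,d\theta_1\cdots d\theta_{n-1}\,d\vphi$, which falls out of the Jacobian computation above.
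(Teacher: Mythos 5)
Your proof is correct; the inductive peeling $x=(\cos\theta_1,(\sin\theta_1)z)$ with $z\in\Sb^{n-1}$, together with the telescoping sum-of-squares check and the measure-zero caveat at $\theta_l\in\{0,\pi\}$, is the standard argument. The paper states this lemma without proof, as part of a review of elementary facts about spherical coordinates, so there is nothing to compare against; your write-up (including the volume element $\sin^{n-1}\theta_1\cdots\sin\theta_{n-1}\,d\theta_1\cdots d\theta_{n-1}\,d\vphi$) supplies exactly the routine verification the paper takes for granted.
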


We recall the definition of the zonal spherical harmonics and
Legendre Polynomials. Details can be found in chapter~$3$ of~\cite{groemer}.
Consider the natural action of the orthogonal group
$O(n+1)$ on $\Sb^n$. It induces a representation of $O(n+1)$ on $\Hl_k^n$.
The \emph{zonal} spherical harmonic $Z^n_{k, p}$
of degree $k$ with pole $p\in \Sb^n$
is defined as the unique spherical harmonic in $\Hl_k^n$, which
is fixed by the stabilizer of the point $p$ in $O(n+1)$, and
admits the value~$1$ at~$p$.
The Legendre polynomial $P^{n+1}_k(t)$
is defined to be the polynomial on $[-1, 1]$, for which
$$Z^{n}_{k, p_0}(\theta_1, \ldots, \theta_{n-1}, \phi)
= P^{n+1}_k(\cos\theta_1)\ ,$$
where $p_0$ is the north pole.
It is easy to see that for any $p\in\Sb^n$
$$Z^n_{k, p}(x) = P^{n+1}_k(\langle p, x \rangle)\ .$$

\begin{lemma}[{\cite[Proposition 3.3.7]{groemer}}]
\label{lem:explicit-legendre}
$P^{n}_k$ is given by
   $$P^{n}_k(t)=\alpha_n (-1)^k (1-t^2)^{-(n-3)/2}
\frac{\partial^k}{\partial t^k} (1-t^2)^{k+\frac{n-3}{2}}\ ,$$
where $\alpha_n$ are some constants which depend on $n$.
\end{lemma}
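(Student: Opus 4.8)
The plan is to prove this Rodrigues-type formula by identifying both sides, up to a constant, with the unique polynomial solution of degree $k$ of one second-order ordinary differential equation. Recall that $P^n_k$ is the function on $[-1,1]$ attached to the sphere $\Sb^{n-1}$ through $Z^{n-1}_{k,p_0}(\theta_1,\ldots) = P^n_k(\cos\theta_1)$, where $Z := Z^{n-1}_{k,p_0}$ is the zonal harmonic of degree $k$ with pole $p_0$. The first step is to observe that a function on $\Sb^{n-1}$ fixed by the stabilizer of $p_0$ in $O(n)$ depends only on the polar angle $\theta_1$, so $Z = f(\cos\theta_1)$; since $Z$ is the restriction of a homogeneous harmonic polynomial of degree $k$ on $\Rb^n$, the function $f$ is a polynomial of degree $k$. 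Writing out the eigenvalue equation $\Delta_{\Sb^{n-1}} Z = k(k+n-2)Z$ — only the $\theta_1$-part of the spherical Laplacian acts on $Z$ — and substituting $t=\cos\theta_1$ produces the ultraspherical equation
\begin{equation}
\label{eq:ultra-legendre}
  (1-t^2)f'' - (n-1)\,t\,f' + k(k+n-2)\,f = 0 .
\end{equation}
A short power-series recursion ($(j+2)(j+1)a_{j+2} = (j-k)(j+k+n-2)\,a_j$) shows that, for each fixed $k$, the polynomial solutions of~(\ref{eq:ultra-legendre}) form a one-dimensional space; hence $P^n_k$ is characterized, up to a multiplicative constant, as this polynomial solution.

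The second step is to check that the right-hand side of the asserted formula solves the same equation. Put $a=(n-3)/2$ and $R_k(t) := (1-t^2)^{-a}\,\frac{\partial^k}{\partial t^k}(1-t^2)^{k+a}$. That $R_k$ is a polynomial of degree $k$ follows by an easy induction on $k$ from $\frac{\partial}{\partial t}(1-t^2)^s = -2s\,t\,(1-t^2)^{s-1}$, which gives $\frac{\partial^k}{\partial t^k}(1-t^2)^{k+a} = (1-t^2)^a p_k(t)$ with $p_k$ a polynomial of degree $k$. To see that $R_k$ satisfies~(\ref{eq:ultra-legendre}), I would start from the first-order identity $(1-t^2)w' + 2(k+a)\,t\,w = 0$ for $w := (1-t^2)^{k+a}$, differentiate it $k+1$ times with the Leibniz rule to obtain a second-order equation for $w^{(k)}$, and then substitute $w^{(k)} = (1-t^2)^a R_k$ and simplify; the exponent $a=(n-3)/2$ is arranged precisely so that, after this change of dependent variable, the coefficients collapse to those in~(\ref{eq:ultra-legendre}). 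Equivalently, $R_k$ is a constant multiple of the Gegenbauer polynomial $C_k^{((n-2)/2)}$, which solves~(\ref{eq:ultra-legendre}) by classical theory. Combining the two steps with the uniqueness from step one yields $P^n_k = \alpha_n (-1)^k R_k$, the factor $(-1)^k$ being split off so that the remaining constant matches the normalization of $P^n_k$ adopted in~\cite{groemer}; its value is then pinned down by evaluating both sides at $t=1$, using $P^n_k(1) = Z(p_0) = 1$.

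The one step I expect to require genuine care is the ODE verification in the second paragraph: carrying out the Leibniz differentiation and the substitution $w^{(k)} = (1-t^2)^a R_k$ correctly, and confirming that the coefficients of $R_k''$, $R_k'$ and $R_k$ emerge as $(1-t^2)$, $-(n-1)t$ and $k(k+n-2)$ respectively. Everything else — the identification of $P^n_k$ with a zonal harmonic, the separation of variables giving~(\ref{eq:ultra-legendre}), the one-dimensionality of the polynomial solution space, and the polynomiality of $R_k$ — is routine, and fixing the constant $\alpha_n$ is mere bookkeeping. Of course one may bypass the entire argument by invoking the standard Rodrigues formula for Gegenbauer/ultraspherical polynomials, which is the route taken in~\cite{groemer}.
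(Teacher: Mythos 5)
Your proof is correct, and it is essentially the standard argument: the paper itself does not prove this lemma but simply quotes it from Groemer (Proposition 3.3.7), and your route --- characterizing $P^n_k$ up to a constant as the unique degree-$k$ polynomial solution of the ultraspherical equation $(1-t^2)f''-(n-1)tf'+k(k+n-2)f=0$ obtained by separating variables on $\Sb^{n-1}$, and then verifying by Leibniz differentiation of $(1-t^2)w'+2(k+a)tw=0$ that the Rodrigues expression solves the same equation --- is exactly the classical derivation that the cited reference carries out. The only (harmless) discrepancy worth noting is in the statement itself rather than in your argument: the normalizing constant fixed by $P^n_k(1)=1$ necessarily depends on $k$ as well as on $n$, so the paper's ``$\alpha_n$'' should really be ``$\alpha_{n,k}$''; this is immaterial for the paper's application, which only uses the count of distinct zeros of the associated Legendre functions.
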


We define also the associated Legendre functions:
 $$E^{n}_{k, j}(t) = (1-t^2)^{j/2}(\partial_t^j P^{n}_k)(t)\ .$$

The next lemma is an inductive construction of spherical harmonics:
\begin{lemma}[{\cite[Lemma~3.5.3]{groemer}}]
\label{lem:standard-basis}
  Given $G\in\Hl^{n-1}_j$, let
  $$H(\theta_1, \ldots\theta_{n-1},\vphi):=
E^{n+1}_{k, j}(\cos\theta_1) G(\theta_2, \ldots \theta_{n-1}, \vphi)\ . $$
Then, $H\in\Hl^n_k$.
\end{lemma}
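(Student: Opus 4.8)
The plan is to verify the two defining features of a degree-$k$ spherical harmonic: that $H$ extends to a smooth function on $\Sb^n$, and that it is an eigenfunction of the spherical Laplace--Beltrami operator for the eigenvalue $k(k+n-1)$. This suffices, since it is classical (\cite[Ch.~3]{groemer}) that $\Hl^n_k$ is precisely the eigenspace of $\Delta_{\Sb^n}$ for the eigenvalue $k(k+n-1)$, and that any such eigenfunction is automatically the restriction of a homogeneous harmonic polynomial of degree $k$. One may also assume $0\le j\le k$, for if $j>k$ then $\partial_t^jP^{n+1}_k\equiv 0$, so $H\equiv 0$ and there is nothing to prove.

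First I would reduce the eigenvalue equation to an ordinary differential equation by separation of variables. In the spherical coordinates introduced above, the round metric on $\Sb^n$ is the warped product $d\theta_1^2+\sin^2\theta_1\,g_{\Sb^{n-1}}$, where $g_{\Sb^{n-1}}$ is the round metric in the coordinates $(\theta_2,\dots,\theta_{n-1},\vphi)$; hence
\[
  \Delta_{\Sb^n}=-\partial_{\theta_1}^2-(n-1)\cot\theta_1\,\partial_{\theta_1}+\frac{1}{\sin^2\theta_1}\,\Delta_{\Sb^{n-1}}\ .
\]
Writing $H=f(\theta_1)\,G$ with $f(\theta_1):=E^{n+1}_{k,j}(\cos\theta_1)=\sin^j\theta_1\,Q(\cos\theta_1)$, where $Q:=\partial_t^jP^{n+1}_k$, and using $\Delta_{\Sb^{n-1}}G=j(j+n-2)\,G$, the equation $\Delta_{\Sb^n}H=k(k+n-1)H$ becomes the ODE
\begin{equation*}
  -f''-(n-1)\cot\theta_1\,f'+\frac{j(j+n-2)}{\sin^2\theta_1}\,f=k(k+n-1)\,f\ . \tag{$\ast$}
\end{equation*}

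Next I would verify $(\ast)$. By Lemma~\ref{lem:explicit-legendre} (with $n$ replaced by $n+1$), $P:=P^{n+1}_k$ is, up to a normalizing constant, the Gegenbauer polynomial of index $(n-1)/2$, so it solves $(1-t^2)P''-n\,t\,P'+k(k+n-1)P=0$. Differentiating this identity $j$ times via Leibniz' rule shows that $Q=\partial_t^jP$ solves $(1-t^2)Q''-(2j+n)\,t\,Q'+\bigl(k(k+n-1)-j(j+n-1)\bigr)Q=0$; substituting $t=\cos\theta_1$ into $f=(1-t^2)^{j/2}Q(t)$ then yields $(\ast)$ after routine algebra (the case $j=0$ being immediate from the Gegenbauer equation itself). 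This shows $H$ solves the eigenvalue equation wherever the coordinate chart is regular.

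The hard part will be the smoothness of $H$ across the poles $\theta_1\in\{0,\pi\}$, where the chart degenerates; away from them $H$ is smooth, being the product of $f$, smooth in $\theta_1$, with $G$, a genuine smooth function on $\Sb^{n-1}$ (the apparent singularities at $\theta_l\in\{0,\pi\}$ for $l\ge2$ and at $\vphi\in\{0,2\pi\}$ are those of the $\Sb^{n-1}$-chart and are resolved by the regularity of $G$). To handle the pole $(1,0,\dots,0)$ I would pass to the Euclidean coordinates $y=(x_2,\dots,x_{n+1})\in\Rb^n$: on $\Sb^n$ one has $|y|=\sin\theta_1$ and $x_1=\cos\theta_1=\sqrt{1-|y|^2}$, which is smooth near $y=0$, while $y/|y|\in\Sb^{n-1}$ carries the coordinates $(\theta_2,\dots,\theta_{n-1},\vphi)$. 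Thus $H=\bigl(|y|^{\,j}G(y/|y|)\bigr)\cdot Q\bigl(\sqrt{1-|y|^2}\bigr)$, where the first factor is the homogeneous harmonic polynomial of degree $j$ in $y$ associated to $G\in\Hl^{n-1}_j$ (this is exactly where the hypothesis that $G$ is a spherical harmonic of the \emph{single} degree $j$, rather than an arbitrary smooth function, is used: $|y|^jG(y/|y|)$ is a polynomial only under that hypothesis, see~\cite[Ch.~3]{groemer}), and the second factor is a polynomial evaluated at a function smooth near $y=0$. Hence $H$ is smooth near this pole, and by symmetry near $(-1,0,\dots,0)$ as well. Combining smoothness with the eigenvalue equation gives $H\in\Hl^n_k$. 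The only genuine subtlety is the pole analysis just described; the remaining steps are bookkeeping with the warped-product Laplacian and the Legendre/Gegenbauer ODE.
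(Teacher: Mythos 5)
The paper does not prove this lemma; it is quoted verbatim from Groemer~\cite[Lemma~3.5.3]{groemer}, so there is no internal proof to compare against. Your argument is a correct, self-contained proof along the standard separation-of-variables lines, and it holds up to scrutiny. The three ingredients all check out. First, the warped-product form of the Laplacian, $\Delta_{\Sb^n}=-\partial_{\theta_1}^2-(n-1)\cot\theta_1\,\partial_{\theta_1}+\sin^{-2}\theta_1\,\Delta_{\Sb^{n-1}}$, together with $\Delta_{\Sb^{n-1}}G=j(j+n-2)G$, correctly reduces the eigenvalue equation to the ODE $(\ast)$. Second, the Rodrigues formula in Lemma~\ref{lem:explicit-legendre} does identify $P^{n+1}_k$ (up to normalization) with the Gegenbauer polynomial $C_k^{((n-1)/2)}$, whose ODE after $j$ differentiations by Leibniz gives exactly $(1-t^2)Q''-(2j+n)tQ'+\bigl(k(k+n-1)-j(j+n-1)\bigr)Q=0$; substituting $f=(1-t^2)^{j/2}Q$, $t=\cos\theta_1$ into $(\ast)$ and collecting terms does land precisely on that equation (the coefficient of $s^{j-2}Q$ collapses to $j(j+n-2)s^{j}Q$ via $1-c^2=s^2$, and the $Q'$ coefficient is $(2j+1)+(n-1)=2j+n$ as needed). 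Third, your pole analysis is the right way to address smoothness, and the key observation is spot on: $|y|^jG(y/|y|)$ is a genuine polynomial in $y$ precisely because $G$ is a pure degree-$j$ spherical harmonic, and $Q\circ\sqrt{1-|y|^2}$ is smooth for $|y|<1$.

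If anything, you could shorten the smoothness discussion by noting that the product $|y|^jG(y/|y|)\cdot Q(\sqrt{1-|y|^2})$ already exhibits $H$ near the pole as the restriction to $\Sb^n$ of a polynomial in $(x_1,y)$ (write $Q(x_1)$ as a polynomial in $x_1$ and observe that $|y|^jG(y/|y|)$ is a homogeneous polynomial in $y$), which both handles smoothness and makes it visible that $H$ is the restriction of a polynomial. Groemer's own proof is more algebraic (building $H$ as the restriction of an explicit homogeneous harmonic polynomial rather than passing through the eigenvalue characterization), but your route via the $k(k+n-1)$-eigenspace characterization of $\Hl^n_k$ is equally standard and equally rigorous.
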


\begin{proof}[Proof of Proposition~\ref{prop:max-nodal-domains}]
We prove it by induction on $n$.
For $n=1$, we take $Y_k^1(\vphi) = \sin k\vphi$.
Suppose the result is true for $n-1$.
Set
$$H^n_{k, j}(\theta_1,\ldots, \theta_{n-1},\vphi)
:= E^{n+1}_{k, j }(\cos\theta_1)
Y^{n-1}_{j}(\theta_2,\ldots, \theta_{n-1},\vphi)\ .$$

By Lemma~\ref{lem:standard-basis} $H^n_{k, j} \in \Hl^n_k$.
From Lemma~\ref{lem:explicit-legendre} one can see that
$E^{n+1}_{k, j}$ has exactly $k-j$ distinct zeroes in the interval $(-1, 1)$
it follows that
the number of nodal domains of $H^k_n$ is $\geq c_{1,n-1} (k-j) j^{n-1}$,
of which $c_{1,n-1}j^{n-1}$ touch the north pole.
We define
$Y^{n}_k := H^n_{k, \lfloor k/2\rfloor}$.
\end{proof}

%


\subsection{An Example on $\Tb^n$}
\label{sec:example-tn}
\begin{thm}
Consider the standard flat torus $\Tb^n$.
For every eigenvalue $\lambda$
there exists an eigenfunction $\vphi_\lambda$ on $\Tb^n$,
a nodal domain $\Al_\lambda$ and a ball $B$ of radius $\sim 1$
such that
$$\frac{\Vol(\Al_\lambda\cap B)}{\Vol(B)} \leq
\frac{C_3(n)}{(\sqrt{\lambda})^{n-1}}\ ,$$ where
$\Al_\lambda\not\subseteq B$ and
$\Al_\lambda\cap\Bhf\neq\emptyset$.
\end{thm}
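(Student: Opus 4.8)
The plan is to construct an explicit eigenfunction on $\Tb^n = (\Rb/2\pi\ZZ)^n$ that is a product of a one-dimensional, highly oscillatory factor in one coordinate and a mildly oscillatory factor in the remaining coordinates, so that one of its nodal domains is a thin slab — thin in the oscillatory direction but of diameter $\sim 1$ transversally. Concretely, for a given eigenvalue $\lambda$ write $\lambda = m^2 + \ell^2$ with $m$ large (of order $\sqrt\lambda$) and $\ell$ of order $1$ (for instance $\ell\in\{0,1\}$ depending on the arithmetic of $\lambda$; when $\lambda$ is not itself a sum with a large square, one passes to a nearby admissible eigenvalue, which only changes constants), and set
\begin{equation*}
\vphi_\lambda(x_1,\dots,x_n) := \sin(m x_1)\cdot\psi(x_2,\dots,x_{n-1})\cdot\cos(\ell x_n),
\end{equation*}
where $\psi$ is a fixed low-frequency eigenfunction on $\Tb^{n-2}$ (or simply $\psi\equiv 1$ if $\ell^2$ already accounts for the remaining part of $\lambda$). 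This is a genuine $\lambda$-eigenfunction. Its nodal set in the $x_1$-direction consists of the hyperplanes $x_1 = k\pi/m$, $k\in\ZZ$, which chop $\Tb^n$ into $\sim m$ slabs each of $x_1$-width $\pi/m \sim 1/\sqrt\lambda$; inside such a slab, away from the zeros of $\psi\cos(\ell x_n)$, $\vphi_\lambda$ has a constant sign, so a typical nodal domain $\Al_\lambda$ is (a connected piece of) one of these slabs, of volume $\lesssim 1/\sqrt\lambda$ and diameter $\sim 1$ in the transverse directions.

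Next I would pick the ball $B$. Take $B$ to be a ball of radius $\sim 1$ centered at a point $p$ lying on the nodal hyperplane $\{x_1 = k_0\pi/m\}$ bounding $\Al_\lambda$, chosen so that $p$ is well inside the region where $\psi\cos(\ell x_n)\neq 0$; then $\Al_\lambda\cap\frac12 B\neq\emptyset$ (the slab enters the half-ball, since the slab is adjacent to $p$ and has transverse extent $\sim 1$), and $\Al_\lambda\not\subseteq B$ because the slab wraps around the torus in the transverse directions while $B$ has radius $\sim1 < \mathrm{diam}(\Tb^n)$. The volume estimate is then immediate: $\Al_\lambda\cap B$ is contained in the intersection of $B$ with a slab of width $\pi/m$, so
\begin{equation*}
\frac{\Vol(\Al_\lambda\cap B)}{\Vol(B)} \leq \frac{C\cdot (1/m)\cdot 1^{\,n-1}}{C' \cdot 1^{\,n}} \leq \frac{C_3(n)}{m} \leq \frac{C_3(n)}{(\sqrt\lambda)^{\,n-1}}
\end{equation*}
only for $n=2$; for general $n$ the bound we actually get is $C_3(n)/\sqrt\lambda$, which is much stronger than $C_3(n)/(\sqrt\lambda)^{n-1}$ and a fortiori implies the claimed inequality. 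So the stated estimate is really the weak form of what the example gives, and no sharpness in the power is being claimed here beyond exhibiting that $2n-2$ in~(\ref{frm-large-balls}) is not optimal.

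The one genuine technical point — the main obstacle — is the arithmetic of representing (a comparable) eigenvalue $\lambda$ of $\Tb^n$ as $m^2 + (\text{small})$ with $m\sim\sqrt\lambda$: every eigenvalue of the standard flat torus is a sum of $n$ squares of integers, but I need one of those integers to be of order $\sqrt\lambda$. This is easy: if $\lambda = \sum a_i^2$ then $\max_i a_i^2 \geq \lambda/n$, so the largest component is automatically of size $\geq \sqrt{\lambda/n}$; relabel that coordinate as $x_1$, set $m = \max_i |a_i|$, and absorb the remaining $\sum_{i\neq}a_i^2 \leq (1-1/n)\lambda$ into $\psi$ (a low-frequency eigenfunction on $\Tb^{n-1}$, whose nodal geometry is fixed and $\lambda$-independent). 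Thus no approximation of $\lambda$ is needed at all, and the construction works verbatim for every eigenvalue. One then just checks, using that $\psi$'s nodal set is a fixed hypersurface, that a positive component of $\vphi_\lambda$ adjacent to a chosen bounding hyperplane really does meet $\frac12 B$ and really does escape $B$ — both clear by choosing the center $p$ of $B$ at a generic point of that hyperplane away from the (finitely many, $\lambda$-independent) nodal components of $\psi$.
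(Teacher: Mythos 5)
There is a genuine error in your proposal: you have the direction of the required inequality backwards. The theorem asks you to \emph{exhibit} a nodal domain whose share of the ball is \emph{at most} $C_3(n)/(\sqrt\lambda)^{n-1}$ --- the smaller the exhibited fraction, the stronger the example. Your slab construction, oscillating at frequency $m\sim\sqrt\lambda$ in a single coordinate, produces nodal domains occupying a fraction of order $1/\sqrt\lambda$ of the unit ball. For $n\geq 3$ one has $1/\sqrt\lambda \gg 1/(\sqrt\lambda)^{n-1}$, so the bound $\Vol(\Al_\lambda\cap B)/\Vol(B)\leq C/\sqrt\lambda$ is a strictly \emph{weaker} upper bound and does not imply the claimed one; moreover your slab genuinely has volume fraction comparable to $1/\sqrt\lambda$ from below as well, so no amount of extra care recovers the stated estimate from this example. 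Your remark that $C_3(n)/\sqrt\lambda$ is ``much stronger than $C_3(n)/(\sqrt\lambda)^{n-1}$ and a fortiori implies the claimed inequality'' is exactly the inversion: it would be correct if these were lower bounds, but here they are upper bounds on the size of the exhibited nodal piece. Your construction is correct only in the case $n=2$, where the two exponents coincide.

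The missing idea is that the eigenfunction must oscillate at high frequency in $n-1$ coordinates simultaneously, not just one, so that the nodal domains are thin \emph{tubes} rather than thin slabs. The paper takes $\vphi_k=\prod_{j=1}^{n-1}\sin 2\pi k x_j$ on $\Rb^n/\ZZ^n$, an eigenfunction with $\lambda_k=4(n-1)k^2\pi^2$; its nodal domains are products of small $(n-1)$-dimensional boxes of side $\sim 1/k$ with the full circle in the $x_n$-direction, hence have cross-sectional area $\lesssim k^{-(n-1)}\sim(\sqrt\lambda)^{-(n-1)}$ while still wrapping around the torus (so $\Al_\lambda\not\subseteq B$) and passing through the center of any unit ball one chooses. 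Intersecting such a tube with a ball of radius $1$ gives volume $\lesssim (\sqrt\lambda)^{-(n-1)}$, which is the claimed estimate. Your arithmetic discussion of writing $\lambda=\sum a_i^2$ with $\max_i|a_i|\gtrsim\sqrt{\lambda/n}$ is fine but irrelevant to the actual obstacle; note also that to get $n-1$ large frequencies one in practice restricts to a subsequence of eigenvalues such as $\lambda_k=4(n-1)\pi^2k^2$, as the paper implicitly does.
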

\begin{proof}
Let $\Tb^n = \Rb^n/\ZZ^n$ be the standard flat torus parametrized
by the standard coordinates $(x_1, x_2, \ldots x_n)$, where $0\leq
x_j<1$. Let $k$ be an integer, and let 
$\vphi_k = \Pi_{j=1}^{n-1}\sin 2\pi k x_j$. $\vphi_k$ is an
eigenfunction corresponding to the eigenvalue $\lambda_k=
4(n-1)k^2\pi^2$. Each nodal domain has cross sections in normal direction 
to the $x_n$-axis of area $<c/k^{n-1}$.
Hence, if we take a ball~$B\subseteq\Tb^n$ of
radius~$1$ and we let $\Al_\lambda$ be a nodal domain which
contains the center of the ball, we have
$$\frac{\Vol(\Al_\lambda\cap B)}{\Vol(B)}\leq
   \frac{C(n)}{(\sqrt{\lambda})^{n-1}}\ .$$
\end{proof}

\providecommand{\bysame}{\leavevmode\hbox to3em{\hrulefill}\thinspace}
\providecommand{\MR}{\relax\ifhmode\unskip\space\fi MR }
\providecommand{\MRhref}[2]{%
  \href{http://www.ams.org/mathscinet-getitem?mr=#1}{#2}
}
\providecommand{\href}[2]{#2}

\noindent Dan Mangoubi,\\ 
Einstein Institute of Mathematics,\\
Hebrew University, Givat Ram,\\
Jerusalem 91904,\\
Israel\\
%
\smallskip
\texttt{\small mangoubi@math.huji.ac.il}
\end{document}